\newcommand{\gobble}[1]{}
\newcommand{\blue}[1]{{#1}}  % removes blue labeling
\newcommand{\inv}{^{-1}}
\newcommand{\eps}{\epsilon}
\newcommand{\D}{\partial}
\newcommand{\Dt}{\partial_t}
\newcommand{\Dx}{\partial_x}
\newcommand{\calS}{{\mathcal S}}
\newcommand{\eep}{\eee^\eps}
\newcommand{\qqp}{\qqq^\eps}
\newcommand{\hide}[1]{}  %% HIDE: uncomment to hide stuff
\newcommand{\hsstar}{h_{\star\star}}
\newcommand{\Tsstar}{T_{\star\star}}
\newcommand{\Clim}{C_{\rm lim}}
\begin{document}

\title[Blow-up for a dispersionless shallow water system]
{Well-posedness and derivative blow-up for a dispersionless regularized shallow water system}
%      {Existence of Blow Up Phenomena}

\author{Jian-Guo Liu}
\address{
Department of Physics and Department of Mathematics\\
Duke University, Durham, NC 27708, USA}
\email{jliu@phy.duke.edu}
\author[R. L. Pego]{Robert L. Pego} 
\address{Department of Mathematical Sciences and Center for Nonlinear Analysis\\
Carnegie Mellon University, Pittsburgh, Pennsylvania, PA 12513, USA}
\email{rpego@cmu.edu}
\author[Y. Pu]{Yue Pu}
\address{Department of Mathematical Sciences and Center for Nonlinear Analysis\\
Carnegie Mellon University, Pittsburgh, Pennsylvania, PA 12513, USA}
%\email{ypu@andrew.cmu.edu}
\email{flamesofmoon@gmail.com}

\date{\today} 

%\subjclass[2000]{Primary 35}
%\keywords{Saint-Venant equations, etc.}

%The current file is: \currfilename. \\

%%%%%%%%%%%%%%%%%%%%%%%%%%%%%%%%%%%%%%%%%%%%%%%%%%%%%%%%%%%%%%%%%%%%%%

\begin{abstract} 
We study local-time well-posedness and breakdown for solutions
of regularized Saint-Venant equations (regularized classical shallow water equations)
recently introduced by Clamond and Dutykh.  
The system is linearly non-dispersive, and
smooth solutions conserve an $H^1$-equivalent energy.
No shock discontinuities can occur, 
but the system is known to admit weakly singular shock-profile solutions
that dissipate energy.  We identify a class of small-energy
smooth solutions that develop singularities in the first derivatives in finite time. 
\end{abstract}

\maketitle
{\it Keywords. }
Saint-Venant equations, Green-Naghdi equations, shallow water,
weak solutions, nonlocal hyperbolic system, long waves, breakdown

{\it Mathematics Subject Classification.} 35B44, 35B60, 35Q35, 76B15, 35L67 

%\thispagestyle{fancy}

%\pagebreak

%\tableofcontents

%\pagebreak

%%%%%%%%%%%%%%%%%%%%%

\section{Introduction}
The main aim of this paper is to demonstrate singularity formation 
for classical solutions of a system of 
regularized Saint-Venant (shallow-water) equations that was
introduced by Clamond and Dutykh in \cite{ClDu18}.
In conservation form in one space dimension, these regularized 
Saint-Venant (rSV) equations may be written
\begin{gather}
h_t + (hu)_x = 0 \,,
\label{e:rsvh}
\\ (hu)_t + (hu^2+\frac12 gh^2 + \epsl \calS )_x = 0,
\label{e:rsvm}
\\
 \calS \defeq
h^3 (-u_{tx}-u u_{xx}+u_x^2)
-gh^2 
\left(hh_{xx}+\frac12 h_x^2 \right)  \,.
\label{e:rsvR}
\end{gather}
Here $h(x,t)$ represents the depth of the fluid, 
$u(x,t)$ represents the average horizontal velocity of the fluid column,
$g$ is the gravitational constant, and $\epsl$ is a dimensionless
regularization parameter.
This system admits 
\blue{\emph{weakly singular shock layer} solutions} 
that were described in \cite{pu2018weakly}.

The rSV equations above were derived in \cite{ClDu18} 
as a non-dispersive variant of the Green-Naghdi equations 
\cite{GN74,green1976derivation} with zero surface tension 
(also called Serre equations \cite{serre1953b}).
Equations \eqref{e:rsvh}--\eqref{e:rsvR} follow from a least action
principle for the Lagrangian with density given by
\begin{equation}
\frac{\mathcal L}\rho = 
\frac12 hu^2 -\frac12 g h^2 
+ \epsl\left( 
\frac12 h^3 u_x^2 - \frac12 gh^2h_x^2
\right) + (h_t+(hu)_x)\phi \,,
\end{equation}
with a Lagrange multiplier field $\phi$ to enforce \eqref{e:rsvh}.
The Green-Naghdi equations with surface tension
take the same dimensional form as in \eqref{e:rsvh}-\eqref{e:rsvm},
but with 
$\epsl\calS$ above replaced by the quantity
\begin{equation}\label{d:R-GN}
\calS_{\rm GN} = \frac13 h^3 (-u_{tx}-u u_{xx}+u_x^2)
-\gamma \left(hh_{xx}-\frac12 h_x^2 \right)  \,,
\end{equation}
where $\gamma$ is the ratio of surface tension to density.
The Green-Naghdi equations
derive analogously from the Lagrangian with density
\begin{equation}
\frac{{\mathcal L}_{\rm GN}}\rho = 
\frac12 hu^2 -\frac12 g h^2 
+  
\frac16 h^3 u_x^2 - \frac12 \gamma h_x^2
+ (h_t+(hu)_x)\phi \,.
\end{equation}

The Green-Naghdi equations hold an important place among 
dispersive approximations to the full water wave equations, 
insofar as the small-slope assumptions they are based on are minimal
and they are capable of correctly approximating large-amplitude waves.
Many other dispersive water-wave models,
such as the Korteweg-de Vries, Camassa-Holm, and various Boussinesq systems,
can be derived from the Green-Naghdi equations by imposing further restrictions
on amplitude or structure; see the treatment by Lannes \cite{lannes2013water}.
Local-time well-posedness for the Green-Naghdi equations was studied in 
\cite{li2006shallow,ASLannes2008,israwi2011large}, 
and in \cite{li2006shallow} Y.~Li proved
that they constitute an approximation
to the water wave equations that is better than the classical shallow water
equations (which correspond to $\eps=0$ above).  
\blue{
And recently, the Green-Naghdi system has been found to have 
weakly singular peakon-like traveling-wave solutions, 
when the Bond number ${\rm Bo}=gh_\infty^2/\gamma$ takes the critical value 3
\cite{MDAZ2017,DHM2018}.
}
Yet, as far as we know, the \blue{analytical} question of whether smooth solutions
for the Green-Naghdi equations always exist globally in time,
or whether instead singularities may develop, remains open.

Smooth solutions of the rSV equations also satisfy a conservation law for energy, in the form
\begin{equation}\label{e:rsvE}
\eep_t + \qqp_x = 0 \,,
\end{equation}
{where}
\begin{align}
\label{d:eep}
&\eep \defeq \half hu^2  + \half gh^2 + \epsl \left(\half h^3u_x^2 +  \half gh^2h_x^2\right)\,,\\
&\qqp \defeq {\half h u^3 + gh^2 u} +  \epsl \left(\paren{\half h^3u_x^2 + \half gh^2h_x^2 +  
\calS}u
        +  gh^3h_xu_x\right)\,.
\end{align}
For $\eps>0$ and provided the fluid depth $h$ remains larger than a positive constant, 
this energy controls the $L^2$ norms of the derivatives of both $h$ and $u$, precluding
shock formation.
By comparison, the Green-Naghdi energy, given by %for $\gamma=0$ by 
\begin{equation}\label{e:EGN}
\eee_{\rm GN} \defeq \half hu^2  + \half gh^2 +  \frac16 h^3u_x^2 
+ \frac12\gamma h_x^2 \,,
\end{equation}
fails to control $h_x$ for $\gamma=0$, 
so one might guess the Green-Naghdi equations without surface tension
are ``less regularizing'' than the rSV equations.

When linearized about any constant state $(h_\star,u_\star)$, the opposite seems to be the case, however.
For the linearized rSV equations, the phase velocity of linear waves is independent of frequency. 
Thus the rSV equations are linearly {\em dispersionless}---they appear to lack a 
linear dispersive regularization mechanism.

This dispersionless nature of the rSV equations and the tendency of numerically computed solutions
not to generate oscillations and discontinuities were primary reasons given by
Clamond and Dutykh \cite{ClDu18} for interest in studying these equations. 
These authors pointed out, in fact, that 
the rSV equations are less accurate than the Green-Naghdi equations 
for approximating the exact water-wave dispersion relation 
(with zero surface tension) in the long-wave regime,
and only as accurate as the classical shallow-water system.
We note, however, that actually there is a physical regime where
the rSV equations 
do approximate the linear dispersion of water waves as accurately 
as Green-Naghdi equations.
From \eqref{e:rsvh}--\eqref{d:R-GN} above, clearly both systems
yield the same linearization at depth $h_\star$
when both $\epsl$ and the inverse Bond number 
${\rm Bo}\inv= \gamma/{g h_\star^2}$ take the value $\frac13$.
(It is well-known that linear dispersion vanishes in the Korteweg-de Vries 
approximation at this critical value of Bond number, and the same is clear from the 
dispersion relation for Green-Naghdi equations with surface tension
in \cite[Eq. (6.10)]{GN74}.)  
Even in this case, though, 
the nonlinear factor $gh^2$ in \eqref{e:rsvR} 
makes the rSV system formally less accurate than Green-Naghdi 
as a weakly nonlinear water-wave approximation,  
unless the amplitude variation is so small that the differences with
\eqref{d:R-GN} are of the same order as the terms neglected there.

What is more interesting for present purposes is the fact that the rSV 
equations admit a new kind of traveling wave, which is
a weakly singular analog of classical shallow-water shock waves:
It is shown in \cite{pu2018weakly} that for every such classical shock,
the rSV equations admit a corresponding non-oscillatory traveling 
wave solution which is continuous but only piecewise smooth, having a weak singularity at a single point 
where the energy is dissipated as it is for the classical shock.
Numerical evidence provided in \cite{pu2018weakly} suggests that a weak singularity can develop
from a smooth solution and start to dissipate energy after some positive time.

It is the purpose of this paper to partly address the question of well-posedness 
and whether singularity formation occurs for smooth solutions of the rSV equations.
Our goals are: 
(i) to provide a basic theory of local-time well-posedness and lifespan for classical solutions with sufficient Sobolev regularity;
(ii) to prove that depth $h$ remains strictly positive for small-energy perturbations of a constant state;
and (iii) to identify initial data for which no classical solution can exist globally in time.  
From our continuation criteria for solutions we infer that the sup norms of both $h_x$ and $u_x$ 
blow up as $t$ approaches the maximal time of existence.

{Our local-time well-posedness theorem (Theorem~\ref{thm:wellposedness} in section~3) 
deals with (possibly large) initial perturbations of a constant state in $H^s(\RR)$ for some 
\blue{real $s>\frac32$}, 
such that the depth $h$ is initially uniformly positive. 
The depth remains uniformly positive for small-energy perturbations---this
 follows from energy conservation and Proposition~\ref{p:Ebound} in section 2, 
which is essentially a Sobolev-type inequality.
\blue{In section~\ref{s:continuation} we establish criteria for finite-time blow-up
based on the sup norm of the derivatives $h_x$ and $u_x$ and/or the vanishing of $h$.
}
Our main blow-up argument (the proof of Theorem~\ref{thm:blowup} in section \ref{s:blowup}) 
identifies a class of small-energy initial data, 
defined by a few explicit inequalities (all listed in Lemma~\ref{lem:ICs}), 
for which $h_x$ and $u_x$ must both blow up in sup norm.
The nature of the blow up is that the derivative of one of the classical shallow-water Riemann invariants 
$R_\pm = u\pm 2\sqrt{gh}$ blows up to $-\infty$ while remaining bounded above, 
while the derivative of the other Riemann invariant remains bounded.
}

To give some insight into how our analysis will proceed,
observe that in the momentum equation \eqref{e:rsvm}, there are two terms involving time derivatives. 
It is natural to combine them and transform the momentum equation \eqref{e:rsvm} into a standard evolution equation 
for $u$. For a smooth function $w:\RR\to \RR$, define 
\begin{equation*}
\iii_h(w) = hw - \epsl (h^3 w_x)_x. 
\end{equation*}
or, in terms of composition of operators,
\begin{equation}  
\iii_h = h - \epsl\partial_x\circ h^3 \circ\partial_x.   \label{def:ih} 
\end{equation}
Formally acting $\iii_h^{-1}$ on both sides of the momentum equation \eqref{e:rsvm}, one obtains
\begin{equation}
u_t + gh_x + uu_x + \epsl \iii_h^{-1} \partial_x \paren{2h^3 u_x^2 - \half g h^2 h_x^2} = 0.  \label{e:rsvu}
\end{equation}
This is the standard evolution equation for horizontal velocity in the classical shallow-water system plus 
a nonlocal term. Because we expect the operator $\iii_h^{-1}$ gains two derivatives, the nonlocal term
is formally of {\em order zero} and represents a lower-order perturbation to the classical shallow-water system.

This is an important difference with the Green-Naghdi system 
as treated by Israwi in \cite{israwi2011large} without surface tension.
The system with constant surface tension $\gamma>0$ appears no better, 
\blue{even though the Green-Naghdi energy in \eqref{e:EGN} 
controls the $H^1$ norm of $h$ in that case. 
For, instead of \eqref{e:rsvu}, the momentum equation takes the form
\begin{equation*}
u_t + gh_x + uu_x + 
\iii_h^{-1} \partial_x \paren{
\frac23 h^3u_x^2 + (\frac13 g h^2 -\gamma) h h_{xx}+ \frac12\gamma h_x^2
} = 0.  
\end{equation*}
The trouble is that 
the nonlocal term is stronger here than in \eqref{e:rsvu},
remaining formally of order one in $h$, except when linearized at
the constant depth $h_\star$ where $\gamma = \frac13 gh_\star^2$, 
corresponding to Bond number ${\rm Bo}=3$. 
}%BLUE

For the rSV system, then, equations \eqref{e:rsvh} and \eqref{e:rsvu} constitute
a nonlocal hyperbolic system for which
we are able to use a standard shallow-water symmetrizer to study well-posedness,
and study blowup using (coupled) Ricatti-type equations for the derivatives of classical Riemann invariants.
It turns out that, in addition to coupling the pair of Ricatti-type equations,
 the nonlocal terms contain a local part that alters the main quadratic terms.
This important contribution to the Ricatti-type equations 
appears to change the nature of blowup profiles as compared with the classical shallow-water case. We discuss this difference heuristically in section \ref{s:asym profile} below.

The rSV equations that we study in this paper also loosely resemble a number of 2-component
systems that generalize the Camassa-Holm equation; see \cite{CLZ2006,Ivanov2006,Kuz2007,HoNaTr2009,IonescuKruse2015}
for studies of such systems.  One of the more extensively studied systems of this kind,
appearing in \cite{CLZ2006,Ivanov2006,Kuz2007}, is an integrable 2-component Camassa-Holm system
that can be written in the form
\begin{gather}
  h_t + (hu)_x = 0, \\
  u_t + 3 u u_x - u_{txx} - 2u_xu_{xx} - uu_{xxx} + g hh_x = 0,
\end{gather}
In the context of shallow-water theory, this system has been derived by
Constantin and Ivanov \cite{CI2008} (see also \cite{IonescuKruse2013}).
For this system, derivative blow-up does not occur---smooth solutions exist globally in time
for all smooth initial data for which $h$ is initially strictly positive, see
\cite{CI2008,GuanYin2010,GuiLiu2010,GHR2012}.

{An interesting question that remains open
is whether the rSV equations admit globally defined weak solutions
for arbitrary initial perturbations small in $H^1(\RR)$. 
The rSV system does admit energy-conserving small-energy traveling waves 
with cusp singularities, as described in \cite{pu2018weakly}.
The scalar Camassa-Holm equation, 
which famously admits weak solutions that include peakon traveling waves,
has global existence for weak solutions that
may conserve the $H^1$ energy \cite{bressan2007conserved} 
or dissipate it \cite{bressan2007dissipate}.
An expected difference between the scalar Camassa-Holm equation
and the rSV system, however, is that in general we do not expect weak rSV 
solutions to conserve energy globally in time,
due to the presence of energy-dissipating weakly singular traveling waves.
}

\section{An energy criterion for uniform positivity of depth}

We begin the analysis of solutions of the rSV equations 
\eqref{e:rsvh}--\eqref{e:rsvR}
by establishing an explicit energy criterion that ensures the uniform positivity 
of the depth $h$ for small $H^1(\R)$ perturbations of any given 
constant state $(h_\star,u_\star)$ with $h_\star>0$, $u_\star\in\R$.
The proof resembles the proof of the Sobolev inequality for the $H^1$ norm,
and exploits the simple idea that for the surface to reach the bottom, 
relative energy has to be sufficiently large.
Our criterion has no apparent analog for the Green-Naghdi 
\blue{system with $\gamma=0$}
or the two-component Camassa-Holm system mentioned above,
because the energies for those systems do not control the integral of $h_x^2$.

Formally, a smooth solution $(h,u)$ of the rSV equations defined
for all $x\in\R$, with the property that $(h-h_\star,u-u_\star)\in H^1(\R)$ 
for all $t$, conserves the relative energy
\begin{equation}\label{d:Estar}
E_\star = \int_\R \half h(u-u_\star)^2 + \half g(h-h_\star)^2 
+ \half\epsl\paren{h^3 u_x^2 + gh^2 h_x^2}\,dx \,.
\end{equation}
By fixing $t$ and discarding the terms involving $u$, we infer the following.

\begin{proposition}\label{p:Ebound}
Let $h_\star>0$, $u_\star\in\R$ and suppose $(h-h_\star,u-u_\star)\in H^1(\R)$.
Then

(a) For all $x\in\R$ we have
\begin{equation}\label{e:Ehineq}
E_\star \geq \frac{g\sqrt\eps}3 (h(x)-h_\star)^2(2h(x)+h_\star).
\end{equation}

(b) If $E_\star< \frac13 g\sqrt\eps h_\star^3$, 
then we have $h(x)\geq h_E>0$ for all $x\in\R$, where $h_E$
is the unique solution in $(0,h_\star)$ of 
\begin{equation}\label{d:hE}
E_\star = \frac{g\sqrt\eps}3 (h_E-h_\star)^2(2h_E+h_\star).
\end{equation}
\end{proposition}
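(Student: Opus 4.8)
The plan is to obtain part (a) by a direct pointwise estimate on the $\epsl$-dependent gradient term in the relative energy $E_\star$, using the fundamental theorem of calculus together with the Cauchy--Schwarz inequality, and then to derive part (b) as an elementary consequence of (a) by analyzing the cubic in the variable $h$.

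For part (a), fix $x\in\R$ and note that since $h-h_\star\in H^1(\R)$, the function $h$ is continuous and tends to $h_\star$ at $\pm\infty$, so $(h-h_\star)^3$ vanishes at infinity and we may write, for either choice of semi-infinite interval $I_x=(-\infty,x]$ or $[x,\infty)$,
\begin{equation*}
(h(x)-h_\star)^3 = \pm\int_{I_x} 3(h-h_\star)^2 h_y \,dy.
\end{equation*}
The integrand equals $3(h-h_\star)^2 h_y$; I would bound its absolute value using $2|ab|\le a^2+b^2$ in the weighted form
\begin{equation*}
3\,(h-h_\star)^2 |h_y| = 2\cdot \tfrac{\sqrt3}{2} (h-h_\star)\cdot \sqrt3\,(h-h_\star)|h_y|
\le \frac{g\sqrt\eps}{1}\Bigl(\text{terms matching } \half g(h-h_\star)^2 + \half \epsl g h^2 h_y^2\Bigr),
\end{equation*}
the point being to produce exactly the two terms $\half g(h-h_\star)^2$ and $\half\epsl g h^2 h_y^2$ that appear in the integrand of $E_\star$ (after discarding the nonnegative $u$-terms and the nonnegative $\half\epsl h^3 u_y^2$ term). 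Carrying out the weighting so that the geometric-mean constant comes out to $g\sqrt\eps$, one gets
\begin{equation*}
|h(x)-h_\star|^3 \le \frac{3}{g\sqrt\eps}\int_{I_x}\Bigl(\half g(h-h_\star)^2 + \half\epsl g h^2 h_y^2\Bigr)dy \le \frac{3E_\star}{g\sqrt\eps}.
\end{equation*}
This would give $E_\star\ge \frac{g\sqrt\eps}{3}|h(x)-h_\star|^3$; to upgrade this to the sharper inequality \eqref{e:Ehineq} with the factor $(2h(x)+h_\star)$ in place of $|h(x)-h_\star|$, I would instead integrate a better-chosen antiderivative. Observing that $\frac{d}{dh}\bigl[(h-h_\star)^2(2h+h_\star)\bigr]=6h(h-h_\star)$, I would write $(h(x)-h_\star)^2(2h(x)+h_\star)$ as $\pm\int_{I_x} 6h(h-h_\star)h_y\,dy$ and then apply $6h|h-h_\star||h_y| \le g\sqrt\eps^{-1}\cdot\bigl(g(h-h_\star)^2 + \epsl g h^2 h_y^2\bigr)\cdot$(appropriate constant) — more precisely, $2\cdot\sqrt{g}|h-h_\star|\cdot\sqrt{g\eps}\,h|h_y|\le g(h-h_\star)^2+g\epsl h^2 h_y^2$, so that $6h|h-h_\star||h_y|\le \frac{3}{\sqrt\eps}\bigl(g(h-h_\star)^2+g\epsl h^2h_y^2\bigr)$, and integrating yields exactly \eqref{e:Ehineq}.

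For part (b), consider the function $\phi(s) = \frac{g\sqrt\eps}{3}(s-h_\star)^2(2s+h_\star)$ for $s\in[0,h_\star]$. One checks $\phi(h_\star)=0$, $\phi(0)=\frac13 g\sqrt\eps h_\star^3$, and $\phi'(s)=2g\sqrt\eps\, s(s-h_\star)<0$ on $(0,h_\star)$, so $\phi$ is strictly decreasing from $\frac13 g\sqrt\eps h_\star^3$ to $0$ on $[0,h_\star]$; hence if $E_\star<\frac13 g\sqrt\eps h_\star^3$ there is a unique $h_E\in(0,h_\star)$ with $\phi(h_E)=E_\star$, which is \eqref{d:hE}. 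Now suppose for contradiction that $h(x_0)<h_E$ for some $x_0$. If in fact $h(x_0)<h_\star$, then by \eqref{e:Ehineq} and monotonicity of $\phi$ on $[0,h_\star]$ we get $E_\star\ge\phi(h(x_0))>\phi(h_E)=E_\star$, a contradiction; and the case $h(x_0)\ge h_\star>h_E$ cannot hold since $h_E<h_\star$. Therefore $h(x)\ge h_E>0$ for all $x$, as claimed.

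The only mild subtlety — what I would treat as the main point to get right rather than a genuine obstacle — is choosing the weighting constants in the Cauchy--Schwarz/Young step so that the resulting bound matches the precise cubic $(h-h_\star)^2(2h+h_\star)$ in \eqref{e:Ehineq} rather than merely $|h-h_\star|^3$ (which is weaker near $h=0$, where $|h-h_\star|^3\to h_\star^3$ but $(h-h_\star)^2(2h+h_\star)\to h_\star^3$ as well, so the two agree at the endpoint but the sharper one is needed to pin down $h_E$ via the exact identity \eqref{d:hE}). Everything else is routine: the decay of $h-h_\star$ at infinity from membership in $H^1(\R)$, the discarding of the manifestly nonnegative velocity and $u_x$ contributions to $E_\star$, and the one-variable calculus for $\phi$.
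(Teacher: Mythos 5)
There is a genuine quantitative gap in your part (a): as written, the single--half-line argument proves \eqref{e:Ehineq} only with the constant $\frac{g\sqrt\eps}{6}$ in place of $\frac{g\sqrt\eps}{3}$. You write $(h(x)-h_\star)^2(2h(x)+h_\star)=\pm\int_{I_x}6h(h-h_\star)h_y\,dy$ over \emph{one} semi-infinite interval and then apply Young's inequality; the correctly weighted step (note the stray $g$ in your displayed bound) is
\[
6h\,|h-h_\star|\,|h_y|\ \le\ \frac{3}{g\sqrt\eps}\Bigl(g(h-h_\star)^2+g\eps h^2h_y^2\Bigr)
\ =\ \frac{6}{g\sqrt\eps}\Bigl(\frac12 g(h-h_\star)^2+\frac12 g\eps h^2h_y^2\Bigr),
\]
so integrating over $I_x$ and comparing with $E_\star$ from \eqref{d:Estar} yields only $E_\star\ge\frac{g\sqrt\eps}{6}(h(x)-h_\star)^2(2h(x)+h_\star)$. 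The missing idea is to use \emph{both} half-lines with opposite signs: write
\[
\frac13(h(x)-h_\star)^2(2h(x)+h_\star)=\int_{-\infty}^{x}h(h-h_\star)h_y\,dy-\int_{x}^{\infty}h(h-h_\star)h_y\,dy,
\]
and bound the integrand on each half-line by $\pm ab\le\frac12(a^2+b^2)$ with $a=\sqrt g\,(h-h_\star)$, $b=\sqrt{g\eps}\,h h_y$ and the sign matched to the half-line; the two half-line integrals then together consume exactly one copy of the full-line energy density, giving the stated constant. (Equivalently, pick the half-line carrying at most half of $\int_\R(\frac12 g(h-h_\star)^2+\frac12 g\eps h^2h_y^2)\,dx$.) The factor of $2$ is not cosmetic: Remark~\ref{rem:hbounds}(i) asserts \eqref{e:Ehineq} is sharp, and the precise threshold $\frac13 g\sqrt\eps h_\star^3$ and the resulting value of $h_E$ are used quantitatively later (e.g.\ in \eqref{initial energy}, where $E_\star\le\frac16 g\sqrt\eps h_\star^3$ must force $h\ge\frac12 h_\star$). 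Your part (b) and the preliminary reductions (decay of $h-h_\star$ at infinity, discarding the nonnegative $u$- and $u_x$-terms) are correct once (a) is repaired.
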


\begin{proof}
Because $\frac12(a^2+b^2)\geq \pm ab$, for any $x\in\R$ we have

\begin{align*}
E_\star &\geq {g\sqrt\eps} 
\left(
\int_{-\infty}^x (h-h_\star)h h_x \,dx
-\int_x^{\infty} (h-h_\star)h h_x \,dx\right)\\
&= \frac{g\sqrt\eps}3 (h(x)-h_\star)^2(2h(x)+h_\star).
\end{align*}
This proves (a). To deduce (b), note that the map 
$w\mapsto (w-h_\star)^2(2 w+h_\star)$ is strictly decreasing
for $w\in(0,h_\star)$.
\end{proof}

\begin{remark} \label{rem:hbounds}
(i) The lower bound $h(x)\geq h_E$ in part (b) is sharp, as one can see by 
choosing $h(x)$ to be an even function, determined on $[0,\infty)$ as the solution of 
\[
\sqrt\epsl h h_x = h_\star-h \,,\quad h(0)=h_E.
\]
\item[(ii)] For periodic functions on $\R$ having finite period $L$, the
same estimates hold with $E_\star$ obtained by integrating over a single period 
and with
$h_\star$ replaced by the average value of $h$ over one period.  One alters the proof
by replacing the endpoints $-\infty$ and $\infty$ by points $a$ and $a+L$ 
where $h(a)=h_\star$.
\item[(iii)] Using the upper bound in case (a), the lower bound in case (b) implies
that $(h(x)-h_\star)^2 \leq (h_E-h_\star)^2$, whence $h(x)\leq 2h_*-h_E$ for all $x$.
\item[(iv)] The part of the relative energy that we are using to bound the depth 
from below corresponds in principle to potential energy of the fluid. 
In an exact physical fluid model with zero surface tension, however, 
it is possible to perturb a flat fluid surface to reach the bottom 
with a small change in potential energy,
by creating a downward cusp on a tiny horizontal length scale. 
\end{remark}

\section{Local well-posedness, and scaling of lifespan}
%\section{Lifespan, uniqueness, and continuation criteria}

In this section, we will establish finite-time 
\blue{existence and uniqueness} 
for solutions of the initial-value problem 
for the rSV system that have finite energy relative to a constant
state $(h_\star,0)$ with $h_\star>0$. (We take $u_\star=0$ without loss
due to Galilean invariance of the system.)
We will pay particular attention to how the existence time 
(lifespan of the solution) varies according to the value of 
the nonlinearity parameter $\alpha = a / h_\star$, 
where the parameter $a$ indicates the amplitude of the perturbation.
For example, in the inviscid Burgers equation $u_t+uu_x=0$, 
a Ricatti-type calculation for $u_x$ shows that the existence time 
for smooth solutions is proportional to $1/\alpha$.

For this reason, we make the following change of variables, writing
\begin{equation}\label{e:changevars}
h  = h_\star + \alpha\eta, \quad \text{ and replacing $u$ by }\alpha u.
\end{equation}
Here and below we retain the notation $h=h_\star+\alpha\eta$ for brevity, however.
The scaled pair $(\eta,u)$ now satisfies the following system:
\begin{gather}
\eta_t +  (hu )_x = 0  \,,
\label{e:rsvh_ul}
\\ h(u_t + g\eta_x +  \alpha uu_x) + \epsl\alpha 
\tilde{\calS}_x = 0\,,
\label{e:rsvm_ul}
\\
 \alpha\tilde{\calS} =
h^3 (- u_{tx} - \alpha u u_{xx}+\alpha u_x^2 )- gh^2\left( h\eta_{xx}+\frac12  \alpha \eta_x^2 \right)  \,.
\label{e:rsvR_ul}
\end{gather}
In terms of $\iii_h = h - \epsl\partial_x\circ h^3 \circ\partial_x$, 
we observe that we can reformulate the momentum equation \eqref{e:rsvm_ul} as 
\begin{equation}
u_t + g\eta_x + \alpha uu_x + \epsl\alpha \iii_h^{-1}\partial_x\paren{2h^3 u_x^2 - \half g h^2\eta_x^2} = 0. \label{e:rsvu_ul}
\end{equation}

Equations \eqref{e:rsvh_ul}, \eqref{e:rsvu_ul} form a (nonlocal) hyperbolic system
that takes the form
\begin{equation}  \label{e:system}
W_t + B(W)W_x + F(W) = 0,
\end{equation}
with $W = (\eta, u)^T$ and where
\begin{equation}
B(W) = \paren{\begin{array}{ll}
\alpha u & h \\ g & \alpha u
\end{array}}, \quad
F(W) = \paren{\begin{array}{c}
0 \\ f(W)
\end{array}} ,
\end{equation}
with %\bob{Is $f$ used too much?}
\begin{equation}
f(W) = \epsl\alpha \iii_h^{-1}\partial_x\paren{2h^3 u_x^2 -\half g h^2\eta_x^2}.
\label{d:fW}
\end{equation}
For this system, we shall use a standard iteration scheme for symmetrizable hyperbolic systems
to prove the main theorem of this section. 
We remark that both of the parameters $\alpha$ and $\epsl$ are dimensionless,
and there is some interest in understanding how solutions 
behave in the regime when one or both parameters become small.

\begin{theorem}  \label{thm:wellposedness}
Fix $h_\star>0$.  Let \blue{$s> \frac32$ be real}, and let $\epsl,\alpha\in(0,1]$. 
Assume the initial data $W^0 = (\eta^0, u^0)^T \in H^s(\RR)$ and satisfies 
\begin{equation}
h^0_{\min}\defeq \inf_{x\in\RR} (h_\star + \alpha \eta^0(x)) >0.
\label{nonzero depth condition}
\end{equation}
Then there exists $T_0 = T_0(s,\|W^0\|_{H^s},h^0_{\min})>0$
independent of $\epsl$ and $\alpha$,
such that the regularized shallow-water system \eqref{e:system} %\eqref{e:rsvh_ul},\eqref{e:rsvu_ul} 
admits a unique solution 
\[
W = (\eta,u)^T\in 
C([0, T_0/\alpha]; H^s(\RR))
\,\cap\, C^1([0, T_0/\alpha]; H^{s-1}(\RR))\,,
\]
having the initial condition $W^0$ 
and preserving the positive depth condition 
\[
\inf_{x\in\R}h(x,t)>0.
\]
Moreover, the following conservation of energy property holds:
$\tilde{E}_\star  = $const, where
\begin{equation}
\tilde E_\star = \half \int_\R  hu^2 +  g\eta^2
+ \epsl\paren{h^3 u_x^2 + gh^2 \eta_x^2}\,dx \,.
\label{e:tildeE}
\end{equation}
\end{theorem}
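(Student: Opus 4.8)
The plan is to run the standard symmetrizable-hyperbolic iteration scheme for the quasilinear nonlocal system \eqref{e:system}, with the essential points being (i) that the nonlocal forcing $f(W)$ in \eqref{d:fW} is genuinely order zero and Lipschitz on the relevant Sobolev balls, uniformly in $\epsl,\alpha\in(0,1]$, and (ii) that the lifespan bound $T_0/\alpha$ emerges from tracking the explicit $\alpha$-dependence in the energy estimates. First I would symmetrize: for the principal part $B(W)$ in \eqref{e:system}, the matrix
\[
S(W) = \begin{pmatrix} g & 0 \\ 0 & h \end{pmatrix}
\]
is a symmetrizer, since $SB = \begin{pmatrix} \alpha g u & gh \\ gh & \alpha h u\end{pmatrix}$ is symmetric, and $S$ is positive definite and bounded above/below as long as $h$ stays in a fixed compact subset of $(0,\infty)$ — which is exactly what condition \eqref{nonzero depth condition} gives us initially, and what we must propagate. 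This is the classical shallow-water symmetrizer, and the point emphasized in the introduction is that the regularization does not touch the principal symbol; it only enters through $f(W)$.

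Next I would set up the iteration: define $W^{(0)} \equiv W^0$ and solve the linear problems $W^{(n+1)}_t + B(W^{(n)})W^{(n+1)}_x + F(W^{(n)}) = 0$ with data $W^0$. For the linear estimates one applies $\Lambda^s = (1-\Dxx)^{s/2}$, pairs against $S(W^{(n)})\Lambda^s W^{(n+1)}$ in $L^2$, and uses the Kato–Ponce commutator estimate to handle $[\Lambda^s, B(W^{(n)})]W^{(n+1)}_x$; the key structural input is that the $\alpha u$-entries of $B$ carry an explicit factor $\alpha$, so the "transport" contribution to $\frac{d}{dt}\|W^{(n+1)}\|_{H^s}^2$ is $O(\alpha)$ times a polynomial in the $H^s$ norms, while the $g$ and $h$ off-diagonal entries are symmetrized away up to lower-order commutators. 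The forcing term contributes $\|\Lambda^s F(W^{(n)})\|_{L^2}\|\Lambda^s W^{(n+1)}\|_{L^2}$, and here I need the bound $\|f(W)\|_{H^s} \lesssim \epsl\alpha\, C(\|W\|_{H^s}, h_{\min}^{-1})$ with the constant \emph{independent} of $\epsl,\alpha$ — this comes from writing $f(W) = \epsl\alpha\, \iii_h^{-1}\Dx(2h^3u_x^2 - \tfrac12 g h^2 \eta_x^2)$ and invoking the elliptic regularity of $\iii_h^{-1}$ (Lemma on $\iii_h$, which gains two derivatives with bounds depending on $\inf h>0$ and on $\|h\|$ in a suitable norm; this should be established as a preliminary lemma if not already available). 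Because $f$ carries the prefactor $\epsl\alpha \le \alpha$, the forcing is again $O(\alpha)$. The cumulative effect is a differential inequality $\frac{d}{dt}\|W^{(n+1)}\|_{H^s}^2 \le \alpha\, \Phi(\|W^{(n)}\|_{H^s}, \|W^{(n+1)}\|_{H^s}, h_{\min}^{-1})$, which — by the usual bootstrap on the time interval — confines all iterates to a fixed ball in $C([0,T_0/\alpha];H^s)$ with $T_0$ depending only on $s$, $\|W^0\|_{H^s}$, $h_{\min}^0$. One also needs to propagate the positive-depth condition: since $h_t = -\alpha(hu)_x - $ (correction from the scaled form) is bounded in sup norm on the ball, $\inf_x h(x,t)$ cannot reach zero before time $\sim 1/\alpha$ after possibly shrinking $T_0$.

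Then comes contraction in the lower-order norm: estimate $W^{(n+1)} - W^{(n)}$ in $L^2$ (or $H^{s-1}$ if $s-1$ is not enough for the nonlinear estimates, but $L^2$ plus the uniform $H^s$ bound and interpolation typically suffices since $s>\tfrac32$), using that $B$ and $F$ are Lipschitz on the ball — for $F$ this is exactly where a Lipschitz bound $\|f(W_1)-f(W_2)\|_{L^2}\lesssim \|W_1-W_2\|_{H^{s-1}}$ is needed, again following from the mapping properties of $\iii_h^{-1}$ together with the elementary Lipschitz dependence of $\iii_h^{-1}$ on $h$ through the resolvent identity $\iii_{h_1}^{-1}-\iii_{h_2}^{-1} = \iii_{h_1}^{-1}(\iii_{h_2}-\iii_{h_1})\iii_{h_2}^{-1}$. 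One gets $\sup_{[0,T_0/\alpha]}\|W^{(n+1)}-W^{(n)}\|_{L^2} \le \tfrac12 \sup \|W^{(n)}-W^{(n-1)}\|_{L^2}$ after possibly shrinking $T_0$ once more, yielding a Cauchy sequence, hence a limit $W$ in $C([0,T_0/\alpha];L^2)$. Interpolating the uniform $H^s$ bound with $L^2$ convergence gives convergence in $C([0,T_0/\alpha];H^{s'})$ for every $s'<s$, enough to pass to the limit in the equation; weak-$*$ compactness and Bona–Smith-type regularization then upgrade to $W\in C([0,T_0/\alpha];H^s)\cap C^1([0,T_0/\alpha];H^{s-1})$ (the time regularity read off from the equation once $W\in C_t H^s$). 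Uniqueness follows from the same $L^2$ difference estimate applied to two solutions, via Gr\"onwall. Finally, conservation of $\tilde E_\star$ in \eqref{e:tildeE}: for solutions at regularity $H^s$ with $s>\tfrac32$ one verifies $\frac{d}{dt}\tilde E_\star = 0$ directly from \eqref{e:rsvE}–\eqref{d:eep} rewritten in scaled variables by multiplying the mass equation by $g\eta$, the momentum equation by $hu$ (equivalently \eqref{e:rsvu_ul} by $hu$), integrating by parts, and checking the flux terms are integrable and decay — justified first for smooth data and then for $H^s$ data by the continuity of both sides in $H^s$.

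\textbf{Main obstacle.} The delicate point is not the abstract scheme but obtaining the $\epsl,\alpha$-\emph{uniform} elliptic estimates for $\iii_h^{-1}$ on $H^s$ with sharp dependence on $\inf h$, and then checking that every nonlinear/commutator term in the iteration carries a compensating power of $\alpha$ so that the lifespan scales like $1/\alpha$ rather than merely being finite. In particular one must be careful that the $h$-dependence inside $\iii_h^{-1}$ does not secretly reintroduce $\epsl$ in a bad way — the operator is $h - \epsl\Dx h^3\Dx$, so its coercivity improves as $\epsl$ grows but degenerates as $\epsl\to 0$; the resolution is that the whole term $f(W)$ has the prefactor $\epsl\alpha$, which exactly cancels the $\epsl^{-1}$ loss in $\|\iii_h^{-1}\Dx\|_{H^s\to H^s}$, leaving a clean $O(\alpha)$ bound. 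Getting that cancellation bookkeeping right, uniformly down to $\epsl = 0^+$, is the crux.
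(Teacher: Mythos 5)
Your proposal follows essentially the same route as the paper: the shallow-water symmetrizer $\mathrm{diag}(g,h)$, a frozen-coefficient iteration with Kato--Ponce commutator estimates tracking the explicit factor of $\alpha$ in $B$ and $F$ to get the $T_0/\alpha$ lifespan, a uniform-in-$\epsl$ elliptic estimate showing $\epsl\,\iii_h^{-1}\partial_x$ is order zero (the paper's Lemma~\ref{lemma:ih-1}), contraction in a lower norm, and energy conservation verified first at higher regularity and then by approximation. The only quibble is the displayed bound $\norm{f(W)}_{H^s}\lesssim \epsl\alpha\,C$, which should read $\alpha\,C$ since the $\epsl$ is consumed in cancelling the $\epsl^{-1}$ degeneracy of $\iii_h^{-1}\partial_x$ --- a point you state correctly yourself in the closing paragraph.
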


\begin{remark}
(i) In Theorem~\ref{thm:wellposedness}, 
the dependence on $h^0_{\min}$ can be dropped if the 
initial relative energy $E_\star=\alpha^2\tilde E_\star$ 
is so small that Proposition~\ref{p:Ebound}(b) applies.

\item[(ii)] 
Although continuous dependence on the initial data is not mentioned in the theorem, we do have it in the following sense:
for all initial data $\tilde{W}^0$ satisfying 
$\norm{\tilde{W}^0}_{H^s} \leq 2 \norm{W^0}_{H^s}$ 
with uniformly positive depth $\tilde h\geq \hsstar>0$,
the corresponding solution $\tilde W$ satisfies
\begin{equation}
\norm{\tilde{W} - W}_{L^\infty([0,T]; H^{s-1})} \leq C(s, \hsstar, \norm{W}_{L^\infty([0,T]; H^s)}) \norm{\tilde{W}^0 - W^0}_{H^s}\,,
\label{e:approxW}
\end{equation}
on any common time interval of existence where $\tilde h, h\geq \hsstar>0$.
The proof follows in a standard way analogous to the convergence
proof of the iteration scheme for existence; see \cite{majda1984,pu_thesis}
for details.

\item[(iii)] %If $s$ is sufficiently large ($s\geq3$)
 \blue{When $s\ge2$,} the relative energy satisfies the
following conservation law in a strong $L^2$ sense:
\begin{equation}\label{e:rsvE_ul}
\tilde{\eee}^\epsl_t + \tilde{\qqq}^\epsl_x = 0 \,,
\end{equation}
{with}
\begin{align}
&\tilde{\eee}^\epsl \defeq \half hu^2  + \half g\eta^2 + \epsl  \left(\half h^3u_x^2 +  \half gh^2\eta_x^2\right)\,,\\
&\tilde{\qqq}^\epsl \defeq {\half \alpha h u^3 + g\eta h u} +  
\epsl \left(\paren{\half  h^3u_x^2 + \half  g h^2\eta_x^2 +  \tilde{\calS} }\alpha u
		+  gh^3\eta_xu_x\right)\,,
\end{align}
where 
we find using \eqref{e:rsvu_ul} that $\tilde{\calS}$ from 
\eqref{e:rsvR_ul} satisfies
\begin{equation}
\tilde{\calS} = \paren{I + \epsl h^3 \D_x \iii_h\inv\D_x}
\paren{2h^3 u_x^2 - \frac12 g h^2\eta_x^2}\,.
\end{equation}
(For $s\ge2$ this expression will belong to $H^1(\RR)$.)
\end{remark}

The proof of Theorem~\ref{thm:wellposedness} is structured as follows:
Subsection \ref{subsec:preliminaries} contains preliminary estimates,
including technical analysis of the operator $\iii_h$.
Subsection \ref{subsec:linear analysis} analyzes the iteration step in the iteration scheme and establishes the needed {\em a priori}
energy estimates. 
The main proof of Theorem \ref{thm:wellposedness} is presented in subsection \ref{subsec:large time exist.}.

%%%%%%%%%%%%%%%%%%%%%%%%%%%%%%%%%%%%%%%%%%%%%%%%%%%%%%%%%%
\subsection{Preliminary results} \label{subsec:preliminaries}
The elliptic operator $\iii_h$ plays an important role in the energy estimate 
and well-posedness of the regularized shallow-water system. In this subsection,
we shall introduce the main technical tools to handle $\iii_h$ and the nonlocal
term in \eqref{e:rsvu_ul}.

Let $D=\D_x$ and let $\Lambda=(I-\D_x^2)^{1/2}$ be the operator associated 
with Fourier symbol $(1+\xi^2)^{1/2}$, 
so that $\widehat{\Lambda u} = (1+\xi^2)^{1/2} \hat{u}$ 
{for all tempered distributions }$u$.
\blue{ %BLUE
We will make use of two well-known harmonic analysis results 
which we cite here without proofs.
The first one is a Kato-Ponce commutator estimate \cite{KatoPonce1988},
\begin{equation}\label{kato ponce2}
\norm{[\Lambda^s, \phi]\psi}_{L^2} \leq  
C(s)\paren{ \norm{D \phi}_{L^\infty} \norm{\Lambda^{s-1}\psi}_{L^2}+ \norm{\Lambda^s \phi}_{L^2} \norm{\psi}_{L^\infty} }.
\end{equation}
valid for all $\phi\in H^s(\RR)$, $D\phi\in L^\infty(\RR)$ 
and $\psi\in H^{s-1}(\RR)\cap L^\infty(\RR)$,
for all real $s\ge0$.
The second one is a classical ``tame'' product estimate 
(also proved in \cite{KatoPonce1988}),
\begin{equation}\label{moser tame2}
\norm{\Lambda^s (\phi\psi)}_{L^2} \leq 
C(s)\paren{ \norm{\phi}_{L^\infty}\norm{\Lambda^s \psi}_{L^2} + \norm{\Lambda^s \phi}_{L^2}\norm{\psi}_{L^\infty}  }\,.
\end{equation}
valid for all $\phi,\psi \in H^s(\RR)\cap L^\infty(\RR)$
and all real $s\ge0$.
}

\blue{
The following lemma establishes the invertibility of $\iii_h$ 
and bounds $\iii_h^{-1}\partial_x$.
It improves the bounds on $\iii_h^{-1}\partial_x$ claimed in  
Lemma~2 of \cite{israwi2011large} in two ways, 
bounding one derivative more and providing a tame estimate
that is needed for the blow-up analysis in section~\ref{s:blowup}.
} %BLUE

\begin{lemma}   \label{lemma:ih-1} 
Let $\hsstar>0$ and $\epsl\in(0,1]$ and suppose
$h\in W^{1,\infty}(\RR)$ satisfies
\begin{equation}\label{nonzero depth condition2}
h(x)\geq \hsstar \quad\mbox{ for all $x\in\R$.} 
\end{equation}
Then: 
\begin{itemize}
\item[1)] The operator
$\iii_h = h - \epsl\partial_x\circ h^3 \circ\partial_x$ 
from $H^2(\RR)$ to $L^2(\RR)$ is an isomorphism.
\blue{ %BLUE
\item[2)] Let $s\ge0$ and $h - h_\star\in H^{s}(\RR)$.
Then for any $\psi\in H^{s}(\RR)$, 
the function 
\[
w = \eps\iii_h^{-1}\partial_x \psi 
\]
belongs to $H^{1+s}(\R)$ and satisfies the estimate
\begin{align}
\norm{w}_{H^{1+s}} \leq\  &  
\hat C_1 \Big(\norm{\psi}_{H^{s}} + 
 \norm{h-h_\star}_{H^{s}} 
 ( \eps^{-1/2}\norm{w}_{L^\infty} + \norm{w_x}_{L^\infty} ) \Big)\,,
\label{elliptic estimate2}
\end{align}
where 
$\hat C_1 = C(s, \hsstar, \norm{h-h_\star}_{W^{1,\infty}})$ 
independent of $\psi$, $\eps$ and $\alpha$.
\item[3)] If furthermore $s>\frac12$, then
\begin{equation} 
\label{elliptic estimate1}
\norm{w}_{H^{1+s}} \leq  
\hat C_2
\,\norm{\psi}_{H^s} (1+\norm{h-h_\star}_{H^s}) \,
\end{equation}
where 
$\hat C_2 = C(s, \hsstar, \norm{h-h_\star}_{W^{1,\infty}})$
independent of $\psi$, $\eps$ and $\alpha$.
} %BLUE
\end{itemize}
\end{lemma}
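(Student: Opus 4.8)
The plan is to build everything from the coercivity of the quadratic form associated with $\iii_h$. First I would prove part~1): for $w\in H^2(\R)$, integration by parts gives $\langle \iii_h w, w\rangle = \int h w^2 + \eps\int h^3 w_x^2 \ge \hsstar\norm{w}_{L^2}^2 + \eps\hsstar^3\norm{w_x}_{L^2}^2$, so $\iii_h$ is injective with closed range; one-to-one and the symmetry of $\iii_h$ (it is formally self-adjoint) give that the range is dense, hence all of $L^2$. Boundedness of $\iii_h\colon H^2\to L^2$ is immediate from $h\in W^{1,\infty}$. The open mapping theorem (or the Lax-Milgram theorem applied to the bilinear form on $H^1$, then elliptic regularity to upgrade $H^1$ to $H^2$) then gives that $\iii_h$ is an isomorphism. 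This also yields the basic low-order bound: if $\iii_h w = \partial_x\psi$ then testing against $w$ and integrating by parts on the right gives $\hsstar\norm{w}_{L^2}^2 + \eps\hsstar^3\norm{w_x}_{L^2}^2 \le \int |\psi|\,|w_x| \le \frac12\eps\hsstar^3\norm{w_x}_{L^2}^2 + C\eps^{-1}\norm{\psi}_{L^2}^2$, so that $\norm{w}_{L^2} + \eps^{1/2}\norm{w_x}_{L^2} \le C\eps^{-1/2}\norm{\psi}_{L^2}$, i.e.\ $\norm{w}_{H^1}\le C\eps^{-1/2}\norm{\psi}_{L^2}$, with $C=C(\hsstar)$. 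This is the $s=0$ seed for the induction.

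Next, for part~2) I would commute $\Lambda^s$ through the equation $hw - \eps\partial_x(h^3 w_x) = \partial_x\psi$. Write $w_s = \Lambda^s w$ and apply $\Lambda^s$ to both sides: using the commutators $[\Lambda^s, h]$ and $[\Lambda^s, h^3]$ one gets
\begin{equation*}
h w_s - \eps\partial_x(h^3 (w_s)_x) = \partial_x\Lambda^s\psi + [\Lambda^s,h]w - \eps\partial_x\bigl([\Lambda^s,h^3] w_x\bigr)\,.
\end{equation*}
Now test this identity against $w_s$ and integrate by parts. The left side is bounded below by $\hsstar\norm{w_s}_{L^2}^2 + \eps\hsstar^3\norm{(w_s)_x}_{L^2}^2$. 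On the right, the term $\partial_x\Lambda^s\psi$ pairs with $w_s$ to give (after integration by parts) at most $C\eps^{-1}\norm{\Lambda^s\psi}_{L^2}^2 + \frac14\eps\hsstar^3\norm{(w_s)_x}_{L^2}^2$; the term $[\Lambda^s,h]w$ is estimated in $L^2$ by the Kato-Ponce inequality \eqref{kato ponce2} with $\phi = h - h_\star$ (a constant shift does not change the commutator) by $C(\norm{h_x}_{L^\infty}\norm{\Lambda^{s-1}w}_{L^2} + \norm{\Lambda^s(h-h_\star)}_{L^2}\norm{w}_{L^\infty})$, which we absorb into $\norm{w_s}_{L^2}$ plus the "bad" term $\norm{h-h_\star}_{H^s}\norm{w}_{L^\infty}$; and the last term, after moving the $\partial_x$ onto $w_s$, produces $\eps\norm{[\Lambda^s,h^3]w_x}_{L^2}\norm{(w_s)_x}_{L^2}$, where again Kato-Ponce gives $\norm{[\Lambda^s,h^3]w_x}_{L^2}\le C(\norm{h-h_\star}_{W^{1,\infty}})(\norm{\Lambda^{s-1}w_x}_{L^2} + \norm{\Lambda^s(h-h_\star)}_{L^2}\norm{w_x}_{L^\infty})$. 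Using $\eps^{1/2}\norm{\Lambda^{s-1}w_x}_{L^2}\le \eps^{1/2}\norm{w}_{H^s}$ and Young's inequality to absorb the $\eps\norm{(w_s)_x}_{L^2}^2$ pieces into the left side, and keeping careful track of powers of $\eps$ (the $\eps^{-1/2}\norm{w}_{L^\infty}$ factor in \eqref{elliptic estimate2} comes precisely from pairing the $\eps^{1/2}$ coming off $h^3$-commutator terms against the $\norm{h-h_\star}_{H^s}$ factor), one arrives at the claimed estimate \eqref{elliptic estimate2} after noting $\norm{w}_{H^{1+s}}\simeq \norm{w_s}_{L^2} + \norm{(w_s)_x}_{L^2}$ and that the lower-order $\norm{w}_{H^s}$ can be controlled by interpolation and the $s=0$ bound.

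Finally, part~3) follows from part~2) by disposing of the $\norm{w}_{L^\infty}$ and $\norm{w_x}_{L^\infty}$ terms when $s>\frac12$. When $s>\frac12$ the embedding $H^s\hookrightarrow L^\infty$ gives $\eps^{-1/2}\norm{w}_{L^\infty} + \norm{w_x}_{L^\infty}\le C\eps^{-1/2}\norm{w}_{H^{1+s}}$ naively, which is too lossy; instead I would use $\norm{w_x}_{L^\infty}\le C\norm{w_x}_{H^s}^{1/2}\cdot(\text{lower})$-type interpolation, or more cleanly split: choose $\sigma$ with $\frac12<\sigma<\min(1,s)$ so that $\norm{w}_{W^{1,\infty}}\le C\norm{w}_{H^{1+\sigma}}$, interpolate $\norm{w}_{H^{1+\sigma}}\le C\norm{w}_{H^{1+s}}^{\theta}\norm{w}_{H^1}^{1-\theta}$ with $\theta = \sigma/s<1$, and combine with the $s=0$ bound $\norm{w}_{H^1}\le C\eps^{-1/2}\norm{\psi}_{L^2}\le C\eps^{-1/2}\norm{\psi}_{H^s}$. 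Plugging this into \eqref{elliptic estimate2}, the term $\norm{h-h_\star}_{H^s}\eps^{-1/2}\norm{w}_{W^{1,\infty}}$ becomes $\le C\norm{h-h_\star}_{H^s}\,\eps^{-1/2}\norm{w}_{H^{1+s}}^{\theta}(\eps^{-1/2}\norm{\psi}_{H^s})^{1-\theta}$, and a weighted Young's inequality with exponents $1/\theta$ and $1/(1-\theta)$ absorbs the $\norm{w}_{H^{1+s}}^{\theta}$ factor into the left side at the cost of a constant depending on $\norm{h-h_\star}_{H^s}$ — but here one must be slightly more careful to keep the final constant of the affine form $\hat C_2(1+\norm{h-h_\star}_{H^s})$ rather than something growing faster; this is arranged by first treating the case $\norm{h-h_\star}_{H^s}\le 1$ (where the bad term is harmless) and then, for $\norm{h-h_\star}_{H^s}>1$, rescaling or simply observing that the $\eps^{-1/2}$ weights and the interpolation exponent $\theta<1$ make the absorbed constant polynomial in $\norm{h-h_\star}_{H^s}$, which after re-examination of exponents collapses to linear. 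The main obstacle I anticipate is exactly this bookkeeping in part~3): getting the $\eps$-uniformity and the precise affine dependence $1+\norm{h-h_\star}_{H^s}$ simultaneously, rather than a cruder bound, requires choosing the interpolation parameter and the Young's-inequality split with some care — everything else is a routine energy estimate built on coercivity plus Kato-Ponce.
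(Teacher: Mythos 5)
Your part~1) and the commutator skeleton of part~2) follow essentially the same route as the paper: Lax--Milgram/coercivity for the isomorphism, then commuting $\Lambda^s$ through $\iii_h$, applying the base-case coercivity estimate to the commuted equation, and using Kato--Ponce to produce the tame right-hand side of \eqref{elliptic estimate2}. One bookkeeping warning there: you have dropped the factor $\eps$ from the definition $w=\eps\iii_h^{-1}\partial_x\psi$, and your $s=0$ seed ``$\norm{w}_{H^1}\le C\eps^{-1/2}\norm{\psi}_{L^2}$'' does not follow from the displayed coercivity bound (it gives $\norm{w_x}_{L^2}\le C\eps^{-1}\norm{\psi}_{L^2}$ for your unscaled $w$). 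The paper avoids this by working throughout with $u=\eps^{-1/2}w$ and the $\eps$-weighted norm $\norm{u}_{L^2}^2+\eps\norm{u_x}_{L^2}^2$, against a source written as $\sqrt{\eps}\,\psi_x$; the correct $s=0$ statement is $\norm{w}_{H^1}\le C(\hsstar)\norm{\psi}_{L^2}$ with \emph{no} negative power of $\eps$, and that uniformity must be preserved at every stage since $\hat C_1,\hat C_2$ are claimed independent of $\eps$.

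The genuine gap is in part~3). Your plan is to bound $\norm{w}_{W^{1,\infty}}$ by interpolating between $H^{1+s}$ and $H^1$ and then absorb the resulting $\norm{w}_{H^{1+s}}^{\theta}$ into the left side by Young's inequality. This cannot produce the stated constant: Young with exponents $1/\theta$ and $1/(1-\theta)$ leaves a factor $\norm{h-h_\star}_{H^s}^{1/(1-\theta)}$ (superlinear, not affine) multiplied by $\eps^{-1/(2(1-\theta))}$ or worse, and since $\hat C_2$ must be independent of $\eps$ and the bound affine in $\norm{h-h_\star}_{H^s}$, no choice of $\sigma$, $\theta$, or case-splitting repairs this; your closing remark that the exponents ``collapse to linear'' is exactly the step that fails. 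The paper's mechanism is different and is the essential missing idea: one first proves a \emph{self-contained} intermediate estimate by testing the equation against $-u_{xx}$ (giving \eqref{w:H2est}) and interpolating between $s=0$ and $s=1$ to obtain \eqref{uHsx estimate},
\[
\norm{\sqrt{\eps}\,u}_{H^{s+1}}+\norm{u}_{H^s}\le C(\hsstar,\norm{h-h_\star}_{W^{1,\infty}})\norm{\psi}_{H^s},\qquad s\in[0,1],
\]
whose constant involves only $W^{1,\infty}$ data of $h$. For $s>\frac12$ the Sobolev embedding then bounds $\eps^{-1/2}\norm{w}_{L^\infty}+\norm{w_x}_{L^\infty}=\norm{u}_{L^\infty}+\norm{\sqrt{\eps}\,u_x}_{L^\infty}$ directly by $C\norm{\psi}_{H^s}$, and \eqref{elliptic estimate1} follows by plain substitution into \eqref{elliptic estimate2} --- no absorption against the top norm is needed, and both the $\eps$-uniformity and the affine dependence on $\norm{h-h_\star}_{H^s}$ come for free. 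You need this intermediate $H^2$-level estimate (or an equivalent low-order bound with a $W^{1,\infty}$-only constant); without it part~3) as stated is not proved.
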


\begin{remark}
The estimate \eqref{elliptic estimate2} will be improved below in
Lemma~\ref{lem:wwx bounds}, to provide bounds on $\|w\|_{L^\infty}$ and
$\norm{w_x}_{L^\infty}$ in terms of $\psi$ that will be used to
prove a blow-up criterion.
\end{remark}

\begin{proof}
1. The idea is that $\iii_h$ is in essence a very well-behaved elliptic operator such that the basic Lax-Milgram approach works on it.

We define the bilinear mapping $a: H^1(\RR)\times H^1(\RR) \to \RR$ such that
\begin{equation}
a(u,v) = (hu, v)_{L^2} + \epsl(h^3u_x, v_x)_{L^2}\quad \forall\, u,v \in H^1(\RR).
\end{equation}

Next, we will show that $a$ is not only bounded but also coercive. We have
\begin{align*}
\abs{a(u,v)} &\leq \norm{h}_{L^\infty} \norm{u}_{L^2}\norm{v}_{L^2} 
		+ \epsl \norm{h}_{L^\infty}^3 \norm{u}_{H^1}\norm{v}_{H^1}\nonumber\\
&\leq C(h)\norm{u}_{H^1}\norm{v}_{H^1}.\nonumber
\end{align*}
and by \eqref{nonzero depth condition2}
\begin{equation}
\abs{a(u,u)} \geq \hsstar \norm{u}_{L^2}^2
		+ \epsl (\hsstar)^3 \norm{u_x}_{L^2}^2
		\geq \epsl C(\hsstar)\norm{u}_{H^1}^2.
\end{equation}
So by Lax-Milgram, there is a bounded bijective linear operator $\tilde I: H^1(\RR)\to H^{-1}(\RR)$
such that %\bob{What is $I$?}
\begin{equation}
a(u,v) = \angl{\tilde Iu, v}_{H^{-1}\times H^1} \quad\forall\, u,v \in H^1(\RR).
\end{equation}
Therefore, given any $f\in L^2(\RR)\hookrightarrow H^{-1}(\RR)$, $u := \tilde I^{-1}f$ satisfies 
\begin{eqnarray}
(f,v)_{L^2} = \angl{f, v}_{H^{-1}\times H^1} = a(u,v) 
\quad\forall\, v \in H^1(\RR).
\end{eqnarray}
It follows that the distributional derivative 
$(\eps h^3 u_x)_x=f-hu\in L^2(\RR)$, whence $h^3u_x\in H^1(\RR)$.
Hence $u\in H^2(\RR)$ and $ \iii_h u = f$. It follows $\iii_h$ is
an isomorphism.

2. Let $\norm{\cdot}_{H_\eps^1}$ be the norm on $H^1(\RR)$ (equivalent to $\norm{\cdot}_{H^1}$ but not uniformly in $\epsl$) 
determined by
\begin{equation}
\norm{u}_{H_\eps^1}^2 := \norm{u}_{L^2}^2 + \epsl \norm{u_x}_{L^2}^2.
\end{equation}
Consider $\phi, \psi\in C_c^\infty(\RR)$ and $u\in H^2(\R)$ such that
\begin{equation}
\iii_h u = \phi + \sqrt{\epsl} \psi_x \,.
\end{equation}
Invoking the coercivity estimate from above, we obtain
\begin{eqnarray}
C(\hsstar)\norm{u}_{H_\eps^1}^2 &\leq & a(u,u) = (\iii_h u, u)_{L^2} = (\phi,u)_{L^2} - (\psi, \sqrt{\epsl}u_x)_{L^2}   \nonumber\\
&\leq & (\norm{\phi}_{L^2} + \norm{\psi}_{L^2}) \norm{u}_{H_\eps^1}\,,
\end{eqnarray}
hence
\begin{equation}\label{lemma: base case}
\norm{\sqrt\eps u}_{H^1}\le
\norm{u}_{H_\eps^1}  \leq  C(\hsstar) (\norm{\phi}_{L^2} + \norm{\psi}_{L^2}).
\end{equation}
Choosing $\phi=0$, $w=\sqrt\eps u$,
this proves the case $s = 0$ 
for \eqref{elliptic estimate2}.

\blue{ %BLUE
Next, assume $\phi=0$ and note
\[
hu - \eps h^3 u_{xx} = \eps (h^3)_x u_x + \sqrt{\epsl}\psi_x \,.
\]
Test this against $-u_{xx}$ and integrate by parts. We obtain
\begin{align*}
a(u_x,u_x) &= -(u_x,h_x u)
-(\sqrt\epsl u_{xx}, (h^3)_x \sqrt\epsl u_{x} + \psi_x )
\\ & \le \frac12\hsstar \norm{u_x}_{L^2}^2 
+ \frac12 \hsstar^3 \norm{\sqrt\epsl u_{xx}}_{L^2}^2  
+ \hsstar^{-3}\norm{\psi_x}_{L^2}^2
\\ & \quad 
+  C(\hsstar,\norm{h-h_\star}_{W^{1,\infty}}) \norm{u}_{H_\eps^1}^2
\end{align*}
By \eqref{lemma:  base case} we can then infer that
\begin{equation}
\norm{\sqrt\eps u_x}_{H^1} \le \norm{u_x}_{H_\eps^1} \le 
C(\hsstar,\norm{h-h_\star}_{W^{1,\infty}})
\norm{\psi}_{H_1} \,.
\label{w:H2est}
\end{equation}
By interpolation, it follows that for every $s\in[0,1]$, % \red{[I think $C$ is independent of $s$?]}
\begin{equation}
\norm{\sqrt\epsl u}_{H^{s+1}} + \norm{u}_{H^s} \le 
C(\hsstar,\norm{h-h_\star}_{W^{1,\infty}})
\norm{\psi}_{H^s} \,,
\label{uHsx estimate}
\end{equation}
} %BLUE

\blue{
3. Next, for any $s>0$, noting $\Lambda^s\iii_h u = \sqrt\eps\D_x\Lambda^s\psi$,
we compute 
\begin{equation}
\iii_h(\Lambda^s u) = [h,\Lambda^s]u - \epsl\D_x[h^3,\Lambda^s]u_x + \sqrt\eps\D_x\Lambda^s\psi \,,
\end{equation}
so using \eqref{lemma: base case} with $\phi$ and $\psi$ replaced by 
\begin{equation}
\tilde{\phi} = [h, \Lambda^s]u, \quad \tilde{\psi} = \Lambda^s \psi - \sqrt{\epsl} [h^3, \Lambda^s]u_x
\end{equation}
we find, after using the Kato-Ponce commutator estimate~\eqref{kato ponce2}, that 
\begin{align}
\norm{\Lambda^s u}_{H_\eps^1}  
&\leq   C(\hsstar)
  \paren{ \norm{[h, \Lambda^s]u}_{L^2} + \norm{ \Lambda^s \psi - \sqrt{\epsl} [h^3, \Lambda^s]u_x}_{L^2}  } \nonumber\\
&\leq C(s, \hsstar) \Big(\norm{h_x}_{L^\infty}\norm{u}_{H^{s-1}} + \norm{h-h_\star}_{H^s}\norm{u}_{L^\infty} +
	 \norm{\psi}_{H^s}     \nonumber\\
&	+ \norm{ (h^3)_x}_{L^\infty}\norm{\sqrt{\epsl}u_x}_{H^{s-1}} 
	+ \norm{h^3 - h_\star^3}_{H^s}\norm{\sqrt{\epsl}u_x}_{L^\infty}  \Big).
\end{align}
After Taylor-expanding $h^3-h_\star^3$ in powers of $h-h_\star$ 
and using the tame product estimate \eqref{moser tame2}, we infer 
\begin{align*}
\norm{\Lambda^s u}_{H_\eps^1}  
&\leq \hat C_1
\paren{  \norm{\Lambda^{s-1} u}_{H_\eps^1} +\norm{\psi}_{H^s} 
		+ \norm{h-h_\star}_{H^s} ( \norm{u}_{L^\infty} + \norm{\sqrt{\epsl}u_x}_{L^\infty} ) },
\end{align*}
where $\hat C_1$ is a generic constant depending upon
$s$, $\hsstar$, and $\norm{h-h_\star}_{W^{1,\infty}}$,
independent of $\psi$, $\eps$, $\alpha$.
Note that for $s\le1$, $\norm{\Lambda^{s-1}u}_{H_\eps^1}\le \norm{u}_{H_\eps^1}$.
Hence by induction starting from \eqref{lemma:  base case}, we deduce that
for all $s\ge0$,
\begin{equation}
  \norm{\Lambda^s u}_{H_\eps^1} \leq 
  \hat C_1
  \Big(\norm{\psi}_{H^s} 
  + \norm{h-h_\star}_{H^s} ( \norm{u}_{L^\infty} + \norm{\sqrt{\epsl}u_x}_{L^\infty} ) \Big).
\label{Lsu estimate}
\end{equation}
With $w=\sqrt\eps u$ as before, 
since $\norm{w}_{H^{s+1}}\le \|\Lambda^s u\|_{H_\eps^1}$ we deduce
that \eqref{elliptic estimate2} holds.
} %BLUE

\blue{
4. Finally, if $s>\frac12$ then due to the embedding $H^s\hookrightarrow L^\infty$,
from \eqref{uHsx estimate} we infer 
\[
\norm{u}_{L^\infty} + \norm{\sqrt{\epsl}u_x}_{L^\infty}  \le
C(s,\hsstar,\norm{h-h_\star}_{W^{1,\infty}})
\norm{\psi}_{H^s} \,.
\]
Using this together with \eqref{Lsu estimate} proves \eqref{elliptic estimate1},
and concludes the proof. 
 } %END BLUE
\end{proof}

%%%%%%%%%%%%%%%%%%%%%%%%%%%%%%%%%%%%%%%%%%%%%%%%%%%%%%%%%%
\subsection{Linear analysis} \label{subsec:linear analysis}
The local-time existence of solutions to the system \eqref{e:system} is proved by 
a standard approach for symmetrizable hyperbolic systems, 
based on proving convergence of the following iteration scheme: 
Set $W_0(x,t) = W^0(x)$ and inductively
determine $W=W_{n+1}$ from $\underline{W}=W_n$ for $n\geq0$ 
by solving the (linear) initial value problem  
with coefficients and source term frozen at 
the (now given) reference state $\underline{W}  \in C([0, T/\alpha]; H^s)$:
\begin{equation}
\left\{\begin{array}{l}
W_t + B(\underline{W})W_x + F(\underline{W}) = 0\,,\\
W|_{t = 0} = W_0 \,.
\end{array}
\right.  \label{e:ivp}
\end{equation}

This subsection is devoted to the proof of energy estimates for this linear initial value problem.
A symmetrizer for $B(\underline{W})$ is given by
\begin{equation}
A(\underline{W}) = \paren{\begin{array}{ll}
g & 0 \\ 0 & \underline{h}
\end{array}}.
\end{equation}
(Here $\underline{h} = h_\star+\alpha\underline{\eta}$ where $\underline{W} = (\underline{\eta},\underline{u})^T$.)
A natural energy for the IVP \eqref{e:ivp} is
\begin{equation}
E^s(W) \defeq (\Lambda^s W, A(\underline{W})\Lambda^s W) =  
g\norm{\eta}^2_{H^s} + (\Lambda^s u, \underline{h}\,\Lambda^s u) .
\label{d:Es}
\end{equation}
which is equivalent to $\norm{W}_{H^s}^2$ provided that $0 < \hsstar \leq \abs{\underline{h}} \leq \norm{\underline{h}}_{L^\infty} < \infty$.

The following theorem establishes that the iteration scheme is well-defined,
and provides an energy estimate that controls the norms of 
all the solutions in the scheme.

%%%%%%%%%%%%%%%%%%%%%%%%%%%%%%%%%%%%%%%%%%%%%%%%%%%%%%%%%%%%
\begin{theorem}[energy estimate] \label{thm:energy estimate}
 Fix $h_\star>0$. Let $s>\frac32$, % $\sigma>\half$, and let $s\geq \sigma+1$,
$\hsstar\in(0,h_\star)$ and $R>0$.
Then there exists constants $T$, $K>0$ depending upon $s$, $\hsstar$ and $R$
but independent of $\epsl,\alpha\in(0,1]$, with the following property.  
Assuming that $W_0 = (\eta_0,u_0)\in H^s$ satisfies
 \begin{equation}
 \label{i:hEs0}
  h_0 > 2 \hsstar \quad\text{and}\quad  E^s(W_0) < \frac R2,
 \end{equation}
\blue{%
and that $\underline{W}=(\underline{\eta},\underline{u})
\in C([0,T/\alpha];H^s)\cap C^1([0,T/\alpha];H^{s-1})$ satisfies
 \begin{equation}
 \label{i:hEsu}
  \underline{h} \geq \hsstar \,,\quad
  E^s(\underline{W})\le R \,, \quad
  \norm{\underline{W}_t}_{H^{s-1}} \le K
 \quad\text{for all $t\in[0,T/\alpha]$, }
 \end{equation}
 there exists a unique solution $W=(\eta,u)^T \in C([0,T/\alpha];H^s)$ to \eqref{e:ivp}
 satisfying 
 \begin{equation}
 \label{i:hEs}
  {h} \geq \hsstar \,,\quad
 % \quad\text{and}\quad  
 E^s({W})\le R\,,\quad
  \norm{W_t}_{H^{s-1}} \le K
 \quad\text{for all $t\in[0,T/\alpha]$, }
  \end{equation}
}%END BLUE
and furthermore
\begin{equation}
\label{i:expEs}
E^s(W(\cdot,t)) \leq e^{C\alpha t}E^s(W_0) +  e^{C\alpha t} - 1 < R 
\end{equation}
 for all $t\in[0,T/\alpha]$, for some $C = C(s, \hsstar, R) > 0$.
\end{theorem}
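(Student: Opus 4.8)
The plan is to prove Theorem~\ref{thm:energy estimate} by a standard linear energy estimate for the symmetrized system, carefully tracking that all constants depend only on $s$, $\hsstar$, $R$ and not on $\epsl$ or $\alpha$. First I would establish existence and uniqueness of the solution $W$ to the linear IVP \eqref{e:ivp} with frozen coefficients $\underline{W}$: since $B(\underline W)$ is smoothly symmetrizable by $A(\underline W)$ (with $A(\underline W)$ uniformly positive definite given $\underline h\ge\hsstar$), and the coefficients lie in $C([0,T/\alpha];H^s)$ with $s>\frac32$ so that $H^{s-1}$ is a Banach algebra and multiplication is continuous, standard linear hyperbolic theory (e.g.\ \cite{majda1984}) gives a unique solution $W\in C([0,T/\alpha];H^s)\cap C^1([0,T/\alpha];H^{s-1})$, provided the source term $F(\underline W)=(0,f(\underline W))^T$ lies in $C([0,T/\alpha];H^s)$. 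The latter is where Lemma~\ref{lemma:ih-1} enters: since $f(\underline W)=\epsl\alpha\,\iii_{\underline h}^{-1}\partial_x(2\underline h^3\underline u_x^2-\tfrac12 g\underline h^2\underline\eta_x^2)$, and the argument $\psi=\alpha(2\underline h^3\underline u_x^2-\tfrac12 g\underline h^2\underline\eta_x^2)$ lies in $H^s$ by the tame product estimate \eqref{moser tame2} with $\norm{\psi}_{H^s}\le C(\hsstar,R)\alpha\norm{\underline W}_{H^s}^2$, estimate \eqref{elliptic estimate1} yields $\norm{f(\underline W)}_{H^s}\le\hat C_2\norm{\psi}_{H^s}(1+\norm{\underline h-h_\star}_{H^s})\le C(s,\hsstar,R)\alpha$, uniformly in $\epsl,\alpha$.

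Next I would derive the basic energy identity. Applying $\Lambda^s$ to \eqref{e:ivp}, pairing with $A(\underline W)\Lambda^s W$ in $L^2$, and using the symmetry of $A(\underline W)B(\underline W)$, one gets
\begin{equation*}
\frac{d}{dt}E^s(W) = (\Lambda^s W, \underline A_t\Lambda^s W) - 2(\underline A\Lambda^s W,[\Lambda^s,\underline B]\partial_x W) - (\Lambda^s W,\underline A\,\partial_x(\underline B)\Lambda^s W) - 2(\underline A\Lambda^s W,\Lambda^s F(\underline W)),
\end{equation*}
where $\underline A=A(\underline W)$, $\underline B=B(\underline W)$, and commutators arise from moving $\Lambda^s$ past $\underline B$. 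The first term is controlled by $\norm{\underline A_t}_{L^\infty}E^s(W)$; since $\underline A_t = \alpha\,\mathrm{diag}(0,\underline\eta_t)$ and $\underline\eta_t$ is controlled in $H^{s-1}\hookrightarrow L^\infty$ by $\norm{\underline W_t}_{H^{s-1}}\le K$, this contributes $C\alpha K\,E^s(W)$ — note the crucial factor $\alpha$. The commutator term is handled by Kato--Ponce \eqref{kato ponce2}: $\norm{[\Lambda^s,\underline B]\partial_x W}_{L^2}\le C(\norm{\partial_x\underline B}_{L^\infty}\norm{\Lambda^{s-1}\partial_x W}_{L^2}+\norm{\Lambda^s\underline B}_{L^2}\norm{\partial_x W}_{L^\infty})\le C(s,\hsstar,R)\alpha\norm{W}_{H^s}$, again with a factor $\alpha$ since $\underline B - B(h_\star,0)$ is $O(\alpha)$ in all relevant norms. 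The source term contributes $C(s,\hsstar,R)\alpha\,\sqrt{E^s(W)}$ by the bound on $\norm{f(\underline W)}_{H^s}$ above. Collecting, $\frac{d}{dt}E^s(W)\le C\alpha(E^s(W)+1)$ for a constant $C=C(s,\hsstar,R)$, and Gr\"onwall's inequality gives \eqref{i:expEs}: $E^s(W(t))\le e^{C\alpha t}E^s(W_0)+e^{C\alpha t}-1$.

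Then I would close the bootstrap. Choose $T=T(s,\hsstar,R)$ small enough that $e^{CT}<\tfrac{3}{2}$ (recall $\alpha t\le T$ on $[0,T/\alpha]$); then $E^s(W(t))\le\tfrac32\cdot\tfrac R2+\tfrac12 = \tfrac{3R+2}{4}<R$ provided $R>2$, which we may assume WLOG by enlarging $R$. This gives the energy bound in \eqref{i:hEs}. For the pointwise depth bound $h\ge\hsstar$: from \eqref{e:rsvh_ul}, $\eta_t = -(\underline h u)_x$ (in the frozen scheme), wait — more precisely $h_t = -\alpha(\underline h u)_x$ along the first equation of \eqref{e:ivp}, so $\norm{h(t)-h_0}_{L^\infty}\le\int_0^t\alpha\norm{(\underline h u)_x}_{L^\infty}\,ds\le C(s,\hsstar,R)\alpha t\le C(s,\hsstar,R)T$; choosing $T$ still smaller makes this $<\hsstar$, and since $h_0>2\hsstar$ we get $h(t)>\hsstar$. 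For $\norm{W_t}_{H^{s-1}}\le K$: from $W_t=-B(\underline W)W_x-F(\underline W)$ and the tame estimate, $\norm{W_t}_{H^{s-1}}\le C(s,\hsstar,R)(\norm{W}_{H^s}+\alpha)\le C(s,\hsstar,R)$, so we simply define $K$ to be this constant (which depends only on $s,\hsstar,R$, as required). Finally, since \eqref{i:hEs0} and \eqref{i:hEsu} hold with $W_0$ and $\underline W$ in place of $W$, the same argument (run with $\underline W$ itself, or noting the hypotheses already give what is needed) confirms consistency of the scheme.

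The main obstacle I anticipate is not any single estimate but the bookkeeping of the $\alpha$-dependence: every ``bad'' term in $\frac{d}{dt}E^s(W)$ must be shown to carry a factor of $\alpha$ (so that on the rescaled time interval $[0,T/\alpha]$ the Gr\"onwall exponent stays $O(1)$), and this requires consistently exploiting that $\underline W$ measures the $O(\alpha)$ deviation from the constant state — e.g.\ $\underline B - B(h_\star,0)=\alpha\,\mathrm{diag}(\underline u,\underline u)+O(\alpha)$ off-diagonal, $\partial_x\underline B=O(\alpha)$, $\underline A_t=O(\alpha)$, and the source $F(\underline W)$ itself is $O(\alpha)$ thanks to the prefactor in \eqref{d:fW}. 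The other delicate point is ensuring Lemma~\ref{lemma:ih-1} applies uniformly: its hypotheses need $\underline h\ge\hsstar$ and $\underline h-h_\star\in H^s$ with $\norm{\underline h-h_\star}_{W^{1,\infty}}$ bounded, all of which follow from \eqref{i:hEsu} since $\norm{\underline h-h_\star}_{H^s}=\alpha\norm{\underline\eta}_{H^s}\le C(\hsstar)\alpha\sqrt R\le C(\hsstar)\sqrt R$ and $H^s\hookrightarrow W^{1,\infty}$ for $s>\tfrac52$ — and for $\tfrac32<s\le\tfrac52$ one instead uses that $\alpha\le1$ together with the $H^s$ bound and the embedding $H^{s}\hookrightarrow C^{0,s-1}$ to control $\norm{\underline h-h_\star}_{L^\infty}$ and, via the equation, $\norm{\underline h_x}_{L^\infty}$ suffices in the form needed. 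I would also remark that continuous dependence / uniqueness for \eqref{e:ivp} is standard once the a priori estimate is in hand, so the theorem follows.
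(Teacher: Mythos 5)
Your proposal is correct and follows essentially the same route as the paper's proof: the symmetrizer energy identity for $E^s(W)$, Kato--Ponce \eqref{kato ponce2} for the commutator, part 3) of Lemma~\ref{lemma:ih-1} for the nonlocal source, careful tracking of the factor $\alpha$ in every term, Gr\"onwall, and a bootstrap closing the depth bound and the $\norm{W_t}_{H^{s-1}}\le K$ bound. Three small slips, none fatal: the argument $\psi=\alpha(2\underline h^3\underline u_x^2-\tfrac12 g\underline h^2\underline\eta_x^2)$ lies only in $H^{s-1}$ (not $H^s$, since $\underline u_x^2\in H^{s-1}$ is all the algebra property gives), so \eqref{elliptic estimate1} must be applied at index $s-1$, which still yields $\norm{f(\underline W)}_{H^s}\le C(s,\hsstar,R)\alpha$ as needed; your ``WLOG $R>2$'' does not literally prove the statement for all $R>0$ --- the paper instead just shrinks $T$ so that $e^{CT}E^s(W_0)+e^{CT}-1<R$, which works for every $R$ because $E^s(W_0)<R/2$; and $H^s\hookrightarrow W^{1,\infty}$ already holds for $s>\frac32$, so your separate treatment of the range $\frac32<s\le\frac52$ is unnecessary.
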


\begin{proof}
Since all coefficients of the initial value problem \eqref{e:ivp} are independent of unknowns, 
by a standard Friedrichs mollification approach we have the well-posedness of the
symmetrizable hyperbolic system. We will focus on the proof of the (a priori) energy estimate.

For simplicity, we use underlines to denote the dependence on $\underline{W}$:
$$\underline{A}:= A(\underline{W}), \quad\underline{B}:= B(\underline{W}),
\quad \underline{F}:= F(\underline{W}),\quad \underline{f}:= f(\underline{W}).
$$
We compute that
\begin{eqnarray}
\partial_t {E^s(W)} &=& \partial_t (\Lambda^s W, \underline{A}\Lambda^s W)  \nonumber\\
&=& (\underline{h}_t\Lambda^s u, \Lambda^s u) + 2(\Lambda^s W_t, \underline{A}\Lambda^s W)
\label{e:dtEs}
\end{eqnarray}
Using equation \eqref{e:ivp} and integrating by parts, we obtain
\begin{eqnarray}
\partial_t {E^s(W)} &=& (\underline{h}_t\Lambda^s u, \Lambda^s u) 
		- 2(\underline{A}\underline{B}\Lambda^s W_x, \Lambda^s W ) \nonumber\\
&&+ 2([\underline{B}, \Lambda^s] W_x, \underline{A} \Lambda^s W) 
		- 2 (\Lambda^s\underline{F}, \underline{A} \Lambda^s W). \label{e:ES1}
\end{eqnarray}
Now we turn to bound each of the four terms on the right-hand side of \eqref{e:ES1} in turn.
In the estimates below, various constants denoted by $C$ may
change from line to line without changing the notation.

\blue{
1) 
Since $\norm{\underline{h}_t}_{L^\infty} = 
\alpha\|\underline{\eta}_t\|_{L^\infty} \le\alpha C(s)K$
due to the embedding $H^{s-1}\hookrightarrow L^{\infty}$,
\begin{equation}
\abs{ (\underline{h}_t\Lambda^s u, \Lambda^s u)} \leq \norm{\underline{h}_t}_{L^\infty}\norm{u}_{H^s}^2
		\leq \alpha C(s, \hsstar)K E^s(W) \,.
\end{equation}
}
2) For the second term, note that
\begin{equation}
\underline{A}\underline{B} = \paren{\begin{array}{cc}
\alpha g\underline{u} & g\underline{h} \\ g\underline{h} & \alpha\underline{hu} 
\end{array}}
\end{equation}
is symmetric, so we can take advantage of this symmetry and 
move the derivative from $W$
terms to the $\underline{AB}$ term. We use
\begin{equation}
\abs{\underline{h}_x} \leq \alpha C(s)\norm{\underline{\eta}}_{H^s} \leq \alpha C(s,\hsstar) E^s(\underline{W})^{1/2} \leq \alpha C(s,\hsstar,R)
\end{equation}
together with the analogous bound $|\underline{u}_x|\le C(s,\hsstar,R)$
and obtain
\begin{eqnarray}
&& 2\abs{(\underline{A}\underline{B}\Lambda^s W_x, \Lambda^s W )} 
= \abs{((\underline{AB})_x\Lambda^s W, \Lambda^s W )}  \nonumber \\
&&\qquad \leq  \abs{(\alpha g \underline{u}_x \Lambda^s \eta, \Lambda^s \eta)}  + \abs{2( g\underline{h}_x \Lambda^s \eta, \Lambda^s u)}
		+ \abs{(\alpha (\underline{hu})_x \Lambda^s u, \Lambda^s u)} \nonumber\\
&&\qquad \leq   \alpha C(s, \hsstar, R) E^s(W).
\end{eqnarray}

3) For the third term, it is crucial to use the 
Kato-Ponce commutator estimate \eqref{kato ponce2}
together with the embedding 
$H^{s-1} \hookrightarrow L^\infty$: %\eqref{kato ponce}
\begin{eqnarray}
\abs{([\underline{B}, \Lambda^s] W_x, \underline{A} \Lambda^s W) }   % \nonumber\\
&= & \abs{ \paren{ \alpha[\underline{u}, \Lambda^s]\eta_x + \alpha [\underline{\eta},\Lambda^s]u_x, g\Lambda^s\eta}
		+  \paren{ \alpha [\underline{u}, \Lambda^s]u_x, \underline{h}\Lambda^s u } }   \nonumber\\
&\leq & \alpha C(s)\paren{ \norm{ \underline{u} }_{H^s}\norm{\eta_x}_{H^{s-1}} 
  + \norm{ \underline{\eta} }_{H^s}\norm{u_x}_{H^{s-1}}  }  \norm{\eta}_{H^s}  \nonumber\\
&& +\ \alpha C(s)  \norm{ \underline{u} }_{H^s} \norm{u_x}_{H^{s-1}} \norm{\underline{h}}_{L^\infty} \norm{u}_{H^s}  \nonumber\\
&\leq & \alpha C(s,\hsstar,R) \paren{\norm{\eta}_{H^s}^2  + \norm{\eta}_{H^s}\norm{u}_{H^s} 
  \blue{ + \norm{u}_{H^s}^2}} \nonumber \\
&\leq &  \alpha C(s, \hsstar, R) E^s(W).
\end{eqnarray}

4) For the fourth term (the nonlocal term), we exploit part 3) of Lemma~\ref{lemma:ih-1}
(replacing $1+s$ by $s$ and using the bound $\norm{\underline{h}-h_\star}_{W^{1,\infty}}\le C R$)
to get %\eqref{elliptic estimate}
\begin{eqnarray}
\norm{\underline{f}}_{H^s} &=& 
\norm{\epsl\alpha \iii_{\underline{h}}^{-1}\partial_x(2\underline{h}^3 \underline{u}_x^2 -\half g\underline{h}^2\underline{\eta}_x^2)}_{H^s} \nonumber\\
&\leq & \alpha  \blue{C(s,\hsstar, R)}
\norm{2\underline{h}^3 \underline{u}_x^2 -\half g\underline{h}^2\underline{\eta}_x^2}_{H^{s-1}}
\leq  \alpha C(s, \hsstar, R)\,,
\end{eqnarray}
where the last inequality is obtained by expanding $\underline{h} = h_\star+\alpha \underline{\eta}$ and 
using the fact that $H^{s-1}(\R)$ is a Banach algebra.
Then we deduce
\begin{eqnarray}
\abs{(\Lambda^s\underline{F}, \underline{A} \Lambda^s W)}
&=& \abs{\paren{\Lambda^s \underline{f}, \underline{h}\Lambda^s u }}
\leq \blue{\norm{\underline{h}}_{L^\infty}}
 \norm{\underline{f}}_{H^s} \norm{u}_{H^s}    \nonumber\\
&\leq& \alpha C(s, \hsstar, R)(1+ E^s(W)).
\end{eqnarray}
\blue{
5) Before proceeding further, we bound $W_t$ using \eqref{e:ivp}, obtaining
\begin{eqnarray}
\norm{W_t}_{H^{s-1}} &=& \norm{B(\underline{W})W_x+\underline{F}}_{H^{s-1}} 
\nonumber\\
&\le& C(s,\hsstar,R)(E^s(W)+ 1) \,. %\red{\le K??}\,.
\label{e:Wtbound}
\end{eqnarray}
We fix the choice of $K=K(s,\hsstar,R)$ at this point, requiring that
\[
C(s,\hsstar,R)(R+1)<K\,.
\]
6) Now, substituting all estimates back into \eqref{e:ES1}, we find
the Gronwall-type differential inequality
\begin{equation}\label{e:ES2}
\partial_t E^s(W) \leq \alpha 
C(s, \hsstar, R) \paren{E^s(W) +  1 }.
\end{equation}
This in turn gives the energy inequality
\begin{equation}\label{e:ES3}
E^s(W) \leq 
e^{C\alpha t}E^s(W_0) + e^{C\alpha t} - 1. 
\end{equation}
We choose $T=T(s,\hsstar,R)>0$ small enough so that
\[
e^{CT}E^s(W_0) + e^{CT} - 1< R. 
\]
7) Now pointwise, we have the bound
\[
|h_t| = \alpha |\eta_t| = \alpha |\underline{u} h_x + \underline{h} u_x| \le 
\alpha C(s,\hsstar)(E^s(\underline{W})+ E^s(W))
\]
hence from the hypotheses on the initial data,
\begin{equation}
h(x,t) = h_0(x) + \int_0^t h_t(x,\tau)\,d\tau 
> 2\hsstar - \alpha t C(s,\hsstar)(R+E^s(W)).
\label{e:hbound}
\end{equation}
Making $T$ smaller if necessary, we can ensure that
\begin{equation}
 \hsstar > C(s,\hsstar) 2RT \,.
\end{equation}
Now, considering \eqref{e:ES3}, \eqref{e:Wtbound}, and \eqref{e:hbound} in turn,
we conclude that the inequalities in \eqref{i:hEs} and \eqref{i:expEs} all hold as desired.
} %END BLUE
\end{proof}

%%%%%%%%%%%%%%%%%%%%%%%%%%%%%%%%%%%%%%%%%%%%%%%%%%%%%%%%%%%
\bigskip
\subsection{Proof of Theorem \ref{thm:wellposedness}} \label{subsec:large time exist.}
The rSV system has a structure highly resembling that of the classical 
shallow-water system.  With the energy estimate, proofs of 
existence and uniqueness are standard, so we omit details.

%\blue{
{The proof that the relative energy $\tilde E_\star$ is conserved 
relies on a few basic facts: \blue{Provided} $s\geq2$, we have 
\[
\eta,u\in C([0,T],H^2(\RR))\cap C^1([0,T],H^1(\RR)).
\]
Also, for any $v,w\in H^1(\RR)$,
$\int_{\RR} v w_x =-\int_{\RR}v_x w$ and $\iii_h\inv (v_x)\in H^2(\RR)$, with
\[
\int_{\RR} (h u - \epsl(h^3 u_x)_x)\iii_h\inv (v_x) = \int_{\RR} u v_x.
\]
Using these facts, the details of checking that $\D_t\tilde E_\star=0$ 
from \eqref{e:system} are rather tedious but straightforward, so we omit them.
\blue{For general initial data $W_0\in H^s(\R)$ with $s>\frac32$, we can approximate by initial data 
$\tilde W_0\in H^2(\R)$ and infer from \eqref{e:approxW} that relative
energy $\tilde E_\star$ is constant in time for the $H^s$ solution $W$.}
}

\section{Blow-up Criteria}
\label{s:continuation}

\blue{In this section, our aim is to establish the following criteria for finite-time
breakdown of regular solutions.}
\begin{theorem}\label{thm:continuation}
Let $[0,T_{\max})$ be the maximal interval of existence of the solution from 
Theorem~\ref{thm:wellposedness}.
Then if $T_{\max} < \infty$ we have either
\begin{equation}\label{e:Wxblowup}
\limsup_{t\to T_{\max}^-}\norm{\partial_x W(\cdot,t)}_{L^\infty} = \infty \,,
\end{equation}
or
\begin{equation}\label{e:0depth}
\liminf_{t\to T_{\max}^-} \,
\inf_{x\in\RR} h(x,t) = 0 \,.
\end{equation}
\end{theorem}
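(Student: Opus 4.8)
The plan is to argue by contraposition. Assuming $T_{\max}<\infty$, suppose \emph{neither} alternative in the statement holds, so that there are constants $M<\infty$ and $\hsstar>0$ with
\[
\norm{\partial_x W(\cdot,t)}_{L^\infty}\le M
\quad\text{and}\quad
h(x,t)\ge\hsstar
\qquad\text{for all }t\in[0,T_{\max}),\ x\in\RR .
\]
Under these standing bounds I would show $\norm{W(\cdot,t)}_{H^s}$ stays bounded as $t\to T_{\max}^-$. Since the lifespan in Theorem~\ref{thm:wellposedness} depends only on $\norm{W^0}_{H^s}$ and on $\inf_x h$, a uniform such bound produces a \emph{uniform} restart time $\tau>0$ valid from any initial time $t_0\in[0,T_{\max})$; restarting from some $t_0>T_{\max}-\tau$ and invoking uniqueness then continues the solution to $[0,T_{\max}+\delta)$ for some $\delta>0$, contradicting maximality of $T_{\max}$. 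Thus one of \eqref{e:Wxblowup}, \eqref{e:0depth} must hold.

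First I would control the low norms. From conservation of $\tilde E_\star$ together with $h\ge\hsstar$, the quantities $\norm{u(\cdot,t)}_{L^2}$, $\norm{\eta(\cdot,t)}_{L^2}$, $\norm{u_x(\cdot,t)}_{L^2}$, $\norm{\eta_x(\cdot,t)}_{L^2}$ are bounded on $[0,T_{\max})$ by a constant depending only on $\tilde E_\star$, $\eps$, $g$, $\hsstar$; hence so are $\norm{W(\cdot,t)}_{H^1}$, $\norm{W(\cdot,t)}_{L^\infty}$ and $\norm{h(\cdot,t)}_{L^\infty}$. Next I would pin down the nonlocal term $f(W)=\eps\alpha\,\iii_h^{-1}\partial_x\psi'$ with $\psi'=2h^3u_x^2-\half gh^2\eta_x^2$: both $\norm{\psi'}_{L^\infty}$ and a low Sobolev norm of $\psi'$ are bounded by $M$ and the quantities above, so Lemma~\ref{lem:wwx bounds} yields $\eps^{-1/2}\norm{f}_{L^\infty}+\norm{f_x}_{L^\infty}\le C_1$, where $C_1=C_1(s,\hsstar,M,\tilde E_\star)$ is crucially \emph{independent} of $\norm{W}_{H^s}$.

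The core step is an $H^s$ energy inequality. Differentiating $E^s(W)=g\norm{\eta}_{H^s}^2+(\Lambda^s u,h\,\Lambda^s u)$ along the solution and using $W_t=-B(W)W_x-F(W)$, exactly as in the proof of Theorem~\ref{thm:energy estimate}, produces four terms: one involving $h_t=-(hu)_x$, bounded in $L^\infty$ by the low-norm step and $M$; one that becomes $((AB)_x\Lambda^s W,\Lambda^s W)$ after integrating by parts (using symmetry of $AB$), with $(AB)_x$ controlled by $M$ and the low norms; a commutator term handled by the Kato--Ponce estimate \eqref{kato ponce2} using $\norm{\partial_x W}_{L^\infty}\le M$ and $H^{s-1}\hookrightarrow L^\infty$; and the nonlocal term $-2(\Lambda^s f,h\,\Lambda^s u)$. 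For this last term the tame product estimate \eqref{moser tame2} gives $\norm{\psi'}_{H^{s-1}}\le C(M)(1+\norm{W}_{H^s})$ since one derivative already sits in $L^\infty$; feeding this together with $C_1$ into the tame elliptic bound \eqref{elliptic estimate2} of Lemma~\ref{lemma:ih-1} (applied with its parameter equal to $s-1$) yields $\norm{f}_{H^s}\le C(1+\norm{W}_{H^s})$, so the nonlocal term is at most $C(1+E^s(W))$. Summing, $\partial_t E^s(W)\le C\bigl(1+E^s(W)\bigr)$ with $C$ independent of $t$; Gronwall gives $E^s(W(t))\le e^{Ct}(E^s(W(0))+1)-1$, bounded on the finite interval $[0,T_{\max})$, and since $E^s(W)\ge\min(g,\hsstar)\norm{W}_{H^s}^2$ this is the desired uniform $H^s$ bound, completing the contradiction as described above.

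The main obstacle is precisely the nonlocal term in this $H^s$ estimate. Using only the cruder elliptic bound Lemma~\ref{lemma:ih-1}(3) would give $\norm{f}_{H^s}\lesssim\norm{\psi'}_{H^{s-1}}(1+\norm{h-h_\star}_{H^{s-1}})$, hence growth like $E^s(W)^{3/2}$, for which Gronwall only yields local-in-time control and cannot reach $T_{\max}$. Obtaining \emph{linear} growth requires the refined pointwise bounds on $f$ and $f_x$ from Lemma~\ref{lem:wwx bounds}, so that the coefficient multiplying $\norm{h-h_\star}_{H^{s-1}}$ in \eqref{elliptic estimate2} is a fixed constant rather than a quantity of size $\norm{W}_{H^s}$.
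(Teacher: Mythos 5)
Your proposal is correct and follows essentially the same route as the paper: assume both alternatives fail, use energy conservation plus the positive-depth bound to control $\norm{W}_{W^{1,\infty}}$, then run the $H^s$ energy estimate with the symmetrizer $A(W)$ evaluated at the solution itself, treating the nonlocal term via the refined $L^\infty$/$L^1$ bounds of Lemma~\ref{lem:wwx bounds} fed into the tame elliptic estimate (this combination is exactly the paper's Corollary~\ref{cor:fWbound}), and conclude by Gronwall and the restart argument. Your closing remark correctly identifies the same crux the paper's argument is built around, namely that the crude elliptic bound would give superlinear growth in $E^s$ and the tame version with pointwise control of $f$, $f_x$ is what makes the Gronwall inequality linear.
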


From the uniform positivity criterion in Proposition~\ref{p:Ebound} and  
the change of variables in \eqref{e:changevars} 
(which implies $\alpha^2\tilde E_\star=E_\star$), we deduce
that for small-energy perturbations of constant states,
a finite maximal existence time implies derivative blow-up.
\begin{corollary} \label{cor:blowup}
If $\alpha^2\tilde E_\star <\frac13 g\sqrt\epsl h_\star^3$
and the maximal time of existence $T_{\max}<\infty$, 
then  %$\norm W(t,\cdot)}_{H^s}\to\infty$ as $t\to T_{\max}^-$, and 
\begin{equation}\label{e:blowup2}
\limsup_{t\to T_{\max}^-}\norm{\partial_x W(\cdot,t)}_{L^\infty} = \infty \,.
\end{equation}
\end{corollary}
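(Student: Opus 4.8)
The plan is to deduce the corollary directly from Theorem~\ref{thm:continuation}, by using the energy bound of Proposition~\ref{p:Ebound} to rule out the vanishing-depth alternative \eqref{e:0depth}. First I would record how the relative energy transforms under the change of variables \eqref{e:changevars}: taking $u_\star=0$ (no loss, by Galilean invariance), and substituting $h-h_\star=\alpha\eta$ and the original velocity $\alpha u$ into the relative energy \eqref{d:Estar}, one checks term by term that $E_\star=\alpha^2\tilde E_\star$, with $\tilde E_\star$ the scaled relative energy \eqref{e:tildeE}. Since the solution satisfies $W=(\eta,u)^T\in C([0,T_{\max});H^s(\RR))$ with $s>\frac32$, we have $\eta(\cdot,t),\,u(\cdot,t)\in H^1(\RR)$, so $(h-h_\star,\,\alpha u)\in H^1(\RR)$ for every $t\in[0,T_{\max})$, and Proposition~\ref{p:Ebound} applies at each fixed time.

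Next, the conservation-of-energy property in Theorem~\ref{thm:wellposedness} (immediate for $s\ge2$, and extended to $s>\frac32$ by the approximation argument and \eqref{e:approxW} indicated there) shows that $\tilde E_\star$, and hence $E_\star=\alpha^2\tilde E_\star$, is constant on $[0,T_{\max})$. The hypothesis $\alpha^2\tilde E_\star<\frac13 g\sqrt\epsl h_\star^3$ is therefore exactly the condition $E_\star<\frac13 g\sqrt\epsl h_\star^3$, uniformly in $t$. Proposition~\ref{p:Ebound}(b) then produces a single constant $h_E\in(0,h_\star)$, determined only by the (now fixed) value of $E_\star$ through \eqref{d:hE} and independent of $t$, such that $h(x,t)\ge h_E>0$ for all $x\in\RR$ and all $t\in[0,T_{\max})$. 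Hence $\inf_{x\in\RR}h(x,t)\ge h_E$ for every $t$, so the limit in \eqref{e:0depth} is bounded below by $h_E>0$; that is, alternative \eqref{e:0depth} of Theorem~\ref{thm:continuation} cannot occur.

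Finally, if $T_{\max}<\infty$ then Theorem~\ref{thm:continuation} forces one of its two alternatives to hold, and since \eqref{e:0depth} has been excluded we conclude \eqref{e:Wxblowup}, which is precisely \eqref{e:blowup2}. The argument is short and poses no genuine analytic obstacle once Theorem~\ref{thm:continuation} is in hand; the only point that requires a little care is verifying that energy conservation---and hence the time-independence of the lower bound $h_E$---holds throughout the maximal interval at the assumed regularity $s>\frac32$, which is exactly what Theorem~\ref{thm:wellposedness} provides.
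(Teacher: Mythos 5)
Your proposal is correct and follows essentially the same route as the paper: conservation of energy plus Proposition~\ref{p:Ebound}(b) gives the uniform lower bound $h\ge h_E>0$ on $[0,T_{\max})$, which excludes alternative \eqref{e:0depth} of Theorem~\ref{thm:continuation}, leaving \eqref{e:Wxblowup}. The extra details you supply (the identity $E_\star=\alpha^2\tilde E_\star$, the $H^1$ regularity needed to apply the proposition, and energy conservation at regularity $s>\frac32$) are exactly the points the paper leaves implicit.
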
 

This result follows from the fact that under the given hypothesis,
\begin{equation}
\inf_{x\in\R} h(x,t) \ge h_E>0
\end{equation}
for all $t\in[0,T_{\max})$, 
with $h_E$ given by Proposition~\ref{p:Ebound}, so 
\eqref{e:0depth} cannot occur.

\blue{
We will use a Gronwall-type inequality to derive the blow-up criterion
of Theorem~\ref{thm:continuation}. 
For this argument, it is necessary to improve the estimate on the nonlocal
operator $\iii_h\inv$ from \eqref{elliptic estimate2} 
by controlling the $L^\infty$ norms that appear explicitly on the right-hand side.
Toward this aim, the following classical Landau-Kolmogorov interpolation inequality is crucial.
}
\begin{lemma}
Let $\phi\in C^2(\RR; \RR)$ be such that
\begin{equation*}
\norm{\phi}_{L^\infty} < \infty, \quad \norm{\phi''}_{L^\infty} < \infty.
\end{equation*}
Then
\begin{equation}
\norm{\phi'}^2_{L^\infty} \leq 2 \norm{\phi}_{L^\infty}\norm{\phi''}_{L^\infty}. \label{landau kol}
\end{equation}
\end{lemma}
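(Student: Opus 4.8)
The plan is to prove the pointwise bound $|\phi'(x)|\le \sqrt{2\|\phi\|_{L^\infty}\|\phi''\|_{L^\infty}}$ at each $x\in\RR$ and then take the supremum over $x$. The starting point is the second-order Taylor expansion with Lagrange remainder: for any $x\in\RR$ and any step size $\delta>0$, there exist $\xi_\pm$ with $\xi_+\in(x,x+\delta)$ and $\xi_-\in(x-\delta,x)$ such that
\begin{equation*}
\phi(x\pm\delta) = \phi(x) \pm \delta\,\phi'(x) + \tfrac12\delta^2\phi''(\xi_\pm)\,.
\end{equation*}

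Next I would subtract the two expansions to eliminate $\phi(x)$, obtaining
\begin{equation*}
2\delta\,\phi'(x) = \phi(x+\delta) - \phi(x-\delta) - \tfrac12\delta^2\bigl(\phi''(\xi_+)-\phi''(\xi_-)\bigr)\,,
\end{equation*}
and then estimate crudely: $2\delta|\phi'(x)| \le 2\|\phi\|_{L^\infty} + \delta^2\|\phi''\|_{L^\infty}$, i.e.
\begin{equation*}
|\phi'(x)| \le \frac{\|\phi\|_{L^\infty}}{\delta} + \frac{\delta}{2}\,\|\phi''\|_{L^\infty}\,.
\end{equation*}
The final step is to optimize over $\delta>0$. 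If $\|\phi\|_{L^\infty}=0$ or $\|\phi''\|_{L^\infty}=0$ the conclusion is immediate (in the latter case $\phi$ is affine and bounded, hence constant), so one may assume both are positive and choose $\delta = \sqrt{2\|\phi\|_{L^\infty}/\|\phi''\|_{L^\infty}}$, which makes the two terms equal and yields $|\phi'(x)|\le\sqrt{2\|\phi\|_{L^\infty}\|\phi''\|_{L^\infty}}$. Squaring and taking the supremum over $x\in\RR$ gives \eqref{landau kol}.

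There is essentially no obstacle here—the argument is elementary and the only mild subtlety is to handle the degenerate cases $\|\phi\|_{L^\infty}=0$ or $\|\phi''\|_{L^\infty}=0$ before performing the optimization in $\delta$, and to note that the choice of $\delta$ is legitimate since it is a fixed positive number independent of $x$.
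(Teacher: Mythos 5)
Your proof is correct, and it is essentially the same standard argument as the paper's: both compare $\phi$ at two points a suitably chosen distance apart and use a second-order Taylor-type bound, the only cosmetic difference being that you optimize over a free step size $\delta$ while the paper normalizes $\norm{\phi''}_{L^\infty}=1$ and takes the step equal to $|\phi'(0)|$ directly. Your handling of the degenerate cases $\norm{\phi}_{L^\infty}=0$ and $\norm{\phi''}_{L^\infty}=0$ is also fine.
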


\begin{proof}
Without loss we may assume $\norm{\phi''}_{L^\infty} = 1$ and $\phi'(0) = a > 0$. Then
\begin{eqnarray}
\phi(a) &=& \phi(0) + \int_0^a \phi'(x) \,dx = \phi(0) + \int_0^a \paren{\phi'(0) + \int_0^x \phi''(y) \,dy} \,dx  \nonumber\\
&\geq & \phi(0) + \int_0^a \paren{a + \int_0^x (-1) \,dy} \,dx = \phi(0) + \half a^2
\end{eqnarray}
Similarly, $\phi(-a) \leq \phi(0) - \half a^2$. So
\begin{equation}
2\norm{\phi}_{L^\infty} \geq \phi(a) - \phi(-a) \geq a^2 = \abs{\phi'(0)}^2.
\qquad\qedhere
\end{equation}
\end{proof}

\blue{
The nonlocal operator $\iii_h\inv$ will be bounded using the following lemma.
Below, the Banach space of continuous functions $\phi\colon\R\to\R$ 
having finite limits $\phi(\pm\infty)$ at $\pm\infty$ is denoted
by $\Clim(\R)$, or just $\Clim$. We let $C_0$ denote the subspace of continuous 
functions on $\R$ that vanish at $\pm\infty$, having limits $\phi(\pm\infty)=0$.
\begin{lemma} \label{lemma:ih-1No2}
Let $s > \frac32$ and let $h - h_\star \in H^s(\RR)$ 
with $0 < h_{\min} \leq h \leq h_{\max} < \infty$. 
Then 
$\iii_h^{-1}$ is well-defined on $\Clim$. 
If $\phi\in \Clim$ then
$h\phi\in\Clim$, and 
with $v = \iii_h^{-1}(h\phi)$, it holds that $v\in \Clim$ 
and $v_x$, $v_{xx}\in C_0$, with 
\begin{equation}
\norm{v}_{L^\infty} \leq  \norm{\phi}_{L^\infty}\quad \text{and }\quad
 \norm{v_x}_{L^\infty} \leq \frac{2}{\sqrt{\epsl}}\frac{h_{\max}^2}{h_{\min}^3}\norm{\phi}_{L^\infty}.
\label{e:vests}
\end{equation}
\end{lemma}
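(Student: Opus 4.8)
The plan is to treat $\iii_h = h - \epsl\D_x\circ h^3\circ\D_x$ as a genuinely elliptic second-order operator and solve $\iii_h v = h\phi$ by an ODE/maximum-principle argument rather than a variational one, since the right-hand side lies in $L^\infty$ rather than $L^2$. First I would note that $h\phi\in\Clim$ is immediate: $h-h_\star\in H^s\hookrightarrow C_0$ since $s>\frac32$, so $h\to h_\star$ at $\pm\infty$ and $h$ is bounded, hence the product of $h$ with any element of $\Clim$ again has finite limits (indeed $h\phi\to h_\star\phi(\pm\infty)$). To make sense of $\iii_h^{-1}$ on $\Clim$, I would write the equation in the (non-divergence, after expanding) form $\epsl h^3 v_{xx} + \epsl (h^3)_x v_x - h v = -h\phi$, divide by $\epsl h^3$, and view this as a linear second-order ODE $v_{xx} + p(x)v_x - q(x)v = -q(x)\phi$ with $p = (h^3)_x/h^3\in L^\infty$ (using $h\ge h_{\min}$ and $h_x\in H^{s-1}\hookrightarrow L^\infty$) and $q = 1/(\epsl h^2)$ bounded above and below by positive constants. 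Existence of a bounded solution with the stated asymptotic behavior can be obtained either by a fixed-point/continuity argument using the bounded inverse on $H^2$ already established in Lemma~\ref{lemma:ih-1}(1) (approximate $h\phi$ in $L^2\cap L^\infty$ and pass to the limit using the $L^\infty$ bounds below, which are uniform), or directly by constructing a Green's function from two linearly independent solutions, one decaying at $+\infty$ and one at $-\infty$; I would favor the approximation route since it reuses existing machinery.

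The heart of the matter is the two estimates in \eqref{e:vests}. For the sup bound $\norm{v}_{L^\infty}\le\norm{\phi}_{L^\infty}$, I would use the maximum principle for $\iii_h$ in the form: if $v$ attains an interior positive maximum at $x_0$, then $v_x(x_0)=0$ and $v_{xx}(x_0)\le 0$, so from $hv - \epsl(h^3v_x)_x = h\phi$ evaluated at $x_0$ — being careful to write $\epsl(h^3 v_x)_x = \epsl h^3 v_{xx} + \epsl(h^3)_x v_x$, whose second term vanishes at $x_0$ — we get $h(x_0)v(x_0) = h(x_0)\phi(x_0) + \epsl h^3(x_0)v_{xx}(x_0) \le h(x_0)\phi(x_0)$, hence $v(x_0)\le\phi(x_0)\le\norm{\phi}_{L^\infty}$. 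The same argument at a negative minimum gives the lower bound, and since $v$ has finite limits at $\pm\infty$ (where the equation degenerates to $h_\star v = h_\star\phi(\pm\infty)$, forcing $v(\pm\infty)=\phi(\pm\infty)$), any extremum is either interior or attained in the limit, covered the same way. That $v_x,v_{xx}\in C_0$ then follows from the ODE: $v_{xx} = (h v - h\phi)/(\epsl h^3) - (h^3)_x v_x/h^3$, and bootstrapping with $v\in\Clim$, $h-h_\star\in C_0$, $h_x\in C_0$.

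For the gradient bound, the plan is to apply the Landau--Kolmogorov inequality \eqref{landau kol} to $\phi\mapsto v$: we have $\norm{v}_{L^\infty}\le\norm{\phi}_{L^\infty}$ from the previous step, and we need $\norm{v_{xx}}_{L^\infty}$. From $\epsl h^3 v_{xx} = hv - h\phi + \epsl(h^3)_x v_x$ one cannot close directly because $v_x$ reappears; instead I would first get a crude bound. Rearranging, $\epsl h^3 |v_{xx}| \le h_{\max}(\norm{v}_{L^\infty}+\norm{\phi}_{L^\infty}) + \epsl\norm{(h^3)_x}_{L^\infty}\norm{v_x}_{L^\infty} \le 2h_{\max}\norm{\phi}_{L^\infty} + \epsl\norm{(h^3)_x}_{L^\infty}\norm{v_x}_{L^\infty}$, so $\norm{v_{xx}}_{L^\infty}\le \frac{2h_{\max}}{\epsl h_{\min}^3}\norm{\phi}_{L^\infty} + \frac{\norm{(h^3)_x}_{L^\infty}}{h_{\min}^3}\norm{v_x}_{L^\infty}$. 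Feeding this into \eqref{landau kol}, $\norm{v_x}_{L^\infty}^2 \le 2\norm{\phi}_{L^\infty}\big(\frac{2h_{\max}}{\epsl h_{\min}^3}\norm{\phi}_{L^\infty} + \frac{\norm{(h^3)_x}_{L^\infty}}{h_{\min}^3}\norm{v_x}_{L^\infty}\big)$, a quadratic inequality in $\norm{v_x}_{L^\infty}$ that yields a bound of the claimed order $\epsl^{-1/2}h_{\max}^2 h_{\min}^{-3}\norm{\phi}_{L^\infty}$ — the $\epsl^{-1/2}$ reflecting exactly the scaling of the second term's dominant contribution. I expect the main obstacle to be not these estimates but the clean justification of existence and uniqueness of $v$ in $\Clim$ with $v_x, v_{xx}\in C_0$ when the data is merely $L^\infty$: one must be slightly careful that the $H^2$ solutions from Lemma~\ref{lemma:ih-1}, applied to an $L^2$-truncation $h\phi_n$ of $h\phi$, converge locally uniformly (with derivatives) to a limit that is genuinely $\iii_h^{-1}(h\phi)$; the uniform $L^\infty$ bounds on $v_n$, $\partial_x v_n$, $\partial_x^2 v_n$ established above, together with Arzelà--Ascoli on compacts and the ODE to control equicontinuity of $v_{xx}$, make this routine but it should be spelled out.
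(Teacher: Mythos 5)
Your construction of $v$ (truncation to $L^2\cap C_0$, uniform $L^\infty$ bounds, Arzel\`a--Ascoli, plus handling the limits at $\pm\infty$) and your maximum-principle argument for $\norm{v}_{L^\infty}\le\norm{\phi}_{L^\infty}$ match the paper's proof in substance. The gradient bound, however, has a genuine gap: expanding $\iii_h$ in non-divergence form and applying Landau--Kolmogorov in the $x$ variable forces you to estimate $\epsl h^3 v_{xx}$ by $h(v-\phi)+\epsl(h^3)_xv_x$, and the resulting quadratic inequality yields
\begin{equation*}
\norm{v_x}_{L^\infty}\ \lesssim\ \frac{\norm{(h^3)_x}_{L^\infty}}{h_{\min}^3}\norm{\phi}_{L^\infty}
+\frac{h_{\max}^{1/2}}{\sqrt{\epsl}\,h_{\min}^{3/2}}\norm{\phi}_{L^\infty},
\end{equation*}
whose first term depends on $\norm{h_x}_{L^\infty}$. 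That is strictly weaker than the stated estimate $\frac{2}{\sqrt{\epsl}}\frac{h_{\max}^2}{h_{\min}^3}\norm{\phi}_{L^\infty}$, whose constant involves only $\epsl$, $h_{\min}$, $h_{\max}$. This is not a cosmetic loss: the lemma feeds into Proposition~\ref{p:Qbound} and the explicit constant $C_2(\epsl,h_\star)$ that bounds $Q$ uniformly up to the blow-up time, precisely while $h_x$ (via $P_\pm$) is becoming unbounded. A bound on $v_x$ that degrades with $\norm{h_x}_{L^\infty}$ would make the blow-up argument circular.

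The fix is to exploit the divergence structure rather than expand it away: introduce the variable $z$ with $\frac{d}{dz}=h^3\frac{d}{dx}$, so the equation becomes $v-\epsl h^{-4}v_{zz}=\phi$ with \emph{no} first-order term. Then $\epsl\norm{v_{zz}}_{L^\infty}=\norm{h^4(v-\phi)}_{L^\infty}\le 2h_{\max}^4\norm{\phi}_{L^\infty}$ directly, and Landau--Kolmogorov applied in $z$ gives $h_{\min}^3\norm{v_x}_{L^\infty}\le\norm{v_z}_{L^\infty}\le(2\norm{v}_{L^\infty}\norm{v_{zz}}_{L^\infty})^{1/2}\le 2\epsl^{-1/2}h_{\max}^2\norm{\phi}_{L^\infty}$, with no appearance of $h_x$. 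Everything else in your plan can stay as is.
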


\begin{proof}
1.  We first show that the $L^\infty$ estimates hold for $\phi\in L^2(\RR)\cap C_0$.
In this case, $v\in H^2(\RR)$, since $\iii_h^{-1}: L^2(\RR) \to H^2(\RR)$ 
is well-defined by Lemma \ref{lemma:ih-1}. Moreover, $v$ is $C^2$ since $h$ is $C^1$
and $hv - \eps (h^3 v_{x})_x=h\phi$.

We introduce a new variable $z$ on $\RR$ such that
\begin{equation*}
\frac{d}{dz} = h^3 \frac{d}{dx}.  
\end{equation*}
Then in terms of the new variable we have
\begin{equation}
v - \epsl h^{-4} v_{zz} = \phi.   
\end{equation}
By the maximum principle it follows
$\norm{v}_{L^\infty} \leq  \norm{\phi}_{L^\infty}$. 
Therefore
\begin{equation}
\epsl \norm{v_{zz}}_{L^\infty} = \norm{h^4(v - \phi)} _{L^\infty}
        \leq  {2h_{\max}^4} \norm{\phi}_{L^\infty}   ,   
\end{equation}
hence the Landau-Kolmogorov interpolation inequality \eqref{landau kol} implies
\begin{equation*}
(h_{\min})^3\norm{v_x}_{L^\infty} \leq \norm{v_z}_{L^\infty}
        \leq \paren{2\norm{v}_{L^\infty} \norm{v_{zz}}_{L^\infty}}^{1/2}    
	%\nonumber\\
\leq   \frac{2 h_{\max}^2}{\sqrt{\epsl}} \norm{\phi}_{L^\infty}.
\end{equation*}
Since $L^2(\R)\cap C_0$ is dense in $C_0$, it follows $\iii_h\inv$ is well-defined
on $C_0$, and the estimates \eqref{e:vests} hold for all $\phi\in C_0$.

2. Now consider an arbitrary $\phi\in \Clim$, and define 
$w_\phi \in C^\infty(\RR)$ by
\begin{equation}
w_\phi (x) = \frac{\phi(-\infty)}{h_\star} + \frac{\phi(+\infty) - \phi(-\infty) }{h_\star}\frac{e^x}{1+e^x} \,.
\end{equation}
This function has the property that all its derivatives vanish at $\pm\infty$, and 
\begin{equation}
hw_\phi(-\infty) = \phi(-\infty), \quad hw_\phi(+\infty) = \phi(+\infty). 
\end{equation}
Then $\phi - \iii_h w_\phi \in C_0$. %\subset \overline{L^2(\RR)}^{L^\infty(\RR)}$. 
Due to step 1, we may then define %for all $\phi\in\Clim$
\begin{equation}
u= \iii_h^{-1}\phi \defeq w_\phi + \iii_h^{-1}( \phi - \iii_h w_\phi  ).
\end{equation}
Note that  $u$ is $C^2$ and
\begin{equation}
\iii_h u = \phi \,.
\end{equation}
The estimates \eqref{e:vests} on $v=\iii_h\inv(h\phi)$ follow similarly as in step 1.
\end{proof}

Next, we observe that $\Dx\iii_h^{-1}\Dx\circ h^3$ is a 
nonlocal operator of order zero. 
We can extract the local part of this operator, as follows.
Since $\iii_h = h - \epsl \partial_x\circ h^3 \circ \partial_x$,
\begin{equation}
-\epsl\Dx \iii_h^{-1}\Dx \circ h^3 \circ\Dx = \Dx - \Dx\iii_h^{-1}\circ h. \label{e:nonlocal decomp2}
\end{equation}
This motivates us to write, for any nice enough function $\phi$, 
\begin{equation}\label{e:nonlocal decomp3}
 - \epsl \partial_x\iii_h^{-1}\partial_x (h^3 \phi) = \phi - \partial_x \iii_h^{-1}\paren{h\int_{-\infty}^x \phi } \,.
\end{equation}
In light of this, we have the following $L^\infty$ estimates,
which in particular improve the estimate \eqref{elliptic estimate2}
in part 2) of Lemma~\ref{lemma:ih-1}.

\begin{lemma}\label{lem:wwx bounds}
Let $s>\frac32$ and suppose $h-h_\star\in H^s$ with $0 < h_{\min} \leq h \leq h_{\max} <\infty$.
\begin{itemize}
\item[1)] If $w = \eps\iii_h^{-1}\D_x\psi$ with 
$\psi\in L^1\cap \Clim$, then
\begin{equation}
\norm{w}_{L^\infty} + \norm{w_x}_{L^\infty} \leq C(\epsl, h_{\min}, h_{\max}) \paren{\norm{\psi}_{L^\infty} + \norm{\psi}_{L^1}  } \,.
\end{equation}

\item[2)] If furthermore $\psi\in H^{s-1}$, then 
\begin{equation}
\norm{w}_{H^s}\le \hat C_3\left(
\norm{\psi}_{H^{s-1}} + \norm{h-h_\star}_{H^{s-1}}
(\norm{\psi}_{L^\infty} +\norm{\psi}_{L^1})
\right) \,,
\label{e:wpsitame}
\end{equation}
where $\hat C_3 = C(s,\eps,h_{\min},\norm{h-h_\star}_{W^{1,\infty}})$.
\end{itemize}
\end{lemma}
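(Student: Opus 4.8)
The plan is to prove the two estimates in Lemma~\ref{lem:wwx bounds} by combining the decomposition \eqref{e:nonlocal decomp3} with the $L^\infty$-mapping properties of $\iii_h^{-1}$ from Lemma~\ref{lemma:ih-1No2} and the tame $H^s$-bound from part 2) of Lemma~\ref{lemma:ih-1}. The key observation is that $w=\eps\iii_h^{-1}\D_x\psi$ can be rewritten so that the ``order-zero'' nonlocal operator $\Dx\iii_h^{-1}\Dx\circ h^3$ is split into a purely local part (the identity) plus a genuinely smoothing remainder of the form $\Dx\iii_h^{-1}(h\,\cdot\,)$. Concretely, writing $\psi = h^3\tilde\psi$ with $\tilde\psi = h^{-3}\psi$, formula \eqref{e:nonlocal decomp3} gives
\begin{equation*}
w = -\tilde\psi + \partial_x \iii_h^{-1}\!\left(h\int_{-\infty}^x \tilde\psi\right)\,.
\end{equation*}
Since $\psi\in L^1\cap\Clim$ and $h-h_\star\in H^s\hookrightarrow C_0$ (as $s>\frac32$) with $h$ bounded below, we have $\tilde\psi\in L^1\cap\Clim$ as well, with norms controlled by those of $\psi$; and the antiderivative $\Psi(x)=\int_{-\infty}^x\tilde\psi$ lies in $\Clim$ with $\|\Psi\|_{L^\infty}\le\|\tilde\psi\|_{L^1}$ and limits $\Psi(-\infty)=0$, $\Psi(+\infty)=\int_\R\tilde\psi$.

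For part 1), I would apply Lemma~\ref{lemma:ih-1No2} to $v=\iii_h^{-1}(h\Psi)$ — with $\phi$ there taken to be $\Psi\in\Clim$ — to get $\|v\|_{L^\infty}\le\|\Psi\|_{L^\infty}$ and $\|v_x\|_{L^\infty}\le C(\eps,h_{\min},h_{\max})\|\Psi\|_{L^\infty}$. Then $\|w\|_{L^\infty}\le\|\tilde\psi\|_{L^\infty}+\|v_x\|_{L^\infty}$, which is bounded by $C(\eps,h_{\min},h_{\max})(\|\psi\|_{L^\infty}+\|\psi\|_{L^1})$. For $w_x$ one differentiates the decomposition: $w_x = -\tilde\psi_x + v_{xx}$. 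This is the one delicate point — $\tilde\psi_x$ need not be controlled — so instead I would differentiate \emph{before} splitting, or equivalently note $\eps(h^3 w_x)_x$ relates back to $w$ and $\psi$ through $\iii_h w = h w - \eps(h^3w_x)_x = \eps\D_x\psi$ in the distributional sense; hence $\eps\D_x(h^3 w_x) = h w - \eps\psi_x$, and one recovers $h^3 w_x$ as $\int_{-\infty}^x(h^{-1}\cdot hw/\eps\cdots)$. A cleaner route: write $w = \eps\iii_h^{-1}\D_x\psi$ and use $-\eps\Dx\iii_h^{-1}\Dx = \iii_h^{-1}\circ h\cdot\Dx^{-1}\cdots$ — but the most robust is to avoid differentiating $\psi$ at all: from \eqref{e:nonlocal decomp2} applied with $\phi = h^{-3}\psi$ we directly get $w = \phi - \Dx\iii_h^{-1}(h\,\Dx^{-1}\phi)$ and then $w_x$ is $\phi_x$ minus $(\iii_h^{-1}(h\Psi))_{xx}=v_{xx}$; since $v_{zz}$ (in the $z$-variable of Lemma~\ref{lemma:ih-1No2}) is controlled, $v_{xx} = h^{-6}v_{zz} - 3h^{-4}h_x v_z$ stays bounded, but $\phi_x = (h^{-3}\psi)_x$ reintroduces $\psi_x$. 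The honest resolution is that the Lemma is stated with $w_x$ on the \emph{left}, so we should solve for $w$ without ever introducing $\phi_x$: this is exactly what the integral representation $w = -\tilde\psi + v_x$ with $v=\iii_h^{-1}(h\Psi)$ does, giving $\|w\|_{L^\infty}$ directly; and then for $\|w_x\|_{L^\infty}$ we use that $w$ itself is now known to be in $L^\infty\cap L^1$-type space and bootstrap: $h^3 w_x = \int_{-\infty}^x(hw-\eps\psi_x)/\eps$ is \emph{wrong} sign-wise, so in fact one should integrate the equation $\eps(h^3 w_x)_x = hw-\eps\psi_x$ — no. The main obstacle, and the one I expect to spend the most care on, is precisely controlling $\|w_x\|_{L^\infty}$ without differentiating $\psi$; the right tool is to apply Lemma~\ref{lemma:ih-1No2} a \emph{second} time, now reading $w$ as $\iii_h^{-1}$ of something in $\Clim$ after rearranging, so that both $\|w\|_{L^\infty}$ and $\|w_x\|_{L^\infty}$ come out of the single estimate \eqref{e:vests} with no derivative loss.

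For part 2), with the additional hypothesis $\psi\in H^{s-1}$, I would invoke part 2) of Lemma~\ref{lemma:ih-1} (the tame elliptic estimate \eqref{elliptic estimate2} with ``$1+s$'' there equal to $s$ here, i.e.\ with parameter $s-1$) to get
\begin{equation*}
\norm{w}_{H^s}\le \hat C_1\!\left(\norm{\psi}_{H^{s-1}} + \norm{h-h_\star}_{H^{s-1}}\bigl(\eps^{-1/2}\norm{w}_{L^\infty}+\norm{w_x}_{L^\infty}\bigr)\right)\,,
\end{equation*}
and then substitute the $L^\infty$ bounds from part 1), namely $\eps^{-1/2}\|w\|_{L^\infty}+\|w_x\|_{L^\infty}\le C(\eps,h_{\min},h_{\max})(\|\psi\|_{L^\infty}+\|\psi\|_{L^1})$, absorbing all constants into $\hat C_3=C(s,\eps,h_{\min},\|h-h_\star\|_{W^{1,\infty}})$; note $h_{\max}\le h_\star+\|h-h_\star\|_{L^\infty}$ is itself controlled by $\|h-h_\star\|_{W^{1,\infty}}$, so this dependence is legitimate. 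This yields \eqref{e:wpsitame} directly. The only subtlety is ensuring that the hypotheses of Lemma~\ref{lemma:ih-1No2} and of part 2) of Lemma~\ref{lemma:ih-1} are genuinely met — both require $h-h_\star\in H^s$ (or $H^{s-1}$) with a positive lower bound and $h\in W^{1,\infty}$, all of which are assumed here — and that $\psi\in L^1\cap\Clim\cap H^{s-1}$ is enough to legitimize the density/approximation steps used in Lemma~\ref{lemma:ih-1No2}; since $H^{s-1}\cap L^1$ functions can be approximated in the relevant norms by $L^2\cap C_0$ functions, this causes no difficulty, and part 1) of the present lemma is exactly what makes the substitution into the tame estimate produce the advertised improvement over \eqref{elliptic estimate2}.
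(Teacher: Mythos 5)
Your plan assembles the right ingredients --- the decomposition \eqref{e:nonlocal decomp3}, the $L^\infty$ bounds of Lemma~\ref{lemma:ih-1No2}, and the tame estimate \eqref{elliptic estimate2} --- and your part 2) is exactly the paper's argument. But part 1) contains a concrete error that derails your estimate for $\norm{w_x}_{L^\infty}$: the left-hand side of \eqref{e:nonlocal decomp3} with $\phi=h^{-3}\psi$ is $-\eps\partial_x\iii_h^{-1}\partial_x\psi=-w_x$, \emph{not} $w$. Read correctly, the identity you wrote down is
\begin{equation*}
w_x = -h^{-3}\psi + \partial_x\iii_h^{-1}\paren{h\Psi} = -h^{-3}\psi + v_x,
\qquad v \defeq \iii_h^{-1}(h\Psi),\quad \Psi\defeq\int_{-\infty}^x h^{-3}\psi\,,
\end{equation*}
which gives $\norm{w_x}_{L^\infty}$ \emph{directly} from \eqref{e:vests} (applied with $\phi=\Psi\in\Clim$ and $\norm{\Psi}_{L^\infty}\le h_{\min}^{-3}\norm{\psi}_{L^1}$), with no need to differentiate $\psi$ or to control $v_{xx}$; the ``delicate point'' you agonize over does not exist once the identity is attributed to the right quantity. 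What this decomposition does \emph{not} give you is $\norm{w}_{L^\infty}$, and for that a second, separate identity is needed --- you gesture at it (``apply Lemma~\ref{lemma:ih-1No2} a second time'') but never write it down. From $\iii_h = h-\eps\partial_x\circ h^3\circ\partial_x$ one has $-\eps\iii_h^{-1}\partial_x\circ h^3\circ\partial_x = \operatorname{Id}-\iii_h^{-1}\circ h$; applying this to $\Psi$ and noting $h^3\partial_x\Psi=\psi$ yields
\begin{equation*}
w = \eps\iii_h^{-1}\partial_x\psi = -\Psi + \iii_h^{-1}(h\Psi)\,,
\end{equation*}
whence $\norm{w}_{L^\infty}\le 2\norm{\Psi}_{L^\infty}\le C(h_{\min})\norm{\psi}_{L^1}$ by the first estimate in \eqref{e:vests}. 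With both $L^\infty$ bounds established this way, your part 2) --- substituting them into \eqref{elliptic estimate2} at regularity $s-1$ --- goes through verbatim and coincides with the paper's proof.
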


\begin{proof}
From \eqref{e:nonlocal decomp3}, we have
\begin{eqnarray}
 - \epsl \partial_x\iii_h^{-1}\partial_x \psi = h^{-3}\psi - \partial_x \iii_h^{-1}\paren{h\int_{-\infty}^x h^{-3}\psi } \,,
\end{eqnarray}
Due to Lemma~\ref{lemma:ih-1No2},
\begin{eqnarray}
\norm{\partial_x \iii_h^{-1}\paren{h\int_{-\infty}^x h^{-3}\psi }}_{L^\infty}
&\leq & C(\epsl, h_{\min}, h_{\max}) \norm{\int_{-\infty}^x h^{-3}\psi }_{L^\infty} \nonumber \\ 
&\leq & C( \epsl, h_{\min}, h_{\max} ) \norm{\psi}_{L^1}.
\end{eqnarray}
Then it follows
\begin{equation} %eqnarray}
\norm{w_x}_{L^\infty} = 
\norm{\eps\partial_x\iii_h^{-1}\partial_x \psi}_{L^\infty}  
\leq  C(\epsl, h_{\min}, h_{\max}) 
\paren{ \norm{\psi}_{L^\infty} + \norm{ \psi }_{L^1}  }.
\end{equation}
From definition of $\iii_h^{-1}$, we also have
\begin{equation}
-\epsl \iii_h^{-1} \partial_x \circ h^3\circ \partial_x = \operatorname{Id} - \iii_h^{-1}\circ h
\end{equation}
whence, again due to Lemma~\ref{lemma:ih-1No2},
\begin{eqnarray}
\norm{w}_{L^\infty} &=& \norm{\eps \iii_h^{-1}\partial_x \psi}_{L^\infty}   
	= \norm{\eps\iii_h^{-1}\partial_x \circ h^3\circ \partial_x \paren{\int_{-\infty}^x h^{-3}\psi} }_{L^\infty}  \nonumber\\
& \leq  &C(\epsl, h_{\min}, h_{\max})\norm{ \psi }_{L^1}.
\end{eqnarray}
This proves part 1). To deduce part 2), simply use the result of part 1)
together with part 2) of Lemma~\ref{lemma:ih-1}.
\end{proof}

Next we apply these results to provide a tame estimate on the nonlocal
term in the system~\eqref{e:system}.

\begin{corollary} \label{cor:fWbound}
Let $s>\frac32$, let $W = (\eta,u)\in H^s$, 
and suppose $h=h_\star+\alpha\eta$ satisfies $0 < h_{\min} \leq h\leq h_{\max}$.
Let $E=\tilde E_\star$ be the energy given by \eqref{e:tildeE}.
Then with $f(W)$ given by \eqref{d:fW}, we have 
\begin{equation}\label{ineq:tame estimate2}
\norm{ f(W)  }_{H^s}  
\leq   \hat C
\norm{W}_{H^s}\,, \quad\mbox{where}\quad
\hat C =C(s,\eps,h_{\min},E,\norm{W}_{W^{1,\infty}}).
\end{equation}
\end{corollary}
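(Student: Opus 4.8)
The plan is to unwind the definition $f(W)=\epsl\alpha\,\iii_h^{-1}\partial_x(2h^3u_x^2-\tfrac12 gh^2\eta_x^2)$ and apply the tame estimate \eqref{e:wpsitame} of Lemma~\ref{lem:wwx bounds} part 2) with $\psi = 2h^3u_x^2-\tfrac12 gh^2\eta_x^2$. First I would check the hypotheses of that lemma: $h-h_\star\in H^s$ holds by assumption, and the pointwise bounds $0<h_{\min}\le h\le h_{\max}$ are given (with $h_{\max}$ controlled by $h_{\min}$ and $E$ via Remark~\ref{rem:hbounds}(iii), or simply by $\norm{W}_{W^{1,\infty}}$). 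It remains to verify that $\psi\in H^{s-1}$ and to control $\norm{\psi}_{H^{s-1}}$, $\norm{\psi}_{L^\infty}$ and $\norm{\psi}_{L^1}$ in terms of the asserted quantities; then \eqref{e:wpsitame} gives $\norm{w}_{H^s}$ for $w=\eps\iii_h^{-1}\partial_x\psi$, and multiplying by $\alpha\le1$ (absorbed into the constant) yields \eqref{ineq:tame estimate2}, since the leftover factor from $w=\eps\iii_h^{-1}\partial_x\psi$ versus $f(W)=\epsl\alpha\,\iii_h^{-1}\partial_x\psi$ is just $\alpha$.

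The substantive estimates are all on $\psi$. For the $H^{s-1}$ norm: since $s>\tfrac32$ we have $s-1>\tfrac12$, so $H^{s-1}(\R)$ is a Banach algebra; expanding $h=h_\star+\alpha\eta$ shows $h^3,h^2$ differ from constants by $H^{s-1}$ functions with norm controlled by $\norm{\eta}_{H^{s-1}}$ and $\norm{\eta}_{W^{1,\infty}}$, and $u_x^2,\eta_x^2$ lie in $H^{s-1}$ with norm bounded (via the tame product estimate \eqref{moser tame2}) by $\norm{u_x}_{L^\infty}\norm{u_x}_{H^{s-1}}\lesssim \norm{W}_{W^{1,\infty}}\norm{W}_{H^s}$ and similarly for $\eta_x^2$. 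Thus $\norm{\psi}_{H^{s-1}}\le C(s,h_{\min},\norm{W}_{W^{1,\infty}})\norm{W}_{H^s}$. The $L^\infty$ bound is immediate: $\norm{\psi}_{L^\infty}\le C(h_{\max})\norm{W}_{W^{1,\infty}}^2$. For the $L^1$ bound, here is where the energy $E=\tilde E_\star$ enters: from \eqref{e:tildeE}, $\int h^3u_x^2$ and $\int gh^2\eta_x^2$ are each $\le 2E/\epsl$, and since $h\ge h_{\min}$ this gives $\int u_x^2\,dx$ and $\int\eta_x^2\,dx$ bounded by $C(\epsl,h_{\min})E$; hence $\norm{\psi}_{L^1}\le C(\epsl,h_{\min},h_{\max})E$. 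Feeding these three bounds into \eqref{e:wpsitame}, and noting $\norm{h-h_\star}_{H^{s-1}}\le\alpha\norm{\eta}_{H^{s-1}}\le\norm{W}_{H^s}$ which is multiplied by a constant $\le$ one involving $E$ and $\norm{W}_{W^{1,\infty}}$, one obtains $\norm{w}_{H^s}\le \hat C\,\norm{W}_{H^s}$ with $\hat C=C(s,\epsl,h_{\min},E,\norm{W}_{W^{1,\infty}})$, as claimed.

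I do not expect any real obstacle: the lemmas have been set up precisely for this corollary. The one point that requires a moment of care is making sure the right-hand side of \eqref{e:wpsitame} is genuinely linear in $\norm{W}_{H^s}$ — i.e., that the quadratic quantities $u_x^2,\eta_x^2$ are split as $L^\infty\times H^{s-1}$ (or $L^1\times L^\infty$) rather than $H^{s-1}\times H^{s-1}$, so that exactly one factor of the top-order norm appears and the other factor is swept into $\hat C$ via $\norm{W}_{W^{1,\infty}}$. A second bookkeeping point is confirming that the $h$-dependent prefactors $h^3$, $gh^2$ do not spoil the $L^1$ bound — but they don't, since $h\le h_{\max}$ pulls out a harmless constant. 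With these observations the proof is a short assembly of the preceding results.
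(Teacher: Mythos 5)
Your proposal is correct and follows essentially the same route as the paper: set $\psi = 2h^3u_x^2-\tfrac12 gh^2\eta_x^2$, bound $\norm{\psi}_{L^\infty}$ by $\norm{W}_{W^{1,\infty}}$, bound $\norm{\psi}_{L^1}$ by the energy via \eqref{e:tildeE}, bound $\norm{\psi}_{H^{s-1}}$ by the tame product estimate \eqref{moser tame2}, and feed everything into \eqref{e:wpsitame}. Your explicit handling of the factor $\alpha\le 1$ and of the linearity in $\norm{W}_{H^s}$ is, if anything, slightly more careful than the paper's write-up.
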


\begin{proof}
Let $\psi = 2h^3u_x^2 - \half gh^2\eta_x^2$, 
so $w = \eps\iii_h^{-1}\D_x\psi=f(W)$. 
Then clearly
\begin{equation}
\norm{\psi}_{L^\infty} \leq C(\norm{W}_{W^{1,\infty}}) 
\quad \text{and}\quad 
\norm{\psi}_{L^1} \leq  C(h_{\min}, h_{\max}) E \,.
\end{equation}
Combined with \eqref{e:wpsitame}, this implies
\begin{equation}
\norm{ f(W)}_{H^s}
\leq  \hat C %C(s,\epsl, h_{\min}, \norm{h}_{W^{1,\infty}})   \nonumber\\
\Big(\norm{\psi}_{H^{s-1} } + \norm{\eta}_{H^{s-1} } 
\Big).
\label{e:wpsilast-1}
\end{equation}
where $\hat C=C(s,\epsl, h_{\min},E, \norm{W}_{W^{1,\infty}})$.
By applying the tame product estimate \eqref{moser tame2} 
several times to the terms of $\psi$ 
(expanding $h$ in powers of $h-h_\star$), we deduce
\[
\norm{\psi}_{H^{s-1} } \leq C(s,\norm{W}_{W^{1,\infty}}) \norm{W}_{H^s}\,.
\]
Combining this with \eqref{e:wpsilast-1}, we obtain \eqref{ineq:tame estimate2}.
\end{proof}

Now we are ready to present the proof of the blow-up criterion.
} %END BLUE

\begin{proof}[Proof of Theorem~\ref{thm:continuation}]
Suppose $W\in C([0,T_{\max});H^s)$ is the solution of \eqref{e:system} 
from Theorem~\ref{thm:wellposedness} on
a maximal time interval with $T_{\max}<\infty$.  We claim that it is 
impossible that 
\begin{equation} \label{c:continuation}
\sup_{t\in[0,T_{\max})} \norm{W_x(\cdot,t)}_{L^\infty} <\infty
\quad\text{and}\quad
h_{\min}= \inf_{t\in[0,T_{\max})} \inf_\RR h(\cdot,t) >0.
\end{equation}
Suppose on the contrary that \eqref{c:continuation} holds. Then
because the energy $\tilde E_\star$ is conserved,
we have 
\begin{equation}
\norm{W(\cdot,t)}_{L^\infty} \leq C \norm{W(\cdot,t)}_{H^1} 
\leq C(h_{\min}, \epsl)\norm{W^0}_{H^1},
\end{equation}
hence $\norm{W(\cdot,t)}_{W^{1,\infty}}$ remains bounded on $[0,T_{\max})$.
We claim that $\norm{W(\cdot,t)}_{H^s}$ also remains bounded. 
By part (i) of Theorem~\ref{thm:wellposedness} it then follows 
we can continue the solution to a larger time interval, 
contradicting maximality of $T_{\max}$.

\blue{
To bound $\norm{W(\cdot,t)}_{H^s}$ we modify the previous energy estimates
as follows. 
We define a new energy by
\begin{equation}
\tilde E^s (W) \defeq 
(\Lambda^s W, A(W)\Lambda^sW) = g\norm{\eta}_{H^s}^2+ 
(\Lambda^s u, h\Lambda^s u),
\end{equation}
replacing $\underline{W}$ in \eqref{d:Es} by $W$.
Due to the positive depth condition in \eqref{c:continuation} we have
\begin{equation}
\tilde E^s(W) \geq C(s,h_{\min}) \norm{W}_{H^s}^2 \,.
\label{i:tEs}
\end{equation}
Similarly to \eqref{e:dtEs} but with $\underline{W}=W$ we compute
\begin{eqnarray}
\Dt (\Lambda^s W,A \Lambda^s W) &=& (h_t  \Lambda^s W,\Lambda^s W) + ((AB)_x \Lambda^s W,\Lambda^s W)
\nonumber\\&& 
+\ 2([B,\Lambda^s ]W_x,A\Lambda^s W) - 2(\Lambda^s  F,A\Lambda^s W).
\label{e:dtEstilde}
\end{eqnarray}
We now revise the previous estimates of the four terms as follows.

\begin{itemize}
\item[1)] For the first term,
$\abs{h_t} = \alpha \abs{(hu)_x} \leq C(\alpha,\norm{W}_{W^{1, \infty}})$,
so
\begin{equation}
\abs{(h_t\Lambda^s  u, \Lambda^s  u)} \leq C\norm{W}_{H^s}^2.
\end{equation}

\item[2)] For the second term,
\begin{eqnarray}
&&\abs{((AB)_x\Lambda^s  W_x, \Lambda^s  W )}    \nonumber\\
&&\quad \leq  \abs{(\alpha g u_x \Lambda^s  \eta, \Lambda^s  \eta)} + \abs{2(g h_x \Lambda^s  \eta, \Lambda^s  u)}
		+\abs{(\alpha (hu)_x \Lambda^s  u, \Lambda^s  u)}   \nonumber\\
&&\quad \leq   C(s,\norm{W}_{W^{1,\infty}} )\norm{W}_{H^s}^2.
\end{eqnarray}

\item[3)] For the third term, using the Kato-Ponce commutator estimate
\eqref{kato ponce2} and the tame product estimate \eqref{moser tame2},
we get
\begin{eqnarray}
&&\abs{([B, \Lambda^s ] W_x, A \Lambda^s  W) }   \nonumber\\
&&\quad \leq  \abs{(\alpha [u, \Lambda^s ]\eta_x, g \Lambda^s  \eta)} + \abs{(\alpha [\eta, \Lambda^s  ]u_x, g\Lambda^s  \eta)} 
		+ \abs{(\alpha [u, \Lambda^s ]u_x, h\Lambda^s  u)}   \nonumber\\
&&\quad \leq   C(s,\norm{W}_{W^{1,\infty}} )\norm{W}_{H^s}^2.
\end{eqnarray}

\item[4)] For the fourth term, the estimate in Corollary~\ref{cor:fWbound}
yields
\begin{equation}
\norm{ f(W)  }_{H^s}  \leq   
C(s,\eps,h_{\min},E,\norm{W}_{W^{1,\infty}}) \norm{W}_{H^s}\,, 
\end{equation}
where $E=\tilde E_\star(W^0)$ is the constant energy of the solution.
Hence
\begin{equation}
\abs{(\Lambda^s  F, A \Lambda^s  W)} = \abs{\Lambda^s  f, h\Lambda^s  u} 
		\leq 
C(s,\epsl, \norm{W}_{W^{1,\infty}})
\norm{W}_{H^s}^2\,.
\end{equation}
\end{itemize}
Collecting everything yields
\begin{equation}
\partial_t {\tilde E^s(W)} \leq    C \norm{W}_{H^s}^2 \leq C \tilde E^s(W) \,,
\end{equation}
due to \eqref{i:tEs}. By Gronwall's inequality it follows $\tilde E^s(W(\cdot,t))$
remains bounded on $[0,T_{\max})$, hence the same is true for $\norm{W(\cdot,t)}_{H^s}$.

This completes the proof of Theorem~\ref{thm:continuation}.
} %%BLUE END
\end{proof}

\section{Derivative blow-up in finite time} \label{s:blowup}
In this section, the main goal is to show that solutions 
to \eqref{e:rsvh}, \eqref{e:rsvu} 
with certain initial conditions do exhibit derivative blow-up.
The general strategy is to show that derivatives of the classical
Riemann invariants satisfy coupled Ricatti-type equations 
that must exhibit blow-up.

% Equations
\subsection{Ricatti-type equations for derivatives of Riemann invariants}
Write the Riemann invariants $R_\pm$ of the classical shallow water system and the two corresponding characteristic speeds $\lambda_\pm$ as
\begin{eqnarray}
R_+ = u + 2\sqrt{gh},&& \lambda_+ = u + \sqrt{gh},    \nonumber\\
R_- = u - 2\sqrt{gh}, && \lambda_- = u - \sqrt{gh}.    \label{def: R lambda}
\end{eqnarray}
These quantities satisfy
\begin{equation}
\lambda_+ = \frac{1}{4}(3R_+ + R_-), \quad \lambda_- = \frac{1}{4}(R_+ + 3R_-).
\label{e:lambdapmR}
\end{equation}
Next, note that
the function inside the nonlocal term in \eqref{e:rsvu} takes the form
\begin{eqnarray}
2h^3u_x^2 - \half g h^2 h_x^2 = 2h^3(u_x^2 - \frac{1}{4}g h^{-1}h_x^2) = 2h^3 (\lambda_+)_x (\lambda_-)_x\,.
\end{eqnarray}
From this one finds that
the evolution equations for $R_\pm$ along characteristic curves take the form
\begin{eqnarray}
\frac{d^+}{dt} R_+ := (R_+)_t + \lambda_+ (R_+)_x = -2\epsl \iii_h^{-1}\partial_x\paren{h^3 (\lambda_+)_x (\lambda_-)_x},
\label{e:r plus} \\
\frac{d^-}{dt} R_- := (R_-)_t + \lambda_- (R_-)_x = -2\epsl \iii_h^{-1}\partial_x\paren{h^3 (\lambda_+)_x (\lambda_-)_x}.
\label{e:r minus}
\end{eqnarray}
Here $\frac{d^+}{dt}, \frac{d^-}{dt}$ indicate the derivatives along ``$+$'' and ``$-$'' characteristic curves, respectively.

Next, we derive evolution equations for the derivatives of these classical
Riemann invariants, writing
\begin{equation}
P_+ = (R_+)_x = u_x + \sqrt{\frac gh}h_x \,, \quad 
P_- = (R_-)_x = u_x - \sqrt{\frac gh}h_x \,.
\label{d:Ppm}
\end{equation}
Clearly
\begin{equation}\label{e:lambdax}
(\lambda_+)_x = \frac{1}{4}(3P_+ + P_-), \quad (\lambda_-)_x = \frac{1}{4}(P_+ + 3P_-)\,.
\end{equation}
Differentiating \eqref{e:r plus}--\eqref{e:r minus}, one obtains that $P_+$ and $P_-$ satisfy a system of Riccati-type equations containing 
a nonlocal term:
\begin{eqnarray}
\frac{d^+}{dt} P_+
        = - \frac{1}{4}(3P_+ + P_-)P_+
 - 2\epsl \partial_x\iii_h^{-1}\partial_x\paren{h^3 (\lambda_+)_x (\lambda_-)_x},   \label{e1:Pp}\\
\frac{d^-}{dt} P_-
        = - \frac{1}{4}(P_+ + 3P_-)P_- - 2\epsl \partial_x\iii_h^{-1}\partial_x\paren{h^3 (\lambda_+)_x (\lambda_-)_x}.   
\label{e1:Pm}
\end{eqnarray}

\blue{
In this system, the nonlocal operator $\Dx\iii_h^{-1}\Dx\circ h^3$ has a 
local part which we extract as in the previous section,
using the formula \eqref{e:nonlocal decomp3}.
}
This motivates us to introduce a primitive for the product 
$(\lambda_+)_x(\lambda_-)_x$, writing
\begin{equation}\label{d:G}
G(y,t) = \int_{-\infty}^y 
(\lambda_+)_x(\lambda_-)_x\,dx.
\end{equation}
In terms of this quantity we can write 
\begin{equation}\label{e:nonlocal decomp}
 - 2\epsl \partial_x\iii_h^{-1}\partial_x\paren{h^3 (\lambda_+)_x (\lambda_-)_x} = 
2(\lambda_+)_x(\lambda_-)_x - Q \,,
\end{equation}
where
\begin{equation} \label{d:Q}
  Q \defeq 2 \Dx\iii_h^{-1}(hG)\,.
\end{equation}
Using \eqref{e:lambdax} we find that the Ricatti-type evolution equations for
$P_\pm$ take the form
\begin{eqnarray}
\frac{d^+}{dt} P_+
&=& -\frac{3}{8}P_+^2 + P_+ P_- + \frac{3}{8}P_-^2  - Q \,, %2\partial_x \iii_h^{-1}(hG)   ,
        \label{e:p plus}\\
\frac{d^-}{dt} P_-
        &=& \phantom{-}\frac{3}{8}P_+^2 + P_+ P_- - \frac{3}{8}P_-^2  - Q\,. %2\partial_x \iii_h^{-1}(hG)   .   
        \label{e:p minus}
\end{eqnarray}
These two equations are of central importance because the nonlocal term $Q$ that appears here is 
essentially a constant, and after that \eqref{e:p plus}--\eqref{e:p minus} is a system whose behaviors are 
governed by the quadratic terms in $P_+$ and $P_-$.

To see that the integral in \eqref{d:G} is well-defined, note that
\begin{align}
|(\lambda_+)_x(\lambda_-)_x| 
&= \frac1{16}|(3P_+ + P_-)(P_+ + 3P_-)| 
\nonumber\\ & 
\leq \frac12 (P_+^2 + P_-^2)
\leq 2\left( u_x^2 + \frac gh h_x^2 \right).
\label{i:lambdapm}
\end{align}
It then follows from conservation of the relative energy $E_\star$ in 
\eqref{d:Estar} that as long as $h\ge h_{\min}>0$ we have the estimate
\begin{equation}\label{i:Gbound}
\norm{G(\cdot,t)}_{L^\infty} \leq \int_\R
|(\lambda_+)_x(\lambda_-)_x| \,dx \leq \frac2{h_{\min}^3} 
\int_\R
\paren{h^3 u_x^2 + gh^2 h_x^2}\,dx
\leq \frac {4 E_\star}{\epsl h_{\min}^3} \,.
\end{equation}

To handle the nonlocal term $Q$ in the Ricatti-type system \eqref{e:p plus}--\eqref{e:p minus}, 
we exploit Lemma \ref{lemma:ih-1No2} to address the $L^\infty$ bound on the solution of the elliptic operator 
$\iii_h$.
Lemma \ref{lemma:ih-1No2} together with the estimate in \eqref{i:Gbound} immediately yields
the following uniform bound for the nonlocal term $Q$ given by \eqref{d:Q}.
\begin{proposition} \label{p:Qbound} 
For any classical solution $W$ of \eqref{e:system} satisfying 
$0<h_{\min}\leq h\leq h_{\max}<\infty$ on a time interval $[0,T_\star)$,  we have
\begin{equation}\label{b:Q}
\|Q(\cdot,t)\|_{L^\infty} \leq  \frac{4}{\sqrt\epsl} \frac{h_{\max}^2}{h_{\min}^3}\|G(\cdot,t)\|_{L^\infty} 
\leq \frac{16}{\epsl^{3/2}} \frac{h_{\max}^2}{h_{\min}^6} E_\star.
\end{equation}
\end{proposition}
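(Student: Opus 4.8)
The plan is to obtain \eqref{b:Q} by directly combining the gradient bound of Lemma~\ref{lemma:ih-1No2} with the energy bound \eqref{i:Gbound}; no new estimate is required. First I would verify that the primitive $G(\cdot,t)$ from \eqref{d:G} lies in $\Clim$, so that Lemma~\ref{lemma:ih-1No2} applies: by \eqref{i:lambdapm} the integrand $(\lambda_+)_x(\lambda_-)_x$ is dominated pointwise by $2(u_x^2+\frac gh h_x^2)$, which is integrable on $\R$ because the relative energy $E_\star$ in \eqref{d:Estar} is finite and $h\ge h_{\min}>0$; hence $G(\cdot,t)$ is continuous, vanishes as $y\to-\infty$, and tends to the finite limit $\int_\R(\lambda_+)_x(\lambda_-)_x\,dx$ as $y\to+\infty$. (Here one uses that the classical solution $W$ carries the Sobolev regularity $h-h_\star,u\in H^s$, $s>\frac32$, provided by Theorem~\ref{thm:wellposedness}, which is exactly the hypothesis $h-h_\star\in H^s$ needed in Lemma~\ref{lemma:ih-1No2}.)

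Next, recalling $Q=2\Dx\iii_h^{-1}(hG)$ from \eqref{d:Q}, I would apply the second estimate in \eqref{e:vests} with $\phi=G$ and $v=\iii_h^{-1}(hG)$, giving $\norm{v_x}_{L^\infty}\le\frac{2}{\sqrt\epsl}\frac{h_{\max}^2}{h_{\min}^3}\norm{G(\cdot,t)}_{L^\infty}$, whence $\norm{Q(\cdot,t)}_{L^\infty}=2\norm{v_x}_{L^\infty}\le\frac{4}{\sqrt\epsl}\frac{h_{\max}^2}{h_{\min}^3}\norm{G(\cdot,t)}_{L^\infty}$, which is the first inequality of \eqref{b:Q}. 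Substituting the bound $\norm{G(\cdot,t)}_{L^\infty}\le 4E_\star/(\epsl h_{\min}^3)$ from \eqref{i:Gbound} then produces the second inequality $\norm{Q(\cdot,t)}_{L^\infty}\le\frac{16}{\epsl^{3/2}}\frac{h_{\max}^2}{h_{\min}^6}E_\star$.

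There is no genuine obstacle here: the analytic content has been front-loaded into Lemma~\ref{lemma:ih-1No2} (its maximum-principle plus Landau--Kolmogorov argument) and into the conservation of $E_\star$, and the present proof merely chains these together. The two points meriting a word of care are the membership $G(\cdot,t)\in\Clim$, needed so that $\iii_h^{-1}$ acts on $hG$ as in the lemma, and bookkeeping of the numerical constants—the factor $2$ in the definition of $Q$ and the factor $2$ in the pointwise bound $|(\lambda_+)_x(\lambda_-)_x|\le\frac12(P_+^2+P_-^2)\le 2(u_x^2+\frac gh h_x^2)$ used in \eqref{i:Gbound}. Since all constants depend only on $h_{\min}$, $h_{\max}$, $\epsl$, and $E_\star$, the bound is automatically uniform over the whole interval $[0,T_\star)$ on which $0<h_{\min}\le h\le h_{\max}<\infty$, as asserted.
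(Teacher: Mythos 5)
Your proof is correct and is essentially identical to the paper's, which obtains \eqref{b:Q} by exactly the same chaining: the gradient estimate in \eqref{e:vests} of Lemma~\ref{lemma:ih-1No2} applied to $v=\iii_h^{-1}(hG)$ together with the energy bound \eqref{i:Gbound}. The constant bookkeeping (the factor $2$ in \eqref{d:Q} and the $2/\sqrt\epsl$ in \eqref{e:vests}) also matches.
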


Now we are ready to state our main theorem.
\begin{theorem}  \label{thm:blowup}
Fix $\eps,\alpha>0$.
Then there exists compactly supported smooth initial data $W^0=(\eta^0,u^0)$ 
of the IVP \eqref{e:system}, having arbitrarily small relative energy $\tilde E_\star$, 
for which the derivatives of the solution 
will blow up in finite time. 
The precise meaning of this is that there exists $T\in(0,\infty)$ 
such that the solution exists and stays smooth for all $(x,t)\in \RR\times [0,T)$, and
\begin{equation}
\sup_{\RR\times[0,T)} P_+(x,t)+|P_-(x,t)| <\infty,
\end{equation}
but
\begin{equation}
\inf_{x\in\RR} 
P_+(x,t) \to -\infty \quad\text{as $t\uparrow T$}.
\label{lim:Ppblowup}
\end{equation}
\end{theorem}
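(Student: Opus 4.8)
The plan is to construct the initial data and then run a Riccati-comparison argument on the coupled system \eqref{e:p plus}--\eqref{e:p minus}, using Proposition~\ref{p:Qbound} to treat the nonlocal term $Q$ as an (essentially constant) bounded forcing. First I would fix a small amplitude parameter and choose compactly supported smooth $(\eta^0,u^0)$ so that the relative energy $\tilde E_\star$ is as small as we like; by Proposition~\ref{p:Ebound}(b) the depth then stays trapped in a fixed interval $[h_{\min},h_{\max}]$ with $h_{\min}$ close to $h_\star$, and consequently, by Proposition~\ref{p:Qbound}, $\|Q(\cdot,t)\|_{L^\infty}\le M$ for a constant $M$ that is $O(E_\star)$ — in particular as small as we wish relative to the size of the quadratic terms we are about to create. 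The key design choice for the data (this is where Lemma~\ref{lem:ICs} enters, as promised in the introduction) is to make $P_-^0=(R_-)^0_x$ small in sup norm everywhere, while making $P_+^0=(R_+)^0_x$ take a large negative value $-N$ at some point $x_0$, with $N$ chosen so that $\tfrac38 N^2$ dominates $M$ and all cross terms by a definite margin. Energy smallness is compatible with $P_+^0$ being large and negative at a point because the $L^\infty$ norm of a derivative is not controlled by the $H^1$ energy: one takes a narrow bump.

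Next I would track the solution along the ``$+$'' characteristic emanating from $x_0$. Along this curve, \eqref{e:p plus} reads $\frac{d^+}{dt}P_+ = -\tfrac38 P_+^2 + P_+P_- + \tfrac38 P_-^2 - Q$. The strategy is a bootstrap: assume on a maximal subinterval that $\sup_x|P_-(x,t)|\le \delta$ for a suitable small $\delta$, and on that subinterval show both that $P_+$ along the distinguished characteristic blows down to $-\infty$ in finite time, and that the hypothesis on $P_-$ is self-improving (or at least preserved). For the first part, once $P_+\le -N$ with $N$ large, the right side is $\le -\tfrac38 P_+^2 + \delta|P_+| + \tfrac38\delta^2 + M \le -\tfrac14 P_+^2$ say, so $P_+$ satisfies a Riccati inequality $\frac{d^+}{dt}P_+\le -c P_+^2$ forcing blow-up to $-\infty$ by some time $T<\infty$ (explicitly controlled by $1/(cN)$). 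For the $P_-$ bound, I would use \eqref{e:p minus}: $\frac{d^-}{dt}P_- = \tfrac38 P_+^2 + P_+P_- - \tfrac38 P_-^2 - Q$ — here the $\tfrac38 P_+^2$ term is large and positive, which is a genuine issue — so one cannot hope $P_-$ stays small pointwise near the characteristic where $P_+$ is huge; instead the correct statement (matching \eqref{lim:Ppblowup}, which only asserts $\inf P_+\to-\infty$ while $\sup P_+$ and $|P_-|$ stay bounded up to $T$) is that $P_-$ stays bounded, and $P_+$ stays bounded above, on $[0,T)$. So I would instead prove two-sided bounds: $P_+\le C_1$ and $|P_-|\le C_1$ on $\RR\times[0,T)$ via a Gronwall argument on $m(t)=\sup_x(\,|P_-(x,t)| + P_+(x,t)_+\,)$ using the quadratic structure, while separately the single downward trajectory forces $\inf P_+\to-\infty$.

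The main obstacle, and where I expect to spend the real effort, is the coupling through the $\tfrac38 P_+^2$ source in the $P_-$ equation: a priori a large negative $P_+$ could feed back and destroy the control on $P_-$ that the Riccati argument for $P_+$ relied on. The resolution must exploit that the downward blow-up of $P_+$ happens on an ever-narrowing set (a single characteristic in the limit), so its contribution to $P_-$ along any fixed ``$-$'' characteristic is integrable in $t$ — this is the analogue of the classical shallow-water compression argument, but complicated by $Q$ and by the altered quadratic coefficients $(-\tfrac38,1,\tfrac38)$ noted in the introduction (and in section~\ref{s:asym profile}). Concretely I would (i) first establish on a short time interval $[0,T_1]$, using Theorem~\ref{thm:wellposedness}, that the solution is smooth with depth in $[h_{\min},h_{\max}]$ and $\|W_x\|_{L^\infty}$ bounded; (ii) on that interval run the comparison argument choosing $N$ large enough (hence the bump narrow enough, hence $\tilde E_\star$ small — all parameters made consistent) that the Riccati blow-up time $T$ for $P_+$ along the special characteristic satisfies $T<T_1$ is \emph{not} required; rather $T$ is the first time $\inf P_+=-\infty$, and we show the competing bad scenarios (depth reaching $0$, or $\sup_x(|P_-|+P_{+,+})$ blowing up) cannot happen before $T$, using Corollary~\ref{cor:blowup} / Theorem~\ref{thm:continuation} together with the a priori bound $m(t)\le C$; (iii) conclude that $T$ is finite and \eqref{lim:Ppblowup} holds. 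The bookkeeping — choosing $N$, $\delta$, the bump width, and verifying all the inequalities of Lemma~\ref{lem:ICs} simultaneously — is routine but delicate, and getting the constants to close is the crux.
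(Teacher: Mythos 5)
Your strategy coincides with the paper's: pass to the Riccati system \eqref{e:p plus}--\eqref{e:p minus}, treat $Q$ as a bounded forcing via Proposition~\ref{p:Qbound}, drive $P_+$ down along one ``$+$'' characteristic, and close a continuation argument for $\sup P_+$ and $\sup|P_-|$ by bounding $\int_0^t P_+^2$ along each ``$-$'' characteristic. You have correctly isolated the crux (the $\tfrac38P_+^2$ source in the $P_-$ equation) and correctly guessed that the rescue comes from concentration of the ``$+$'' characteristics. But at exactly that point the proposal stops at a heuristic (``the blow-up set narrows, so the contribution is integrable''), and this step is neither routine bookkeeping nor a Gronwall estimate on $m(t)=\sup_x(|P_-|+(P_+)_+)$, which, as you yourself observe, cannot close by itself.

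The missing mechanism is quantitative and has two parts (Lemma~\ref{lem:key}). First, the exact cubic cancellation \eqref{e:plus derivative 0}: since $\frac{d^+}{dt}P_+=-\tfrac38P_+^2+\dots$ while $(\lambda_+)_x=\tfrac34P_++\dots$, the quantity $\frac{\partial X_+}{\partial\xi}P_+^2$ obeys
\[
\frac{d}{dt}\Bigl(\frac{\partial X_+}{\partial\xi}P_+^2\Bigr)
=\frac{\partial X_+}{\partial\xi}\,P_+\bigl(3P_-(\lambda_+)_x-2Q\bigr),
\]
i.e.\ it is conserved up to terms controlled by $\sup|P_-|$ and $\|Q\|_{L^\infty}$; this is the precise sense in which focusing offsets blow-up, and it works only because of the rSV coefficient $-\tfrac38$ (for classical shallow water the near-conserved combination is $\frac{\partial X}{\partial\xi}v$, not $\frac{\partial X}{\partial\xi}v^2$). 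Second, a change of variables along the ``$-$'' characteristic: parametrizing time by the foot point $\xi_0(s)$ of the ``$+$'' characteristic crossing it at time $s$, one has $-\frac{\partial X_+}{\partial\xi}\,\xi_0'(s)=\lambda_+-\lambda_-=2\sqrt{gh}$, hence
\[
\int_0^t P_+^2(X_-(\zeta_0,s),s)\,ds
=\int_{\xi_0(t)}^{\zeta_0}\frac{P_+^2}{2\sqrt{gh}}\,\frac{\partial X_+}{\partial\xi}\,d\xi
\le \frac{1}{\sqrt{2gh_\star}}\bigl(\|P_+(\cdot,0)\|_{L^2}^2+\text{error}\bigr),
\]
and $\|P_+(\cdot,0)\|_{L^2}^2$ is controlled by the conserved energy as in \eqref{b:Ppm}. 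Without these two identities the bootstrap on $|P_-|$ does not close. Two smaller points: you need a lower Riccati bound such as \eqref{idm:Pp} to upgrade $\liminf_{t\uparrow T}\inf_xP_+=-\infty$ to the asserted limit \eqref{lim:Ppblowup}; and the paper never needs $\|Q\|_{L^\infty}$ small, only bounded, so smallness of $E_\star$ relative to $N$ is not the operative mechanism.
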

\begin{remark}
i) The blow-up behavior described 
in this theorem implies that 
\[
\inf_{x\in\RR} u_x(x,t) \to -\infty
\quad\text{and}\quad
\inf_{x\in\RR} 
h_x(x,t) \to -\infty 
\qquad\text{as $t\uparrow T$}, 
\]
see \eqref{e:uxhx} and \eqref{b:h} below.

ii) We will show that blow-up as described in the theorem occurs
for any initial data that satisfy certain explicit upper bounds
on relative energy $E_\star$, $|P_-|$, and $P_+$, 
such that $\inf P_+(\cdot,0)$ is sufficiently negative; 
see Lemma~\ref{lem:ICs} below.
\end{remark}
%

% proof of the main result
\subsection{Proof of derivative blow-up}
Next, we will sketch some of the fundamental ideas of the main proof.
Our goal is to construct initial data such that 
$P_+$ blows up while $P_-$ stays bounded. 
If indeed $P_-$ stays bounded then 
it is rather easy to infer from \eqref{e:p plus} 
$P_+$ blows up quickly if $P_+$ is initially large on some individual characteristic.
However, to show $P_-$ stays bounded everywhere while $P_+$ blows up somewhere,
\eqref{e:p minus} requires us to show that
the integral of $P_+^2$ along all the ``-'' characteristics has to remain bounded. 
A principal difficulty is that the characteristic speeds 
depend (nonlocally) on the solution itself.
Moreover, due to \eqref{e:lambdax} we expect both $(\lambda_+)_x$ and $(\lambda_-)_x$ 
to blow up to $-\infty$, as $P_+$ does. 
This indicates that characteristics curves are concentrating in the vicinity of the singularity. 

Let us introduce the flow maps $X_+, X_-:\RR\times [0, \infty) \to \RR$ along the ``$+$'' and ``$-$'' characteristic curves,
defined through
\begin{equation}
\left\{\begin{array}{ll}
\displaystyle  \frac{\partial X_+}{\partial t}(\xi, t) = \lambda_+(X_+(\xi, t), t)\\[8pt]  X_+(\xi, 0) = \xi
\end{array}
\right., \quad
\left\{\begin{array}{ll}
\displaystyle \frac{\partial X_-}{\partial t}(\zeta, t) = \lambda_-(X_-(\zeta, t), t)\\[8pt]  X_-(\zeta, 0) = \zeta
\end{array}
\right.,    \label{def: flow map}
\end{equation}
where $\xi, \zeta$ are Lagrangian variables. Differentiating the first set of equations in $\xi$, one obtains
\begin{equation}
\paren{\frac{\partial X_+}{\partial\xi}}_t = (\lambda_+)_x \frac{\partial X_+}{\partial\xi}, 
        \quad \frac{\partial X_+}{\partial\xi}(\xi, 0) = 1.     \label{e:plus char.}
\end{equation}
So, if some constant $L\geq (\lambda_+)_x$ everywhere along a certain ``+'' characteristic curve
for time in $[0,t]$, it follows that $\frac{\partial X_+}{\partial\xi} \leq e^{Lt}$ everywhere on the curve
in this time interval.  The same holds true for the ``-'' characteristic curves.
When it happens that $L\leq0$, 
nearby characteristics curves focus towards each other and concentrate as time increases.

The key to the proof of blow-up will be to establish two things:
\begin{itemize}
\item[(a)] $\frac{\partial X_+}{\partial \xi}P_+^2$ is close to constant along the ``+'' characteristic curves; 
i.e. the concentrating effect 
of the ``+'' characteristic curves and the blow-up effect of $P_+^2$ offset and exactly balance each other.
\item[(b)] The integrals of $P_+^2$ along the ``-'' characteristic curves are bounded.
\end{itemize}
We shall use (a) to derive (b). The exact meaning of (a) rests on the fact that,
\blue{temporarily fixing $\xi$ and abusing notation 
to write $P_+=P_+(X_+(\xi,t),t)$,
\begin{eqnarray}
&&\frac{d}{dt}\paren{\frac{\partial X_+}{\partial \xi}P_+^2} = 
\paren{\frac{\D}{\D t} \frac{\partial X_+}{\partial \xi}}P_+^2 
+ \frac{\partial X_+}{\partial \xi}2P_+\frac{d}{dt}(P_+)  \nonumber\\
&&\quad = (\lambda_+)_x \frac{\partial X_+}{\partial \xi}P_+^2 
        + \frac{\partial X_+}{\partial \xi}2P_+ \paren{ \half (\lambda_+)_x (3 P_--P_+) - Q} \nonumber\\
&&\quad= \frac{\partial X_+}{\partial \xi}P_+\paren{3P_-(\lambda_+)_x  - 2 Q} \,.
\label{e:plus derivative 0}
\end{eqnarray}
}Here the important point is that the highest order terms (cubic in $P_+$) match exactly and go away.

\begin{proof}[Proof of Theorem~\ref{thm:blowup}] 
We are now ready to begin the main argument, proceeding in several steps.

\textbf{Step Zero}.
We will work with smooth initial data with relative energy sufficiently small
so that we can apply Proposition~\ref{p:Ebound}(b) and Corollary~\ref{cor:blowup}. 
We introduce several explicit constants in this proof chosen as follows: 
The initial depth at infinity $h_\star > 0$ is arbitrary. 
We define positive constants $C_1(\epsl), C_2(\epsl, h_\star), C_3(\epsl, h_\star)$ explicitly by
\begin{equation}\label{d:C123}
C_1(\epsl) \defeq  \frac{6g}{\sqrt{\epsl}},  \quad
 C_2(\epsl, h_\star) \defeq \frac{72C_1}{\sqrt{\epsl}h_\star} , \quad C_3 (\epsl, h_\star) \defeq \frac{16C_1}{\sqrt{2gh_\star}}.
\end{equation}

We let $T_{\max}$ denote the maximal time of existence of the smooth solution,
and establish some preliminary bounds for 
solutions whose relative energy from \eqref{d:Estar} satisfies
\begin{equation}
E_\star \leq \frac{1}{6}g\sqrt{\epsl}h_\star^3.
\label{initial energy}
\end{equation}
Using this bound in Proposition~\ref{p:Ebound}(b) we get $h_E\ge\frac12h_\star$, and from this
and Remark~\ref{rem:hbounds}(iii) it follows that up to the maximal time of existence 
the depth satisfies the bounds
\begin{equation} \label{b:h}
h_{\min} \leq h \leq h_{\max}\,,
\quad\text{with}\quad  
h_{\min}=\frac12 h_\star\,, \quad h_{\max}=\frac32 h_\star\,. 
\end{equation}
Next, we can bound the fluid velocity by a Sobolev-like inequality,
writing
\begin{align}
\|u\|_{L^\infty}^2 
&\leq \int_\R 2|uu_x|\,dx \leq 
\int_\R (h u^2 + \eps h^3 u_x^2) \frac{dx}{\sqrt\epsl h^2}
\leq \frac{2 E_\star}{\sqrt\epsl h_{\min}^2} \leq   gh_{\max}.
\label{b:u}
\end{align}
Hence the characteristic speeds from \eqref{def:  R lambda} are bounded by 
\begin{equation}\label{b:lambdapm}
\|\lambda_\pm\|_{L^\infty} \leq 2\sqrt{g h_{\max}}
\end{equation}
Next, as in \eqref{i:lambdapm}--\eqref{i:Gbound},
we find that
\begin{equation}\label{b:Ppm}
\norm{P_\pm(\cdot, t)}_{L^2}^2 \leq 
\frac{4 E_\star}{\epsl h_{\min}^3} < C_1.
\end{equation}
Finally, applying Proposition~\ref{p:Qbound}, % with 
we deduce that 
\begin{equation}\label{b:Q2}
\|Q(\cdot,t)\|_{L^\infty}  
\leq  \frac{4 h_{\max}^2}{\sqrt\epsl h_{\min}^3} C_1 = \frac{72 C_1}{\sqrt\epsl h_\star} \leq C_2.
\end{equation}

\textbf{Step One}.
The key to the proof will involve obtaining bounds on the quantity
\begin{equation}\label{d:Mt}
M(t) \defeq \sup_{x\in\RR,s\in[0,t]} P_+(x,s)+ 
\sup_{x\in\RR,s\in[0,t]} |P_-(x,s)| 
\end{equation}
that are valid on a fixed time interval 
independent of any lower bounds on $P_+$.

\begin{lemma}\label{lem:Mbound}
There exists $T_\star = T_\star(\eps,h_*)>0$ independent of the initial data,
such that if \eqref{initial energy} holds and also
\begin{equation}\label{ib:M0}
M(0) \leq \frac14 C_3,
\end{equation}
then 
\[
M(t) \leq C_3 \quad\mbox{for all } t\in[0,T_{\max}\wedge T_\star]\,.
\]
\end{lemma}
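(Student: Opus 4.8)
The plan is to argue by continuation. Let $T_1$ be the largest time in $[0,T_{\max}\wedge T_\star]$ with $M(s)\le C_3$ for all $s\le T_1$; since $M$ is continuous and nondecreasing and $M(0)\le\tfrac14 C_3$ by \eqref{ib:M0}, it suffices to show that, for $T_\star=T_\star(\eps,h_\star)$ chosen small enough, the mere fact that $M\le C_3$ on $[0,T_1]$ already forces $M(T_1)<C_3$, so that $T_1$ cannot be an interior point of the interval. Throughout $[0,T_1]$ we then have at our disposal the Step Zero estimates \eqref{b:h}, \eqref{b:Ppm}, \eqref{b:Q2}, together with the bootstrap hypotheses $\sup_x P_+(\cdot,s)\le C_3$ and $\sup_x|P_-(\cdot,s)|\le C_3$; note that each $P_\pm(\cdot,s)$ is continuous and vanishes at infinity (as $P_\pm\in H^{s-1}$ with $s-1>\tfrac12$), so these suprema are attained.

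The bound on $\sup_x P_+$ is the easy direction, because the quadratic term $-\tfrac38 P_+^2$ in \eqref{e:p plus} has the favorable sign. I would track $p_+(s):=\sup_x P_+(x,s)$ along the ``$+$'' characteristic through a maximizing point; by the standard argument of differentiating a running supremum along characteristics, $\frac{d}{ds}p_+\le \frac{d^+}{dt}P_+$ there, so from \eqref{e:p plus} and the bootstrap bounds $|P_-|\le C_3$, $|Q|\le C_2$ one gets $\frac{d}{ds}p_+\le -\tfrac38 p_+^2 + C_3 p_+ + \tfrac38 C_3^2 + C_2\le \tfrac{11}{8}C_3^2 + C_2$, whence $p_+(t)\le \tfrac14 C_3 + (\tfrac{11}{8}C_3^2+C_2)t$, which is at most $\tfrac38 C_3$ on $[0,T_\star]$ once $T_\star$ is small in terms of $C_2,C_3$.

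The real work is the bound on $\sup_x|P_-|$, where the term $+\tfrac38 P_+^2$ in \eqref{e:p minus} is dangerous since $P_+$ has no a priori lower bound. Here I would integrate \eqref{e:p minus} along a ``$-$'' characteristic $s\mapsto X_-(\zeta,s)$; the only term needing genuine control is $\int_0^t P_+(X_-(\zeta,s),s)^2\,ds$, since the $\int_0^t|P_+P_-|\,ds$ term is then absorbed by Cauchy--Schwarz (it is $\le C_3\sqrt t\,(\int_0^t P_+^2\,ds)^{1/2}$) and the remaining terms are $O(t)$. To control $\int_0^t P_+^2\,ds$ along the ``$-$'' characteristic I would exploit the near-conservation identity \eqref{e:plus derivative 0}: writing $\Phi(\xi,t):=\frac{\partial X_+}{\partial\xi}(\xi,t)\,P_+(X_+(\xi,t),t)^2$, the cancellation of the cubic-in-$P_+$ terms in \eqref{e:plus derivative 0} leaves $|\frac{d}{dt}\Phi|\le \tfrac94 C_3\,\Phi + (\tfrac34 C_3^2 + 2C_2)\big(\tfrac{\partial X_+}{\partial\xi}\big)^{1/2}\Phi^{1/2}$, and since $(\lambda_+)_x=\tfrac14(3P_++P_-)\le C_3$ gives $\tfrac{\partial X_+}{\partial\xi}\le e^{C_3 t}$ by \eqref{e:plus char.}, a Gronwall argument for $\Phi^{1/2}$ yields $\Phi(\xi,t)\le e^{\frac94 C_3 t}\big(2P_+(\xi,0)^2 + C t^2\big)$ with $C=C(\eps,h_\star)$. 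Now I would pass from the ``$-$'' characteristic to the ``$+$''-Lagrangian label: at each time $s$ the point $X_-(\zeta,s)$ lies on the ``$+$'' characteristic $X_+(\xi(s),s)$, and because $\lambda_+>\lambda_-$ the map $s\mapsto\xi(s)$ is strictly decreasing with $|\dot\xi|=2\sqrt{gh}\big/\tfrac{\partial X_+}{\partial\xi}$, so that $\int_0^t P_+(X_-(\zeta,s),s)^2\,ds=\int_{\xi(t)}^{\xi(0)}\Phi(\xi,s(\xi))\,\frac{d\xi}{2\sqrt{gh}}$; crucially the $\xi$-interval here has length $\xi(0)-\xi(t)\le 4\sqrt{gh_{\max}}\,t$, since both characteristic families move at Eulerian speed $\le 2\sqrt{gh_{\max}}$ by \eqref{b:lambdapm}. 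Inserting the bound on $\Phi$ and using $\int_\RR P_+(\xi,0)^2\,d\xi< C_1$ from \eqref{b:Ppm} (with $h\ge h_{\min}$ from \eqref{b:h}), the leading contribution to $\int_0^t P_+^2\,ds$ is a fixed small fraction of $C_3$ --- this is precisely what the factor $16$ in $C_3=16C_1/\sqrt{2gh_\star}$ in \eqref{d:C123} is arranged to ensure --- while the error contribution is $O(t^3)$. Feeding this back into the integrated form of \eqref{e:p minus}, using $|P_-(\zeta,0)|\le\tfrac14 C_3$ and \eqref{b:Q2}, gives $\sup_x|P_-(\cdot,t)|\le \tfrac14 C_3 + (\text{small fraction of }C_3) + O(t^{1/2}) + O(t) < \tfrac12 C_3$ once $T_\star$ is small; together with $p_+\le\tfrac38 C_3$ this yields $M(T_1)<C_3$, the desired contradiction.

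I expect the main obstacle to be exactly the last step: converting the ``near-conservation'' of $\Phi$ into a quantitative estimate with constants sharp enough to close the bootstrap --- in particular, checking that the error terms in the Gronwall estimate for $\Phi$, once integrated over the short-time but not small-measure $\xi$-interval swept out by the ``$-$'' characteristic, contribute only a small multiple of $C_3$, and that the bookkeeping with $C_1,C_2,C_3$ against the threshold $\tfrac14 C_3$ in \eqref{ib:M0} actually closes. The remaining ingredients --- the running-supremum differentiation, the elementary bound $\tfrac{\partial X_+}{\partial\xi}\le e^{C_3 t}$, the change of variables to ``$+$''-Lagrangian coordinates, and the use of $L^2$ and energy bounds from Step Zero --- are routine.
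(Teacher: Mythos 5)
Your proposal is correct and follows essentially the same route as the paper: a bootstrap on $M(t)\le C_3$, the easy one-sided bounds on $\sup P_+$ and $\inf P_-$ from the favorable signs of the quadratic terms, and the hard bound on $\sup P_-$ reduced to controlling $\int_0^t P_+^2$ along ``$-$'' characteristics via the change of variables to the ``$+$''-Lagrangian label and the near-conservation of $\Phi=\frac{\partial X_+}{\partial\xi}P_+^2$ from \eqref{e:plus derivative 0}, with the factor $16$ in $C_3$ absorbing $\|P_+(\cdot,0)\|_{L^2}^2<C_1$. The one place you diverge is in quantifying the near-conservation: you run a pointwise Gronwall estimate on $\sqrt\Phi$ along each ``$+$'' characteristic (using $\frac{\partial X_+}{\partial\xi}\le e^{C_3 t}$), whereas the paper bounds the accumulated variation $\aaa=\iint\bigl|\frac{d}{d\tau}\Phi\bigr|\,d\tau\,d\xi$ by a self-referential inequality $\aaa\le 3C_3t(C_1+\aaa)+O(t^2)$ and absorbs $\aaa$ for small $t$; both close, and your bookkeeping indeed lands comfortably below the threshold since you only need $\frac38\int P_+^2$ rather than $3\int P_+^2$.
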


Taking this result for granted for the moment, let us complete the proof of Theorem~\ref{thm:blowup}.  
We study solutions with smooth initial data that satisfy the relative energy bound
\eqref{initial energy} and the (one-sided) sup bound \eqref{ib:M0}.

We first claim that under a further condition on initial data, necessarily $T_{\max}\leq T_\star$.
We argue as follows.  From \eqref{d:Ppm} we infer that if $T_{\max}>T_\star$, 
then on the time interval $[0,T_\star]$
the norm $\norm{W(\cdot,t)}_{H^2}$ is bounded and hence so is $ \norm{P_\pm}_{L^\infty}$.

From \eqref{e:p plus}, however, using the inequality $P_+P_-\leq \frac18 P_+^2+2P_-^2$ we find that
along any ``+'' characteristic $x=X_+(\xi,t)$,
\begin{equation}
\label{idp:Pp}
\frac{d^+}{dt} P_+ \leq  -\frac{3}{8}P_+^2 + P_+ P_- + \frac{3}{8}P_-^2 + C_2 
\leq -\frac{1}{4}P_+^2 + 3C_3^2 + C_2 \leq -\frac18 P_+^2 \,,
\end{equation}
provided \blue{$\frac18 P_+(\xi,t)^2 \ge 3C_3^2+C_2$ at $t=0$
(for then $P_+^2$ is increasing).}
Choose $\kappa_0<0$ so large that 
\begin{equation} \label{d:kappa0}
\frac18 \kappa_0^2 \ge 3C_3^2+C_2 \quad\text{and}\quad \kappa_0 <  - \frac 8{T_\star}\,,
\end{equation}
and set $\kappa(t) = \kappa_0(1+\frac18 \kappa_0 t)\inv$.
Since $\kappa' = -\frac18\kappa^2$,  
it follows that if $P_+(\xi,0) \leq \kappa_0$ then
\begin{equation}
P_+(\xi,t) \leq \kappa(t) \to -\infty 
\quad \text{as} \quad t \uparrow -{8}\kappa_0\inv < T_\star.
\end{equation}
This proves the following.
\begin{lemma} Necessarily $T_{\max}\leq T_\star$, if initially
\eqref{initial energy} and \eqref{ib:M0} hold, and
\begin{equation}\label{ib:Pp}
\inf_{\xi\in\RR} P_+(\xi,0)< \kappa_0\,.
\end{equation}
\end{lemma}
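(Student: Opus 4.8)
The plan is to argue by contradiction: assume the smooth solution persists past $T_\star$, i.e.\ $T_{\max}>T_\star$. Then $[0,T_\star]$ is a compact subinterval of $[0,T_{\max})$, so the solution stays smooth there and in particular $\|P_\pm(\cdot,t)\|_{L^\infty}$ is bounded on $[0,T_\star]$ (recall $P_\pm=(R_\pm)_x$ involves only $u_x$ and $h_x$). At the same time, the hypotheses \eqref{initial energy} and \eqref{ib:M0} allow us to invoke Lemma~\ref{lem:Mbound} on the interval $[0,T_{\max}\wedge T_\star]=[0,T_\star]$, giving $|P_-(x,t)|\le M(t)\le C_3$ there, while \eqref{b:Q2} gives $\|Q(\cdot,t)\|_{L^\infty}\le C_2$. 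These are precisely the ingredients needed to close a Riccati-type differential inequality along a single $+$ characteristic.

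Next I would fix a Lagrangian label $\xi$ with $P_+(\xi,0)<\kappa_0$, which exists by \eqref{ib:Pp}, and follow $P_+$ along the $+$ characteristic $t\mapsto X_+(\xi,t)$; this curve is defined on all of $[0,T_{\max})$ since $\lambda_+$ is bounded by \eqref{b:lambdapm} and Lipschitz in $x$ while the solution is smooth. Writing $P_+(t)=P_+(X_+(\xi,t),t)$ and combining \eqref{e:p plus} with the bounds $|P_-|\le C_3$, $\|Q\|_{L^\infty}\le C_2$ and the elementary inequality $P_+P_-\le\frac18 P_+^2+2P_-^2$, one obtains the differential inequality \eqref{idp:Pp}, namely $\frac{d}{dt}P_+\le-\frac18 P_+^2$, valid whenever $\frac18 P_+(t)^2\ge 3C_3^2+C_2$.

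Then I would compare $P_+(t)$ with the solution $\kappa(t)=\kappa_0(1+\frac18\kappa_0 t)\inv$ of $\kappa'=-\frac18\kappa^2$, $\kappa(0)=\kappa_0$ (legitimate because $P_+(0)\le\kappa_0$), concluding $P_+(t)\le\kappa(t)$ on the common interval of existence. Since $\kappa(t)\to-\infty$ as $t\uparrow-8\kappa_0\inv$, and $-8\kappa_0\inv<T_\star$ by the second inequality in \eqref{d:kappa0}, it follows that $P_+$ blows up to $-\infty$ before time $T_\star<T_{\max}$, contradicting the boundedness of $\|P_+(\cdot,t)\|_{L^\infty}$ on $[0,T_\star]$. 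Hence $T_{\max}\le T_\star$.

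The only place a short argument is genuinely required---and hence the main obstacle, such as it is---is the propagation in time of the side condition $\frac18 P_+(t)^2\ge 3C_3^2+C_2$ under which \eqref{idp:Pp} is valid. It holds at $t=0$ by the first inequality in \eqref{d:kappa0}; and while it holds, $\frac{d}{dt}P_+\le-\frac18 P_+^2<0$ keeps $P_+$ negative and strictly decreasing, so $P_+^2$ is increasing and the condition persists. A standard continuity/bootstrap argument then upgrades \eqref{idp:Pp}---and therefore the comparison with the blowing-up ODE for $\kappa$---to the whole lifespan of $P_+$, rather than merely the initial instant.
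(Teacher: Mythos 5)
Your proposal is correct and follows essentially the same route as the paper: contradiction with $T_{\max}>T_\star$, invoking Lemma~\ref{lem:Mbound} and the bound \eqref{b:Q2} to close the Riccati inequality \eqref{idp:Pp} along a single ``+'' characteristic, and comparison with $\kappa(t)=\kappa_0(1+\frac18\kappa_0 t)\inv$. The bootstrap you spell out for propagating the condition $\frac18 P_+^2\ge 3C_3^2+C_2$ is exactly the point the paper compresses into the parenthetical remark that $P_+^2$ is then increasing.
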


Now, the essential point is that it is straightforward to construct smooth initial data
that satisfy the required bounds to this point.
We omit the proof of the following.
\begin{lemma} \label{lem:ICs}
There exist smooth initial data $W^0$ of compact support in $\RR$
such that $E_\star$ is arbitrarily small, 
$M(0)$ satisfies \eqref{ib:M0}, and $P_+(\cdot,0)$ satisfies \eqref{ib:Pp}.
\end{lemma}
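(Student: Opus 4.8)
The plan is to prescribe the data through the classical Riemann invariants $R_\pm^0 = u^0 \pm 2\sqrt{gh^0}$ rather than through $(\eta^0,u^0)$ directly, and to take $R_-^0 \equiv -2\sqrt{gh_\star}$ constant. With this choice $P_-^0 \equiv 0$, so that by \eqref{d:Ppm} and \eqref{d:Mt} we have $M(0)=\sup_{\RR} P_+^0$, and the whole task reduces to constructing a single smooth, compactly supported profile $P_+^0$ with $\int_\RR P_+^0 = 0$, $\sup_\RR P_+^0 \le \tfrac14 C_3$, $\inf_\RR P_+^0 < \kappa_0$, and with $\|P_+^0\|_{L^1}+\|P_+^0\|_{L^2}$ as small as we please. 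The vanishing-mean condition makes $R_+^0(x):=2\sqrt{gh_\star}+\int_{-\infty}^x P_+^0$ constant near $\pm\infty$, so that $u^0=\tfrac12(R_+^0+R_-^0)$ and $h^0=\tfrac1{16g}(R_+^0-R_-^0)^2$ define a compactly supported velocity and a compactly supported perturbation of $h_\star$; and since $\|h^0-h_\star\|_{L^\infty}+\|u^0\|_{L^\infty}\le C(g,h_\star)\|P_+^0\|_{L^1}$, once that $L^1$ norm is small we also get $h^0\ge\tfrac12 h_\star>0$, so \eqref{nonzero depth condition} holds and Theorem~\ref{thm:wellposedness} applies. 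We then set $\eta^0:=(h^0-h_\star)/\alpha$, and at $t=0$ we have $P_\pm(\cdot,0)=(R_\pm^0)_x$, i.e. $P_+(\cdot,0)=P_+^0$ and $P_-(\cdot,0)\equiv0$.

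To build the profile I would fix two nonnegative bumps $\varphi,\chi\in C_c^\infty(\RR)$ with disjoint supports, $\int\varphi=\int\chi=1$ and $\max\varphi=\max\chi=1$, put $N:=1+|\kappa_0|$, and for a small scale $\delta\in(0,1)$ set $L:=8N\delta/C_3$ and $P_+^0(x):=-N\,\varphi(x/\delta)+\tfrac{N\delta}{L}\,\chi\big((x-1)/L\big)$. For $\delta$ small the two bumps have disjoint supports, $\int_\RR P_+^0=-N\delta+N\delta=0$, $\inf_\RR P_+^0=-N<-|\kappa_0|=\kappa_0$, and $\sup_\RR P_+^0=N\delta/L=\tfrac18 C_3<\tfrac14 C_3$, while $\|P_+^0\|_{L^1}=2N\delta$ and $\|P_+^0\|_{L^2}^2=N^2\delta\,\|\varphi\|_{L^2}^2+\tfrac18 C_3\,N\delta\,\|\chi\|_{L^2}^2$, both of which tend to $0$ as $\delta\to0$ with $N$ (equivalently $\kappa_0$ from \eqref{d:kappa0}) held fixed; only the magnitude of $\kappa_0$ enters here.

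It then remains to verify that the relative energy \eqref{d:Estar} (with $u_\star=0$) tends to $0$ along this family. From $P_-^0\equiv0$ and \eqref{d:Ppm} one has $u_x^0=\tfrac12 P_+^0$ and $h_x^0=\tfrac12\sqrt{h^0/g}\,P_+^0$, so using $\tfrac12 h_\star\le h^0\le 2h_\star$ the two gradient contributions to $E_\star$ are at most $C(g,h_\star)\,\epsl\,\|P_+^0\|_{L^2}^2$, while the two undifferentiated contributions are at most $C(g,h_\star)\,(\|u^0\|_{L^\infty}^2+\|h^0-h_\star\|_{L^\infty}^2)$ times the measure of the common support of $(h^0-h_\star,u^0)$, which is $O(L)=O(N\delta)$; hence $E_\star\to0$ as $\delta\to0$. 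In particular, for $\delta$ small enough $E_\star<\tfrac16 g\sqrt\epsl h_\star^3$, so \eqref{initial energy}, \eqref{ib:M0} and \eqref{ib:Pp} all hold, and $W^0=(\eta^0,u^0)$ has $E_\star$ (equivalently $\tilde E_\star$) arbitrarily small, as required.

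The one genuinely substantive point — the reason the lemma is not entirely automatic — is the tension between demanding $\inf P_+^0<\kappa_0$, a fixed deep negative value, and simultaneously imposing $\int P_+^0=0$ together with the fixed ceiling $\sup P_+^0\le\tfrac14 C_3$, a ceiling which is not allowed to grow with the depth of the spike. The resolution is the elementary observation that the positive mass needed to balance the spike is only $O(N\delta)$, and may be spread over an interval of length $L\propto N\delta/C_3$; this keeps the compensating bump below $\tfrac18 C_3$ and, because both $\|P_+^0\|_{L^1}$ and $\|P_+^0\|_{L^2}$ still vanish as $\delta\to0$, makes its contribution to the energy negligible. Everything else is routine verification of the listed inequalities, consistent with the bounds already collected in Step Zero and in Lemma~\ref{lem:Mbound}.
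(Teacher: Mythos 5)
The paper gives no proof of this lemma (it is explicitly omitted as routine), so there is nothing to compare against; your construction via the Riemann invariants --- taking $R_-^0$ constant so that $P_-^0\equiv 0$, and building $P_+^0$ as a deep narrow spike of fixed depth $N=1+|\kappa_0|$ balanced by a wide shallow bump of height $\tfrac18 C_3$ --- is a natural and essentially correct way to fill it in. The one substantive point you identify (reconciling a fixed deep negative minimum with the fixed ceiling $\sup P_+^0\le\tfrac14 C_3$ and zero mean) is indeed the only non-trivial issue, and your resolution is right: the constants $C_2$, $C_3$, $T_\star$, hence $\kappa_0$, depend only on $\eps$, $h_\star$, $g$, so $N$ stays fixed as $\delta\to0$ while $\|P_+^0\|_{L^1}+\|P_+^0\|_{L^2}\to0$.

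One inaccuracy should be corrected. You claim the measure of the support of $(h^0-h_\star,u^0)$ is $O(L)=O(N\delta)$. It is not: your positive bump is centered at $x=1$, a fixed distance from the spike at $x=0$, and on the gap between them $\int_{-\infty}^x P_+^0=-N\delta\neq0$, so $u^0\equiv-\tfrac12 N\delta$ and $h^0-h_\star=O(N\delta)$ there. Hence the support has measure $O(1)$, not $O(N\delta)$. The conclusion survives unchanged, because the undifferentiated contribution to $E_\star$ is then $O\bigl((\|u^0\|_{L^\infty}^2+\|h^0-h_\star\|_{L^\infty}^2)\cdot 1\bigr)=O(N^2\delta^2)\to0$; alternatively you could place the bump adjacent to the spike to make the original claim literally true. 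With that repair the proof is complete.
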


With any such initial data, it then follows further 
from Corollary~\ref{cor:blowup}, the formulas
\begin{equation}\label{e:uxhx}
u_x = \frac12(P_+ + P_-) \,, \quad
h_x = \sqrt {\frac hg} (P_+ - P_-)\,,
\end{equation}
and Lemma~\ref{lem:Mbound}, 
that $P_+$ cannot remain bounded below and must satisfy
\begin{equation}\label{b:liminfPp}
\liminf_{t\uparrow T_{\max}}  \inf_{x\in\R} P_+(x,t) = -\infty.
\end{equation}

We claim that actually \eqref{lim:Ppblowup} holds, meaning that the ``$\liminf$'' here can be replaced by ``$\lim$.''
The reason for this is that from \eqref{e:p plus} we have
that along any ``+'' characteristic,
\begin{equation}
\label{idm:Pp}
\frac{d^+}{dt} P_+ \geq  -\frac{3}{8}P_+^2 + P_+ P_- + \frac{3}{8}P_-^2 - C_2 
\geq -\frac{1}{2}P_+^2 - 3C_3^2 - C_2 \geq -\frac12 (P_+ + C_4)^2,
\end{equation}
where $\frac12 C_4^2 = 3C_3^2+C_2$. 
By consequence,  if we suppose that \eqref{lim:Ppblowup} is false, 
and instead $\inf_x P(x,t_k)\geq \kappa_1> -\infty$
for some sequence $t_k \to T_{\max}$ in $[0,T_{\max})$, 
then for $k$ so large that 
${1+\frac12\kappa_1 (T_{\max}-t_k)}\ge\frac12$, 
we find by solving the Ricatti inequality above that 
\begin{equation}\label{b:Ppbelow}
\inf_{x\in\RR} P_+(x,t) + C_4 \geq \frac{\kappa_1}
{1+\frac12\kappa_1 (t-t_k)} \ge 2\kappa_1
\quad\text{for all $t\in[t_k,T_{\max})$}.
\end{equation}
This contradicts \eqref{b:liminfPp} and proves \eqref{lim:Ppblowup}.

\textbf{Step Two}.
It remains to prove Lemma~\ref{lem:Mbound}, using a continuation argument.
Set 
\begin{equation}\label{d:T3}
T_3 \defeq \sup\{ t\in[0,T_{\max}): M(t)\leq C_3\} \,.
\end{equation}
Then for $t\in[0,T_3)$ we have the following estimates.  First, as in 
\eqref{idp:Pp} we have
\[
\frac{d^+}{dt} P_+ \leq  -\frac{3}{8}P_+^2 + P_+ P_- + \frac{3}{8}P_-^2 + C_2 
\leq 3C_3^2 + C_2\,,
\]
whence 
\begin{equation}\label{bM:supPp}
\sup_x P_+(x,t) \leq \frac14 C_3 + t(3C_3^2+C_2).
\end{equation}
Similarly, we find that along ``-'' characteristics,
\[
\frac{d^-}{dt} P_- \geq  \frac{3}{8}P_+^2 + P_+ P_- - \frac{3}{8}P_-^2 - C_2 
\geq -3C_3^2 - C_2\,,
\]
whence 
\begin{equation}\label{bM:infPm}
\inf_x P_-(x,t) \geq -\frac14 C_3 - t(3C_3^2+C_2).
\end{equation}
Finally, in a similar way we find
\[
\frac{d^-}{dt} P_- \leq  3P_+^2 + C_2,
\]
whence
\begin{equation}
\label{bM:supPm}
\sup_x P_-(x,t) \leq \frac14 C_3 + t C_2 + 
3 \sup_{\zeta\in\RR} 
\int_0^t P_+^2(X_-(\zeta,s),s)\,ds \,.
\end{equation}

The following estimate on this last integral is the key to the proof.
\begin{lemma}\label{lem:key}
There exists $\Tsstar=\Tsstar(\eps,h_\star)>0$ independent of the initial data,
such that if $T_3< T_{\max}\wedge \Tsstar$ then 
\[
 \sup_{\zeta\in\RR} 
\int_0^t P_+^2(X_-(\zeta,s),s)\,ds \leq \frac 18 C_3 
\quad\text{for all } t\in[0,T_3].
\]
\end{lemma}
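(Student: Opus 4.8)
The plan is to run a continuation argument on $[0,T_3]$, with $T_3$ as in \eqref{d:T3}, organized around the balanced quantity $N_+(\xi,t):=\frac{\partial X_+}{\partial\xi}(\xi,t)\,P_+(X_+(\xi,t),t)^2$ transported along ``$+$'' characteristics, exploiting the cancellation of the cubic terms recorded in \eqref{e:plus derivative 0}, together with a change of variables that trades time along a ``$-$'' characteristic for the ``$+$''-label being crossed. Since $T_3$ is the last time at which $M(\cdot)\le C_3$ (see \eqref{d:Mt}) and $M$ is continuous and nondecreasing, $M(t)\le C_3$ on all of $[0,T_3]$, hence $|P_-|\le C_3$ pointwise there, and all the Step Zero bounds are in force: $h_{\min}\le h\le h_{\max}$ with $h_{\min}=\frac12 h_\star$, $h_{\max}=\frac32 h_\star$ by \eqref{b:h}; $\norm{\lambda_\pm}_{L^\infty}\le 2\sqrt{gh_{\max}}$ by \eqref{b:lambdapm}; $\norm{Q}_{L^\infty}\le C_2$ by \eqref{b:Q2}; and $\norm{P_\pm(\cdot,s)}_{L^2}^2<C_1$ by \eqref{b:Ppm}. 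Using \eqref{e:plus derivative 0} together with $(\lambda_+)_x=\frac14(3P_++P_-)$ from \eqref{e:lambdax}, and the elementary bounds $|3P_-(\lambda_+)_x-2Q|\le\frac94 C_3|P_+|+\frac34 C_3^2+2C_2$ and $\frac{\partial X_+}{\partial\xi}|P_+|\le\frac12\bigl(\frac{\partial X_+}{\partial\xi}+N_+\bigr)$, one obtains along each ``$+$'' characteristic
\[
\frac{d}{dt}N_+\le A_1 N_+ + A_2\,\frac{\partial X_+}{\partial\xi},
\]
with $A_1,A_2$ depending only on $\eps$ and $h_\star$ (through the explicit constants $C_2,C_3$). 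Since $N_+(\xi,0)=P_+(\xi,0)^2$, Gronwall's inequality gives $N_+(\xi,s)\le e^{A_1 s}P_+(\xi,0)^2+A_2\int_0^s e^{A_1(s-s')}\frac{\partial X_+}{\partial\xi}(\xi,s')\,ds'$.

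Next I would carry out the change of variables. Fix a ``$-$'' characteristic $X_-(\zeta,\cdot)$ and, for each $s\in[0,t]$, let $u=\beta(s)$ be the ``$+$''-label with $X_+(\beta(s),s)=X_-(\zeta,s)$; this is well-defined as long as the solution is smooth, because then $\frac{\partial X_+}{\partial\xi}>0$ and $|X_+(\xi,s)-\xi|\le 2\sqrt{gh_{\max}}\,s$, so $X_+(\cdot,s)$ is an increasing bijection of $\R$. Differentiating the defining identity yields $\beta'(s)=(\lambda_--\lambda_+)\big/\frac{\partial X_+}{\partial\xi}=-2\sqrt{gh}\big/\frac{\partial X_+}{\partial\xi}<0$, so $\beta$ is a strictly decreasing $C^1$ bijection of $[0,t]$ onto $[\beta(t),\beta(0)]$; let $\sigma$ be its inverse. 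Substituting $u=\beta(s)$ and using $P_+(X_-(\zeta,s),s)^2=N_+(\beta(s),s)\big/\frac{\partial X_+}{\partial\xi}(\beta(s),s)$, one power of the Jacobian $\frac{\partial X_+}{\partial\xi}$ cancels and
\[
\int_0^t P_+^2(X_-(\zeta,s),s)\,ds=\int_{\beta(t)}^{\beta(0)}\frac{N_+(u,\sigma(u))}{2\sqrt{g\,h}}\,du\le\frac{1}{\sqrt{2gh_\star}}\int_{\beta(t)}^{\beta(0)}N_+(u,\sigma(u))\,du,
\]
where $h$ in the middle integrand is evaluated at $(X_-(\zeta,\sigma(u)),\sigma(u))$ and is $\ge h_{\min}=\frac12 h_\star$.

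I would then insert the Gronwall bound for $N_+$. The leading term is controlled by energy conservation, since $\int_{\beta(t)}^{\beta(0)}P_+(u,0)^2\,du\le\norm{P_+(\cdot,0)}_{L^2}^2<C_1$. For the correction term, a Fubini exchange over $\{0\le s'\le\sigma(u),\ u\in[\beta(t),\beta(0)]\}$ rewrites $\int_{\beta(t)}^{\beta(0)}\int_0^{\sigma(u)}\frac{\partial X_+}{\partial\xi}(u,s')\,ds'\,du$ as $\int_0^t\bigl(\int_{\beta(t)}^{\beta(s')}\frac{\partial X_+}{\partial\xi}(u,s')\,du\bigr)\,ds'$, whose inner integral equals $X_+(\beta(s'),s')-X_+(\beta(t),s')=X_-(\zeta,s')-X_+(\beta(t),s')$, and this is $O(t-s')$ because every characteristic speed is $\le 2\sqrt{gh_{\max}}$ and $X_-(\zeta,t)=X_+(\beta(t),t)$. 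Hence the correction contributes at most $C_5\Tsstar^2$ with $C_5=C_5(\eps,h_\star)$, so on $[0,T_3]$
\[
\int_0^t P_+^2(X_-(\zeta,s),s)\,ds\le\frac{e^{A_1\Tsstar}C_1}{\sqrt{2gh_\star}}+C_5\Tsstar^2.
\]
Since $C_3=16C_1/\sqrt{2gh_\star}$ by \eqref{d:C123}, the target $\frac18 C_3$ equals $2C_1/\sqrt{2gh_\star}$, so choosing $\Tsstar=\Tsstar(\eps,h_\star)>0$ small enough that $e^{A_1\Tsstar}\le\frac32$ and $C_5\Tsstar^2\le\frac12\,C_1/\sqrt{2gh_\star}$ finishes the estimate, uniformly in the initial data.

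The step I expect to be the main obstacle — and the reason $N_+$ and the change of variables are the right devices — is the possible degeneracy of $\frac{\partial X_+}{\partial\xi}$, which may (and near the forming singularity must) tend to $0$: thus $P_+^2$ is genuinely unbounded pointwise along a ``$-$'' characteristic, and one must never divide by $\frac{\partial X_+}{\partial\xi}$ without compensation. The construction avoids this because $\int_\R N_+(u,s)\,du=\norm{P_+(\cdot,s)}_{L^2}^2$ is kept bounded by energy conservation, while the change of variables absorbs the surviving power of $\frac{\partial X_+}{\partial\xi}$; what remains is the Fubini bookkeeping for the correction term and tracking constants carefully so that the factor of two built into the definition of $C_3$ absorbs both the Gronwall growth $e^{A_1\Tsstar}$ and the $O(\Tsstar^2)$ remainder.
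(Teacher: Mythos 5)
Your proposal is correct and follows essentially the same route as the paper's proof: the balanced quantity $\frac{\partial X_+}{\partial \xi}P_+^2$ with the cubic cancellation \eqref{e:plus derivative 0}, the pullback change of variables from time along a ``$-$'' characteristic to the ``$+$'' label with Jacobian $2\sqrt{gh}\big/\frac{\partial X_+}{\partial\xi}$, the Fubini bound on the remainder via the speed bound \eqref{b:lambdapm}, and the $L^2$ control of $P_+(\cdot,0)$ through $C_1$ and the definition of $C_3$. The only (harmless) difference is in the bookkeeping: you close the estimate with a pointwise Gronwall inequality for $N_+$ along each ``$+$'' characteristic, whereas the paper closes it with a self-referential absorption bound on the double integral $\aaa$ of the absolute derivative, using the auxiliary inequality $\frac1s\int_0^s f\le f(0)+\int_0^s|f'|$.
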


Taking this result for granted for the moment, we complete
the proof of Lemma~\ref{lem:Mbound}. 
\begin{proof}[Proof of Lemma~\ref{lem:Mbound}] 
We choose $T_\star>0$ such that 
\begin{equation}
T_\star \leq \Tsstar \quad\text{and}\quad
2T_\star (3C_3^2+C_2) < \frac18 C_3.
\end{equation}
We claim that then $T_3 \geq T_{\max}\wedge T_\star$.
Indeed, if not, then by combining the result of Lemma~\ref{lem:key} with the estimates
in \eqref{bM:supPm}, \eqref{bM:infPm} and \eqref{bM:supPp}, we infer that
\begin{equation}
M(t) \leq \frac78 C_3+ 2t(3C_3^2+C_2) < C_3 \quad 
\text{ for all  $t\in [0,T_3]$.}
\end{equation}
But then by continuity, $M(t)<C_3$ on a larger time interval,
contradicting the definition of $T_3$ in \eqref{d:T3}. 
\end{proof}

\textbf{Step Three}.
Now it remains only to prove Lemma~\ref{lem:key}.

\begin{proof}[Proof of Lemma~\ref{lem:key}]
We first note that due to \eqref{b:h}
the difference between characteristic speeds at the same point satisfies
\begin{equation}\label{e:lamdiff}
\lambda_+ - \lambda_- = 2\sqrt{gh} \ \ \in [ \sqrt{2gh_\star}, \sqrt{6gh_\star}  ] \,.
\end{equation}
Now, suppose $T_3<T_{\max}$, and fix any $\zeta_0\in\RR$. For each $t\in[0,T_3]$, due to 
\eqref{e:lamdiff} there is a unique
$\xi=\xi_0(t)\leq\zeta_0$ such that the ``+'' characteristic starting from
$\xi$ and the ``-'' characteristic starting from $\zeta_0$ intersect 
at time $t$, i.e.,
\begin{equation} \label{e:Xpm meet}
X_+(\xi_0(t),t) = X_-(\zeta_0,t).
\end{equation}
(See Fig.\ref{fig:triangle} for a sketch of the situation.)
\begin{figure}\label{fig:triangle}
\includegraphics[trim={0 6cm 0 3cm},clip,height=2in]{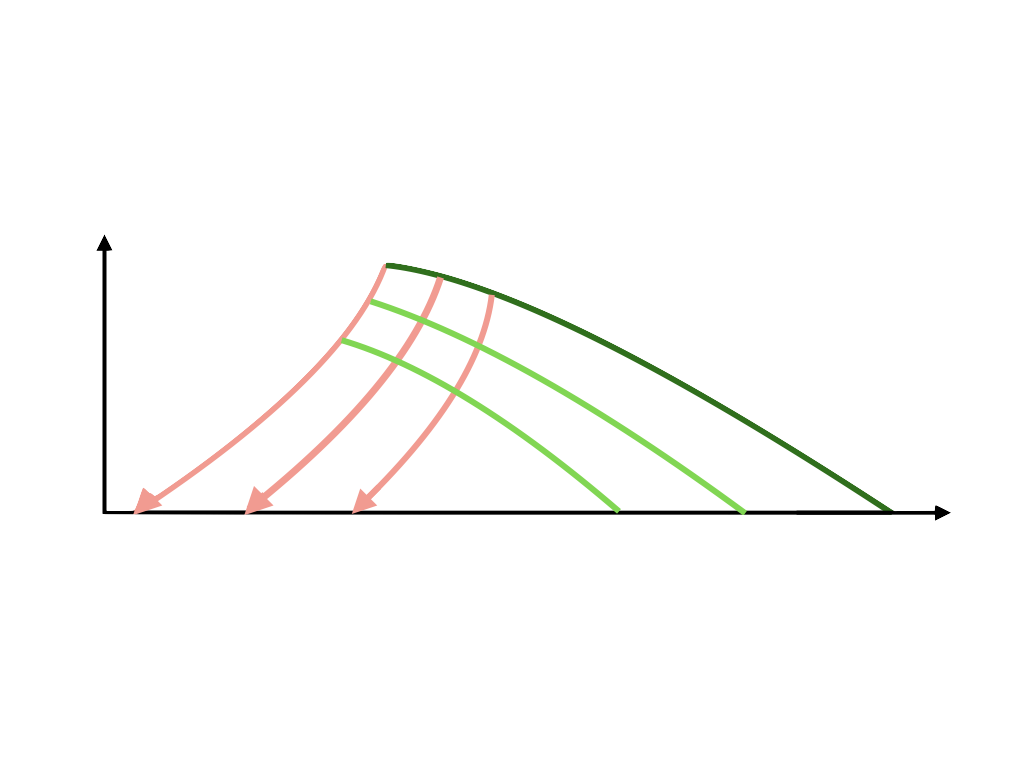}
\put(-260,10){\large $\xi_0(t)$}
\put(-40,10){\large $\zeta_0$}
\put(-270,88){\large $t$}
\put(-183,91){\large $\bullet$} %\mydot }
\caption{Pullback from $-$ characteristics along $+$ characteristics.
``$\bullet$'' marks the point $(x,t)$ where $x=X_+(\xi_0(t),t) = X_-(\zeta_0,t)$.}
\end{figure}
Note that due to \eqref{def: flow map} and the bound on characteristic speeds in
\eqref{b:lambdapm} we can say that
\begin{align}
\zeta_0-\xi_0(t) &\leq |\zeta_0-X_-(\zeta_0,t)|+|X_+(\xi_0(t),t)-\xi_0(t)|
\nonumber \\ 
&\leq (\|\lambda_+\|_{L^\infty}
+\|\lambda_-\|_{L^\infty})t \leq 4\sqrt{g h_{\max}} t\,.
\label{b:zetaxi}
\end{align}
Differentiating \eqref{e:Xpm meet} in $t$, we find
\[
 \frac{\partial X_+}{\partial\xi}(\xi_0(t), t) \frac{d\xi_0}{dt} + 
\lambda_+(X_+(\xi_0(t), t), t) = 
\lambda_-(X_-(\zeta_0,t), t) 
\]
Due to \eqref{e:Xpm meet} and \eqref{e:lamdiff} it follows
\[
- \frac{\partial X_+}{\partial\xi}(\xi_0(t), t) \frac{d\xi_0}{dt} 
= \lambda_+-\lambda_- = 2\sqrt{gh}.
\]
Now by changing variables $s=s_0(\xi)$ using the inverse function $s_0=\xi_0\inv$,
we get,
\blue{writing $(P_+^2\circ X_+)(\xi,\tau)=P_+(X_+(\xi,\tau),\tau)^2$,}
\begin{align}
&\int_0^t P_+^2(X_-(\zeta_0, s), s)\,ds    \nonumber\\
& \quad =   \int_{\xi_0(t)}^{\zeta_0} \paren{ \frac{P_+^2}{2\sqrt{gh}}  }(X_+(\xi, s_0(\xi)), s_0(\xi)) 
        \frac{\partial X_+}{\partial \xi}(\xi, s_0(\xi)) \,d\xi
        \nonumber\\
&\quad \leq  \frac{1}{\sqrt{2g h_\star}} \int_{\xi_0(t)}^{\zeta_0} P_+^2(X_+(\xi, s_0(\xi)), s_0(\xi))
        \frac{\partial X_+}{\partial \xi}(\xi, s_0(\xi)) \,d\xi   \nonumber\\
&\quad =  \frac{1}{\sqrt{2g h_\star}}  \int_{\xi_0(t)}^{\zeta_0} 
        \paren{ P_+^2(\xi, 0) + \int_0^{s_0(\xi)} 
	\blue{\frac{d}{d\tau}
	\paren{P_+^2\circ X_+\frac{\partial X_+}{\partial \xi}}
        (\xi,\tau)}
	\,d\tau  }\,d\xi \nonumber\\
&\quad \leq  \frac{1}{\sqrt{2g h_\star}} \norm{P_+(\cdot, 0)}_{L^2}^2 
        + \frac{1}{\sqrt{2g h_\star}} \int_{\xi_0(t)}^{\zeta_0} \int_0^{s_0(\xi)} 
        \blue{\frac{d}{d\tau}\paren{P_+^2\circ X_+\frac{\partial X_+}{\partial \xi}}
	} \,d\tau\,d\xi.   \nonumber
\\&\quad \leq  \frac1{16} C_3 + \frac1{\sqrt{2 g h_\star}} \aaa
\label{b:Pp2int}
\end{align}
where
\begin{equation}
\aaa \defeq \int_{\xi_0(t)}^{\zeta_0} \int_0^{s_0(\xi)}
	\blue{\frac{d}{d\tau}
	\paren{P_+^2\circ X_+\frac{\partial X_+}{\partial \xi}}
        (\xi,\tau)}
\,d\tau\,d\xi.
\label{b:A1}
\end{equation}
To get the fourth line in \eqref{b:Pp2int} we use the fundamental theorem of calculus 
along the ``+'' characteristic starting from $(\xi,0)$, and to get the last line we use
\eqref{b:Ppm} and the definition of $C_3$ from \eqref{d:C123}. 

Now, by the crucial derivative computation \eqref{e:plus derivative 0},
since $(\lambda_+)_x =\frac14( 3P_+ + P_-)$ and $|P_-| \leq M(t) \leq C_3$, 
we can bound the the integrand of $\aaa$ by a quadratic polynomial in $P_+$ times 
$\frac{\partial X_+}{\partial \xi}$, in which the linear term can be bounded 
by the quadratic term and the constant term:
\begin{align*}
&
	\blue{\frac{d}{d\tau}
	\paren{P_+^2\circ X_+\frac{\partial X_+}{\partial \xi}}
        (\xi,\tau)}
\leq
\abs{\frac{\partial X_+}{\partial \xi}P_+\paren{3(\lambda_+)_x P_- - 2Q  }}   \nonumber \\
&\quad\leq  \frac{\partial X_+}{\partial \xi}\paren{ \frac{3}{4}\abs{P_+ P_-}\,\abs{3 P_+ + P_-} + 2\abs{P_+}C_2   }      
\nonumber\\ &\quad 
\leq  \frac{\partial X_+}{\partial \xi}  \paren{ 3 C_3P_+^2 + C_3^3 + C_2^2  } \,.
\nonumber 
\end{align*}
Due to the bound \eqref{b:zetaxi},
by using Fubini's theorem we obtain
\begin{align*}
&\int_{\xi_0(t)}^{\zeta_0} \int_0^{s_0(\xi)} \frac{\partial X_+}{\partial \xi}(\xi,\tau) \,d\tau\,d\xi 
 = \int_0^t (\xi_0(\tau)-\xi_0(t))\,d\tau 
\\ &  \qquad
\leq (\zeta_0 - \xi_0(t))t \leq 4\sqrt{gh_{\max}} t^2 < 5\sqrt{gh_\star}t^2.
\end{align*}
To bound the integral of $\frac{\D X_+}{\D \xi}P_+^2$, one uses the inequality
\begin{equation}
\frac{1}{s} \int_0^s f(\tau) \,d\tau \leq f(0) + \int_0^s \abs{f'(\tau)} \,d\tau\quad \forall \,f\in C^1(\RR)
\end{equation}
to obtain
\begin{align}
&\int_{\xi_0}^{\zeta_0} \int_0^{s_0(\xi)} \frac{\partial X_+}{\partial \xi}P_+^2 \,d\tau\,d\xi  \nonumber\\
&\quad \leq  \int_{\xi_0}^{\zeta_0} s_0(\xi) \paren{  P_+^2(\xi, 0) +
         \int_0^{s_0(\xi)}    \abs{
	\blue{\frac{d}{d\tau}
	\paren{P_+^2\circ X_+\frac{\partial X_+}{\partial \xi}}
        (\xi,\tau)}
	 } \,d\tau     } \,d\xi
         \nonumber\\
&\quad \leq  t( \norm{P_+(\cdot, 0)}_{L^2}^2 + \aaa ) \leq t(C_1+\aaa)\,.   
\end{align}
Putting these bounds into \eqref{b:A1}, one obtains
\begin{equation*}
\aaa \leq 3C_3t\paren{ C_1 + \aaa }  + (C_3^3 + C_2^2) 5\sqrt{gh_\star}  t^2 \,.
\end{equation*}

We now choose $\Tsstar>0$ to be so small that
\begin{equation}
3C_3 \Tsstar<\frac12 \quad\text{and}\quad
5(C_3^3+C_2^2)\Tsstar < \frac12  \,.
\end{equation}
Then if $T_3<\Tsstar$, it follows that for all $t\in[0,T_3]$,
\[
\aaa \leq 6 C_3 C_1 t + \sqrt{g h_*}t \,.
\]
Further restricting $\Tsstar$ to be so small that
\[
\frac1{\sqrt{2gh_\star}}\paren{6C_3 C_1+\sqrt{gh_\star}}\Tsstar \leq \frac1{16}C_3,
\]
we can conclude from \eqref{b:Pp2int} that for all $t\in[0,T_3]$,
\begin{equation}
\int_0^t P_+^2(X_-(\zeta_0, s), s)\,ds \leq 
\frac1{16}C_3 +  \frac1{\sqrt{2gh_\star}}\aaa \leq \frac18 C_3.
\end{equation}
This finishes the proof of Lemma~\ref{lem:key}. 
\end{proof}
With this, the proof of Theorem~\ref{thm:blowup} is complete.
\end{proof}

\section{Asymptotic blow-up profile}  \label{s:asym profile}
We recall that for the classical shallow water equations ($\epsl=0$),
the system \eqref{e:r plus}--\eqref{e:r minus}
admits simple wave solutions with $R_-\equiv 0$
and $R_+$ satisfying an inviscid Burgers equation. 
Namely, \eqref{e:r plus} with $\epsl=0$ yields
\begin{equation}
\label{e:Rpzero}
(R_+)_t + \lambda_+ (R_+)_x = 0, \qquad \lambda_+ = \frac34 R_+.
\end{equation}
As is well known (and briefly discussed below) smooth solutions
of this equation with $(R_+)_x<0$ somewhere must break down in finite time,
and typically develop a profile 
with a cube-root singularity at the blow-up point, with
\begin{equation}\label{e:Burgers blowup}
R_+\sim a_0-b_0(x-x_0)^{1/3}.  
\end{equation}
Then after blow-up, 
the singularity changes type
as a shock discontinuity develops.

For the rSV system with $\epsl>0$, 
the coefficients of the quadratic terms in the Ricatti-type system 
\eqref{e:p plus}--\eqref{e:p minus} differ from their values in
the classical system \eqref{e1:Pp}--\eqref{e1:Pm} with $\epsl=0$,
due to an $\epsl$-independent contribution of the local part of 
the nonlocal term.
As we discuss heuristically in this section,
this difference appears to change the nature of the 
typical solution profile at the time of blow-up.
For the blowing-up solutions from section~\ref{s:blowup} above,
we will argue that one should expect that 
the profile near a blow-up point should typically have 
a $\frac35$-root singularity instead:
\begin{equation}\label{e:rSV blowup}
R_+\sim a_\epsl-b_\epsl(x-x_0)^{3/5}.  
\end{equation}
What happens after the blow-up time is not known,
but we may conjecture that solutions develop
$\frac23$-root singularities, like
the weakly singular traveling waves described 
in \cite{pu2018weakly}.

\subsection{Blow-up profile for the rSV equations}
Let us describe heuristically why we may expect the blow-up profile in \eqref{e:rSV blowup}.
Suppose we start close to the blow-up time, taking $P_0(\xi)$ to be initial data for $P_+$ like
that described in the proof of theorem \ref{thm:blowup}, with a large negative minimum at $\xi=0$, say.
In the vicinity of $\xi=0$ we then typically expect quadratic behavior near the minimum, with
\begin{equation}
P_0(\xi) \approx P_0(0) + c_0\xi^2 \ .
\end{equation}
Since $Q$ and $P_-$ are bounded before $P_+$ blows up, we assume
\begin{equation}
\abs{Q}+\abs{P_-} \ll \abs{P_+} \,,
\end{equation}
and neglect these terms, rewriting \eqref{e:p plus} and \eqref{e:plus char.} as
\begin{eqnarray}
\frac{d^+}{dt}P_+ &=& \frac{d}{d t}\left( P_+(X_+(\xi, t), t) \right) 
=  -\frac{3}{8}P_+^2\,,  \label{e:p plus asym.}
\\ 
\paren{\frac{\partial X_+}{\partial \xi} }_t &=& \frac{3}{4}P_+\frac{\partial X_+}{\partial \xi}\,.   \label{e:plus char. asym.}
\end{eqnarray}
With the initial data $P_0$ we solve \eqref{e:p plus asym.} along the ``+'' characteristic curves
to get
\begin{equation}
P_+(X_+(\xi, t), t) = \frac{P_0(\xi)}{1 + \frac{3}{8}tP_0(\xi)}.   \label{e:p plus sol}
\end{equation}
Following the ``+'' characteristic curve emitting from the global minimum point 0 of $P_0$
we expect blow-up to happen first at $\xi=0$ at the time 
$ T = \paren{\frac38|P_0(0)|}\inv \ll 1$, and we find
\begin{equation}
\label{e:PpT}
P_+(X_+(\xi, T), T) \approx -\frac{c_1}{\xi^2}\,,
\qquad c_1 = 
\frac{|P_0(0)|}{\frac38 T c_0}\,.
\end{equation}
 
 Now, from \eqref{e:plus char. asym.} and \eqref{e:p plus asym.} one can compute that
 \begin{equation}
 \frac{d^+}{dt}\paren{\frac{\partial X_+}{\partial \xi}P_+^2} 
 		= 0.
 \label{e:choice of power}
 \end{equation}
(This balancing effect agrees with the rigorous computation \eqref{e:plus derivative 0}.)
Integrating this equation along characteristics up to the blow-up time $T$, we find 
for $\xi$ close to $0$ that 
\begin{equation} \label{e:XxiT}
\frac{\partial X_+}{\partial \xi}(\xi, T) =\frac{P_0^2(\xi)}{P_+^2(X_+(\xi,T), T)} 
= \paren{1 + \frac{3}{8}TP_0(\xi)}^2 = \paren{ \frac38 T c_0}^2 \xi^4.
\end{equation}
Integrating in $\xi$ we get 
\[
X_+(\xi,T)-X_+(0,T) = c_2\xi^5\,. 
\]
Solving for $\xi$ and using this in \eqref{e:PpT} we find, for $x$ near 
$x_0 = X_+(0,T)$,
\[
P_+(x,T) \approx -c_3 (x-x_0)^{-2/5} \,.
\]
Integrating in $x$ now yields \eqref{e:rSV blowup}.

We remark that these heuristics lead us to expect 
that $P_+(\cdot,T)$ belongs to $L^p(\RR)$ for $p<\frac52$.
However, if we repeat the calculations with $P_0$ having a 
degenerate minimum $\sim P_0+c_0\xi^{2n}$ for arbitrary $n\in\N$ we find
that in general $P_+(\cdot,T)$ need not remain in $L^p$ for any $p>2$.

\subsection{Comparison with the inviscid Burgers equation}
We briefly indicate how the calculations above differ with 
the situation when $\epsl=0$.  In this case, the characteristic speed $u=\lambda_+$
in \eqref{e:Rpzero} satisfies the inviscid Burgers equation
\begin{equation}
u_t + uu_x = 0 \,.
\end{equation}
From Burgers equation $v \defeq u_x= \frac34(R_+)_x$ satisfies
$v_t + uv_x = - v^2$,
which implies that along characteristic curves 
$X(\xi, \cdot)$ with $\frac{\partial X}{\partial t}(\xi, t) = u(X(\xi, t), t)$ we have,
analogous to \eqref{e:plus char. asym.} and \eqref{e:p plus asym.},
\[
\frac{d}{dt}\big(v(X(\xi, t), t)\big) = -v^2 \,,
\qquad \paren{\frac{\partial X}{\partial \xi}}_t =  v \frac{\partial X}{\partial \xi}(\xi, t)      \,.
\]
Then similar to \eqref{e:PpT} it follows that 
$v(X(\xi,t),t)\approx -c/\xi^2$ at the time of blow-up.

Differing from \eqref{e:choice of power}, however, we have instead
 \begin{equation}
 \frac{d}{d t}\paren{\frac{\partial X}{\partial \xi}v} = 0 \,,
 \end{equation}
where $v$ appears here and not $v^2$. Now instead of \eqref{e:XxiT}
one finds $\frac{\D X}{\D\xi}\sim c \xi^2$ and $X-x_0\sim c \xi^3$.
From this one deduces that at blow-up,
\begin{equation}
 v(x,t) \sim -c(x-x_0)^{-2/3}, 
\end{equation}
whence \eqref{e:Burgers blowup} follows.

\section*{Acknowledgments} 
The authors are grateful to an anonymous referee for 
indicating how to fix a serious mistake in the original proof 
of the blow-up criterion. 
RLP thanks Robin Ming Chen for discussions regarding the Green-Naghdi equations.
This material is based upon work supported by the National
Science Foundation under grants 
DMS 1514826 and 1812573 (JGL) and
DMS 1515400 and 1812609 (RLP),
partially supported by the Simons Foundation under grant 395796, %to RP.
by the Center for Nonlinear Analysis (CNA)
under National Science Foundation PIRE Grant no.\ OISE-0967140,
and by the NSF Research Network Grant no.\ RNMS11-07444 (KI-Net).

% code to print all entries in bib file:
%\nocite{*}

\bibliographystyle{siam} %habbrv}
\bibliography{main2-shallowwater}

\begin{thebibliography}{10}

\bibitem{ASLannes2008}
{\sc B.~Alvarez-Samaniego and D.~Lannes}, {\em A {N}ash-{M}oser theorem for
  singular evolution equations. {A}pplication to the {S}erre and
  {G}reen-{N}aghdi equations}, Indiana Univ. Math. J., 57 (2008), pp.~97--131.

\bibitem{bressan2007conserved}
{\sc A.~Bressan and A.~Constantin}, {\em Global conservative solutions of the
  {C}amassa-{H}olm equation}, Arch. Ration. Mech. Anal., 183 (2007),
  pp.~215--239.

\bibitem{bressan2007dissipate}
\leavevmode\vrule height 2pt depth -1.6pt width 23pt, {\em Global dissipative
  solutions of the {C}amassa-{H}olm equation}, Anal. Appl. (Singap.), 5 (2007),
  pp.~1--27.

\bibitem{CLZ2006}
{\sc M.~Chen, S.-Q. Liu, and Y.~J. Zhang}, {\em {A two-component generalization
  of the Camassa-Holm equation and its solutions}}, Lett. Math. Phys., {75}
  ({2006}), pp.~{1--15}.

\bibitem{ClDu18}
{\sc D.~Clamond and D.~Dutykh}, {\em Non-dispersive conservative regularisation
  of nonlinear shallow water (and isentropic {E}uler equations)}, Commun.
  Nonlinear Sci. Numer. Simul., 55 (2018), pp.~237--247.

\bibitem{CI2008}
{\sc A.~Constantin and R.~I. Ivanov}, {\em On an integrable two-component
  {C}amassa-{H}olm shallow water system}, Phys. Lett. A, 372 (2008),
  pp.~7129--7132.

\bibitem{DHM2018}
{\sc D.~Dutykh, M.~Hoefer, and D.~Mitsotakis}, {\em {Solitary wave solutions
  and their interactions for fully nonlinear water waves with surface tension
  in the generalized Serre equations}}, {Theor. Comput. Fluid Dyn.}, {32}
  ({2018}), pp.~{371--397}.

\bibitem{GN74}
{\sc A.~E. Green, N.~Laws, and P.~M. Naghdi}, {\em On the theory of water
  waves}, Proc. Roy. Soc. (London) Ser. A, 338 (1974), pp.~43--55.

\bibitem{green1976derivation}
{\sc A.~E. Green and P.~M. Naghdi}, {\em A derivation of equations for wave
  propagation in water of variable depth}, Journal of Fluid Mechanics, 78
  (1976), pp.~237--246.

\bibitem{GHR2012}
{\sc K.~Grunert, H.~Holden, and X.~Raynaud}, {\em Global solutions for the
  two-component {C}amassa-{H}olm system}, Comm. Partial Differential Equations,
  37 (2012), pp.~2245--2271.

\bibitem{GuanYin2010}
{\sc C.~Guan and Z.~Yin}, {\em Global existence and blow-up phenomena for an
  integrable two-component {C}amassa-{H}olm shallow water system}, J.
  Differential Equations, 248 (2010), pp.~2003--2014.

\bibitem{GuiLiu2010}
{\sc G.~Gui and Y.~Liu}, {\em On the global existence and wave-breaking
  criteria for the two-component {C}amassa-{H}olm system}, J. Funct. Anal., 258
  (2010), pp.~4251--4278.

\bibitem{HoNaTr2009}
{\sc D.~D. Holm, L.~\'O~N\'araigh, and C.~Tronci}, {\em Singular solutions of a
  modified two-component {C}amassa-{H}olm equation}, Phys. Rev. E (3), 79
  (2009), pp.~016601, 13.

\bibitem{IonescuKruse2013}
{\sc D.~Ionescu-Kruse}, {\em Variational derivation of two-component
  {C}amassa-{H}olm shallow water system}, Appl. Anal., 92 (2013),
  pp.~1241--1253.

\bibitem{IonescuKruse2015}
\leavevmode\vrule height 2pt depth -1.6pt width 23pt, {\em A new two-component
  system modelling shallow-water waves}, Quart. Appl. Math., 73 (2015),
  pp.~331--346.

\bibitem{israwi2011large}
{\sc S.~Israwi}, {\em Large time existence for 1{D} {G}reen-{N}aghdi
  equations}, Nonlinear Anal., 74 (2011), pp.~81--93.

\bibitem{Ivanov2006}
{\sc R.~I. Ivanov}, {\em Extended {C}amassa-{H}olm hierarchy and conserved
  quantities}, Zeitschrift f{\"u}r Naturforschung A, 61 (2006), pp.~133--138.

\bibitem{KatoPonce1988}
{\sc T.~Kato and G.~Ponce}, {\em Commutator estimates and the {E}uler and
  {N}avier-{S}tokes equations}, Comm. Pure Appl. Math., 41 (1988),
  pp.~891--907.

\bibitem{Kuz2007}
{\sc P.~A. Kuz'min}, {\em On two-component generalizations of the
  {C}amassa-{H}olm equation}, Mat. Zametki, 81 (2007), pp.~149--152.

\bibitem{lannes2013water}
{\sc D.~Lannes}, {\em The water waves problem}, vol.~188 of Mathematical
  Surveys and Monographs, American Mathematical Society, Providence, RI, 2013.
\newblock Mathematical analysis and asymptotics.

\bibitem{li2006shallow}
{\sc Y.~A. Li}, {\em A shallow-water approximation to the full water wave
  problem}, Comm. Pure Appl. Math., 59 (2006), pp.~1225--1285.

\bibitem{majda1984}
{\sc A.~Majda}, {\em Compressible fluid flow and systems of conservation laws
  in several space variables}, vol.~53 of Applied Mathematical Sciences,
  Springer-Verlag, New York, 1984.

\bibitem{MDAZ2017}
{\sc D.~Mitsotakis, D.~Dutykh, A.~Assylbekuly, and D.~Zhakebayev}, {\em {On
  weakly singular and fully nonlinear travelling shallow capillary-gravity
  waves in the critical regime}}, {Phys. Lett. A}, {381} ({2017}),
  pp.~{1719--1726}.

\bibitem{pu_thesis}
{\sc Y.~Pu}, {\em Weakly Singular Waves and Blow-up for a Regularization of the
  Shallow-water System}, PhD thesis, Carnegie Mellon University, 2018.

\bibitem{pu2018weakly}
{\sc Y.~Pu, R.~Pego, D.~Dutykh, and D.~Clamond}, {\em Weakly singular shock
  profiles for a non-dispersive regularization of shallow-water equations},
  Comm. Math. Sci.,  (2018).
\newblock to appear.

\bibitem{serre1953b}
{\sc F.~Serre}, {\em Contribution {\`a} l'{\'e}tude des {\'e}coulements
  permanents et variables dans les canaux}, La Houille Blanche,  (1953),
  pp.~830--872.

\end{thebibliography}

\end{document}